\newcommand{\er}{equivalence relation\xspace}
\newcommand{\ers}{equivalence relations\xspace}
\newcommand{\NCA}{noncomputably avoiding\xspace}
\providecommand{\notleftright}{\mathrel{\ooalign{$\Leftrightarrow$\cr\hidewidth$/$\hidewidth}}}
\providecommand{\biota}{\bm{\iota}}
\providecommand{\I}{\mathcal{I}}
\providecommand{\bI}{\bm{\I}}
\providecommand{\F}{\mathcal{F}}
\providecommand{\bF}{\bm{\F}}
\providecommand{\leqc}{\leq_c}
\providecommand{\nleqc}{\nleq_c}
\providecommand{\equivc}{\equiv_c}
\providecommand{\nequivc}{\not\equiv_c}
\newcommand{\evens}{\text{Evens}}
\newcommand{\odds}{\text{Odds}}
\providecommand{\set}[1]{\lbrace #1 \rbrace}
\providecommand{\abs}[1]{\lvert#1\rvert}
\providecommand{\dom}{{\rm{dom}}}
\DeclareMathOperator{\Th}{Th}
\DeclareMathOperator{\ran}{\range}
\DeclareMathOperator{\orb}{orb}
\DeclareMathOperator{\Id}{Id}
\DeclareMathOperator{\bId}{\mathbf{Id}}
\DeclareMathOperator{\ER}{\mathbf{ER}}
\DeclareMathOperator{\col}{Col}
\newtheorem{question}{Question}
\newtheorem{convention}{Convention}
\newtheorem{thm}{Theorem}[section]
\newtheorem{definition}[thm]{Definition}
\newenvironment{rem}{\begin{remark} \rm}{ \end{remark}}
\newtheorem{remark}[thm]{Remark}
\newtheorem{lemma}[thm]{Lemma}
\newtheorem{obs}[thm]{Observation}
\newtheorem{corollary}[thm]{Corollary}
\newtheorem{cory}[thm]{Corollary}
\newtheorem{claim}[thm]{Claim}
\newenvironment{defn}{\begin{definition} \rm}{ \end{definition}}
\newtheorem{lem}[thm]{Lemma}
\renewcommand{\phi}{\varphi}
\renewcommand{\setminus}{\smallsetminus}
\DeclareMathOperator{\Dark}{Dark}
\DeclareMathOperator{\Light}{Light}
\DeclareMathOperator{\bDark}{\mathbf{Dark}}
\DeclareMathOperator{\bLight}{\mathbf{Light}}
\DeclareMathOperator{\range}{range}
\DeclareMathOperator{\graph}{graph}
\DeclareMathOperator{\Ceers}{\mathbf{Ceers}}
\newcommand{\rel}[1]{\mathrel{#1}}
\title[Computable reducibility on equivalence relations on $\mathbb{N}$]{On the structure of computable reducibility on equivalence relations of natural numbers}
\author[U.~Andrews]{Uri Andrews}
\address{Department of Mathematics\\
University of Wisconsin\\
Madison, WI 53706-1388\\
USA}
\email{\href{mailto:andrews@math.wisc.edu}{andrews@math.wisc.edu}}
\author[D.~Belin]{Daniel F. Belin}
\address{Department of Mathematics\\
University of Wisconsin\\
Madison, WI 53706-1388\\
USA}
\email{\href{mailto:dbelin@wisc.edu}{dbelin@wisc.edu}}
\author[L.~San Mauro]{Luca San Mauro}
\address{Department of  Mathematics ``Guido Castelnuovo''\\
Sapienza University of Rome}
\email{\href{mailto:luca.sanmauro@uniroma1.it}{luca.sanmauro@uniroma1.it}}
\thanks{Andrews was partially supported by NSF grant DMS-1600228. San Mauro was partially supported by the Austrian Science Fund, project M~2461.}
\subjclass[2000]{03D55}
\keywords{Computable reducibility, countable equivalence relations, computably enumerable equivalece relations, second-order arithmetic}
\begin{document}

\maketitle

\begin{abstract}
	We examine the degree structure $\ER$ of equivalence relations on $\omega$ under computable reducibility. We examine when pairs of degrees have a {least upper bound}. In particular, we show that sufficiently incomparable pairs of degrees do not have a least upper bound but that some incomparable degrees do, and we characterize the degrees which have a least upper bound with every finite equivalence relation. We show that the natural classes of finite, light, and dark degrees are definable in $\ER$. We show that every equivalence relation has continuum many self-full strong minimal covers, and that $\mathbf{d}\oplus \mathbf{\bId_1}$ needn't be a strong minimal cover of a self-full degree $\mathbf{d}$.
	Finally, we show that the theory of the degree structure $\ER$ as well as the theories of the substructures of light degrees and of dark degrees are each computably isomorphic with second order arithmetic.
\end{abstract}

\section{Introduction}
The study of the complexity of equivalence relations has been a major thread of research in diverse areas of logic. The most popular way  for evaluating this complexity is by defining a suitable reducibility. A reduction of an equivalence relation
$R$ on a domain $X$ to an equivalence relation $S$ on a domain $Y$
is a (nice) function $f: X\rightarrow Y$
such that 
\[
x\rel{R} y \Leftrightarrow f(x)\rel{S}f(y).
\]
That is, $f$ pushes down to an injective map on the quotient sets  $X_{R}\mapsto Y_{S}$. It is natural to impose a bound on the complexity of the reduction $f$, as  otherwise, if the size of {$X_R$} is not larger than the size of $X_S$, then the Axiom of Choice alone would guarantee the existence of a reduction from $R$ to $S$, 
thus we would not be able to
distinguish equivalence relations with the same number of equivalence classes.
In the literature, there are two main definitions for this reducibility, designed to deal, respectively, with the uncountable and the countable case:

\begin{itemize}
\item In descriptive set theory,  \emph{Borel reducibility} $(\leq_B)$ is defined by assuming that $X$ and $Y$ are Polish spaces and $f$ is Borel;
\item In computability theory, \emph{computable reducibility} $(\leq_c)$ is defined by assuming that $X=Y$ coincide with the set $\omega$ of natural numbers and $f$ is computable. 
\end{itemize}

The theory of Borel equivalence relations (as surveyed in, e.g, \cite{gao2008invariant,hjorth2010borel})  is a central field of modern descriptive set theory and it  shows deep connections with topology, group theory, combinatorics, model theory and ergodic theory---to name a few.
% see, e.g., \cite{friedman1989borel} where the notion has been introduced;  \cite{gao2008invariant,hjorth2010borel} where the notion is surveyed; the papers \cite{LR-05, FM-11} for particular examples of equivalence relations which are complete; \cite{HKL, CK} for results about the structure of Borel reducibility. The theory of Borel equivalence relations  is now a central and rapidly expanding field of modern descriptive set theory, that shows deep connections with topology, group theory, combinatorics, model theory and ergodic theory---to name a few.

Research on computable reducibility dates back to the work of Ershov~\cite{Ershov:77,Ershov:survey}  {and} the theory of numberings. It concentrates on two main focuses: first, to calculate the complexity of natural equivalence relations on $\omega$, proving, e.g., that provable equivalence in Peano Arithmetic is $\Sigma^0_1$-complete~\cite{Bernardi:83},  Turing equivalence on c.e.\ sets is $\Sigma^0_4$-complete~\cite{IMNN-14}, and the isomorphism relations on several familiar classes of computable structures (e.g., trees, torsion abelian groups, fields of characteristic $0$ or $p$) are $\Sigma^1_1$-complete~\cite{FFH-12}; secondly, 
to understand the structure of the collection of equivalence relations of a certain complexity $\Gamma$ (e.g., lying at some level of the arithmetical~\cite{coskey2012hierarchy}, analytical~\cite{bazhenov2020minimal}, or Ershov hierarchy~\cite{bazhenov2020classifying,ng2019degree}).

Regarding the latter focus, computably enumerable equivalence relations---known by the acronym \emph{ceers}~\cite{gao2001computably}, or called \emph{positive} equivalence relations in the Russian literature---received special attention. Historically, the emphasis was on combinatorial classes of \emph{universal ceers}, i.e., ceers to which all other ceers computably reduce (see, e.g., \cite{andrews2014universal,andrews2017survey}). But recently, there has been a growing interest in pursuing a systematic study of \textbf{Ceers}, the poset of degrees of ceers, whose structure turns out to be extremely rich. Andrews, Schweber, and Sorbi~\cite{andrews2020theory} proved that the first-order theory of \textbf{Ceers} is as complicated as true arithmetic (see also \cite{andrews2019joins} for a structural analysis of  \textbf{Ceers} focused on joins, meets, and definability).

In this paper, we focus rather on $\ER$, the poset of degrees of \emph{all} equivalence relations with domain $\omega$. Our interest in $\ER$ is twofold. 

On the one hand, we want to explore to what extent techniques coming from  the theory of ceers can be applied to equivalence relations of arbitrary complexity. Some proofs will move smoothly from $\mathbf{Ceers}$ to $\ER$ (proving that the underlying results are independent from the way in which the equivalence relations are presented), but the analogy between the two structures often breaks down (see, e.g., Theorem \ref{thm:plusId1NotAStrongCover}), or   new ideas will be required to recast analogous results from the setting of ceers (see, e.g., Theorem \ref{definability of Id}).

On the other hand, we regard $\ER$ as a natural structure, interesting and worth studying \emph{per se}. After all, $\ER$ is to $\Ceers$ as, e.g., the global structure of all Turing degrees ($\mathcal{D}_T$) is to the local structure of c.e.\ degrees ($\mathcal{R}_T$)---and we consider it only a historical anomaly that, for equivalence relations, the local structure has been analyzed in great detail with no parallel investigation of the global structure.

We add a final piece of motivation. Dealing with a seemingly distant problem (i.e., Martin's conjecture), Bard \cite{bard2020uniform} recently proved that $\mathcal{D}_T$ is Borel reducible to  $\ER$. This may be regarded as  evidence that $\ER$ is complex. In this paper, we push this analysis further by fully characterizing the complexity  of the theory of $\ER$ (Theorem \ref{ER equivalent to 2order arithmetic}).

The rest of this paper is organized as follows. In the remainder of this section, we offer a number of preliminaries to make the paper  self-contained. In Section~\ref{sec:definability}, we focus on first-order definability of some natural fragments of $\ER$ and analyze when least upper bounds exist. In Section~\ref{sec:covers}, we study minimal and strongly minimal covers of equivalence relations, and we also introduce generic covers. Through this study,  we exhibit many disanalogies between $\ER$ and $\Ceers$. 
Finally, in Section~\ref{sec:arithmetic}, we show that the first-order theory of $\mathbf{ER}$ (and in fact, that of two natural fragments of $\mathbf{ER}$) is as {complex} as possible, being computably isomorphic to second-order arithmetic.

% the study of word problems for groups~\cite{nies2016,miller1971group}; the theory of numberings~\cite{Ershov:77,Ershov:survey}; the metamathematics of the arithmetics~\cite{Visser:80,Montagna:82,Bernardi:83}; computable structure theory~\cite{Fokina:12b}---and so forth. 

%The goal of this paper is to study the poset $(\mathbf{ER}, \leq_c)$ of the $c$-degrees of the equivalence relations with domain $\omega$.

%As usual, degrees will be denoted in boldface.

%\texttt{I will  expanded it later by introducing local structures and the global structure and cite more papers~\cite{andrews2020theory,bard2019uniform}}

\smallskip

Our computability theoretic terminology and notation is standard, and as in \cite{soare2016turing}.

%\luca{As suggested by the first reviewer, I made the following global changes:}
%\begin{enumerate}
%\item \lucainsert{I  replaced all the ``:'' occurring in the definitions of functions or reductions with the LaTeX command} \verb=\colon=
%\item \lucainsert{I replaced $\mu_f$ (and similar) with $f^\star$.}
%\item \lucainsert{I changed most instances of ``computably reduces to'' with ``is computably reducible to''}.
%\end{enumerate}

\subsection{Preliminary material} 
Throughout this subsection we assume that $R$ and $S$ are equivalence relations.  The $R$-equivalence class of a natural number $x$ is denoted by
$[x]_R$. For a set $A\subseteq \omega$, the \emph{$R$-saturation} of $A$ (i.e., $\bigcup_{x\in A}[x]_R$) is denoted by $[A]_R$. We denote the collection of all $R$-equivalence classes
 by $\omega_R$. 
 If $f$ is a computable function witnessing that $R\leqc S$, then we write $f\colon R\leqc S$. If $f\colon R \leqc S$, then  $f^\star$ is the injective mapping from $\omega_R$ to $\omega_S$ induced by $f$. In our proofs, it will sometimes be useful to consider the \emph{orbit} of a number or of an equivalence class along all  iterations of a given reduction: for $x\in\omega$ and $X\in\omega_R$, denote by $\orb_f(x)$ the set $\{f^{(i)}(x) : i>0 \}\subseteq \omega$ and by $\orb_f(X)$ the set $\{{f^\star}^{(i)}(X) : i>0 \}\subseteq \omega_R$.   The following lemma, which is immediate to prove, will be used many times in the paper, often implicitly.

\begin{lemma}\label{f respects m-degrees}
Let $f \colon  R \leqc S$. For all $X\in\omega_R$, $X \leq_m f^\star(X)$ so also $X\leq_m S$.
\end{lemma}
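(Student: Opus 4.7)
The plan is a direct unwinding of the definition of computable reducibility. Let $X \in \omega_R$ and fix any representative $x_0 \in X$; by definition $f^\star(X) = [f(x_0)]_S$. I would simply take the computable function $f$ itself as the $m$-reduction witnessing $X \leq_m f^\star(X)$: for an arbitrary $y \in \omega$, the chain
\[
y \in X \iff y \rel{R} x_0 \iff f(y) \rel{S} f(x_0) \iff f(y) \in f^\star(X)
\]
uses only the reducibility property $x \rel{R} y \Leftrightarrow f(x) \rel{S} f(y)$ that comes with $f\colon R \leqc S$. Since $f$ is computable, this gives $X \leq_m f^\star(X)$.

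For the second half of the statement, the next step is to observe that each $S$-equivalence class is itself $m$-reducible to $S$ in the natural sense. Fixing any $z \in f^\star(X)$, the computable map $y \mapsto \langle y, z \rangle$ satisfies $y \in f^\star(X) \iff \langle y, z\rangle \in S$, so $f^\star(X) \leq_m S$. Composing $m$-reductions then yields $X \leq_m S$.

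There is essentially no obstacle here; the content of the lemma is that $f$ transports membership in $X$ to membership in $f^\star(X)$. The only minor point worth flagging in the write-up is the shift of type between the first and second reductions: $X$ and $f^\star(X)$ are subsets of $\omega$, whereas $S$ is a subset of $\omega \times \omega$, so one has to pin down a representative $z$ to convert the class $f^\star(X)$ into a literal $m$-reduction to $S$. Since no uniformity in $X$ is claimed, the choice of $z$ (and of $x_0$) is harmless.
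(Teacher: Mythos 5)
Your proof is correct, and since the paper explicitly omits the proof (calling the lemma ``immediate to prove''), there is nothing to compare against: your argument is precisely the standard unwinding one expects. The chain $y \in X \iff y \rel{R} x_0 \iff f(y) \rel{S} f(x_0) \iff f(y) \in f^\star(X)$ gives $X \leq_m f^\star(X)$ via $f$, and the map $y \mapsto \langle y, z\rangle$ for a fixed $z \in f^\star(X)$ gives $f^\star(X) \leq_m S$; your remark about the type shift (classes live in $\omega$, while $S$ lives in $\omega\times\omega$, so one must pin a representative) is exactly the right point to flag.
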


\begin{defn}
	For any nonempty c.e.\ set $W$ and \er $R$, we let $R\restriction W$ be the \er given by $x \rel{R\restriction W} y$ if and only if $h(x)\rel{R} h(y)$, where $h\colon \omega\rightarrow W$ is any computable surjection (note that up to $\equivc$, the definition does not depend on the choice of surjection $h$). 
\end{defn}

\begin{rem}\label{rmk:restrictions}
	For any nonempty c.e.\ set $W$ and \er $R$, observe that $h$ (as in the definition) gives a reduction of $R\restriction W$ to $R$, which we call the inclusion map. 
	Also, if $f\colon  X\leqc Y$, then $X \equivc Y\restriction \ran(f)$.
\end{rem}

If $f\colon  R\leqc S$ and $\range(f) \cap X \neq \emptyset$ for some $X\in \omega_S$, then we say that $f$ \emph{hits} $X$; otherwise, we say that $f$ \emph{avoids} $X$. 
We say that $R$ is \emph{self-full} if every reduction of  $R$ to itself hits all elements of $\omega_R$. The notion of self-fullness plays a prominent role in the theory of ceers (see, e.g.,\cite{andrews2019joins,andrews2020self,andrews2020theory}). To name just a couple of examples: the degrees of self-full ceers are definable in \textbf{Ceers}, as they coincide  with the nonuniversal  degrees which are meet-irreducible; moreover, the existence of self-full strong minimal covers is fundamental to prove  that the first-order theory of the degrees of light ceers is computably isomorphic to true arithmetic.

\smallskip

 By the notation $f\oplus g$, we denote the  following function,
 \[
 f\oplus g(x)=\begin{cases}
f(x) &\text{if $x$ is even},\\
g(x) &\text{if $x$ is odd}. 
 \end{cases}
 \]

The \emph{uniform join}\footnote{To avoid potential ambiguities between the terms ``uniform join'' and ``join'', we use the term ``least upper bound'' to refer to a join of degrees in the poset \textbf{ER}.} $R\oplus S$ is the equivalence relation that encodes $R$ on the evens and $S$ on the odds, i.e.,
$x\rel{ R\oplus S} y$ if and only if either $x=2u, y=2v$, and  $u\rel{R}v$; or $x=2u+1$, $y=2v+1$, and $u\rel{S}v$.
For the sake of exposition, we often say \emph{$R$-classes} (respectively, \emph{$S$-classes}) for the equivalence classes of $R\oplus S$ consisting of even (odd) numbers. The operation $\oplus$ is clearly associative, up to $\equiv_c$, 
%\lucaout{on degrees}\uri{It's not associative on equivalence relations -- in $(X\oplus Y) \oplus Z$, $Z$ is coded on the whole odds, whereas in $X\oplus (Y\oplus Z)$, $Z$ is encoded on the numbers congruent to $3$ mod $4$.}, 
so we will generally be lax and write expressions such as $R_0\oplus\ldots \oplus R_n$.

The following easy lemma 
was stated for ceers in \cite[Fact~2.3]{andrews2020theory} but goes through for arbitrary equivalence
relations with exactly the same proof.

\begin{lemma}\label{lem:below a join}
	If $X\leqc R\oplus S$, then there are $R_0\leqc R$ and $S_0\leqc S$ such that $X\equivc R_0 \oplus S_0$.
\end{lemma}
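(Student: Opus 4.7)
The plan is to use $f\colon X\leqc R\oplus S$ to split $\omega$ into two computable pieces according to whether $f(x)$ lies in the even ($R$-) side or the odd ($S$-) side of $R\oplus S$, and then recover $X$ as the uniform join of its restrictions to these two pieces.

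In detail, let $E=\{x : f(x)\in 2\omega\}$ and $O=\{x : f(x)\in 2\omega+1\}$. Both are computable, partition $\omega$, and are closed under $X$, since $X$-equivalent elements must have $(R\oplus S)$-equivalent images of matching parity. Assuming both $E$ and $O$ are nonempty, fix computable surjections $h_E\colon\omega\to E$ and $h_O\colon\omega\to O$ and define $R_0 := X\restriction E$ and $S_0 := X\restriction O$ using them.

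I would then check four reductions, each by a direct unwinding of the definitions of $\restriction$ and $\oplus$: (i) $R_0\leqc R$ via $n\mapsto f(h_E(n))/2$, since $n\rel{R_0}m$ iff $h_E(n)\rel{X}h_E(m)$ iff $f(h_E(n))\rel{R\oplus S}f(h_E(m))$, and both images are even; (ii) $S_0\leqc S$ via $n\mapsto (f(h_O(n))-1)/2$; (iii) $R_0\oplus S_0\leqc X$ via $2n\mapsto h_E(n),\ 2n+1\mapsto h_O(n)$, where the cross-parity case is handled because $E$ and $O$ are disjoint and $X$-closed; and (iv) $X\leqc R_0\oplus S_0$ via the procedure ``on input $x$, compute $f(x)$; if it is even output $2n$ for some $n$ with $h_E(n)=x$, otherwise output $2n+1$ for some $n$ with $h_O(n)=x$''. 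Together (i) and (ii) provide the required $R_0\leqc R$ and $S_0\leqc S$, while (iii) and (iv) give $X\equivc R_0\oplus S_0$.

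The main technical point is reduction (iv): the search for $n$ is guaranteed to terminate because the parity of $f(x)$ tells us in advance which of $E, O$ contains $x$, and $h_E, h_O$ are surjective onto these sets; preservation of $X$-equivalence then follows from the definition of the restrictions. A secondary obstacle is the degenerate case in which $E$ or $O$ is empty, where $f$ already witnesses $X\leqc R$ (respectively, $X\leqc S$) outright, and one pads the vacant side with a trivial one-class relation so that $R_0\oplus S_0$ still has the degree of $X$. Otherwise the argument mirrors the ceers version from \cite[Fact~2.3]{andrews2020theory} verbatim, confirming the author's claim that the same proof goes through.
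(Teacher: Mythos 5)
Your main argument is essentially the same as the paper's: you split $\omega$ (the domain) into $E=\{x:f(x)\text{ even}\}$ and $O=\{x:f(x)\text{ odd}\}$ and restrict $X$ to each side, while the paper splits the range $W=\range(f)$ into $V_1=\{x:2x\in W\}$ and $V_2=\{x:2x+1\in W\}$ and restricts $R$ and $S$; these give the same degrees, since $x\mapsto f(x)/2$ (for $x\in E$) witnesses $X\restriction E\equivc R\restriction V_1$, and likewise on the odd side. Your verification of reductions (i)--(iv) in the non-degenerate case is correct.

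However, your handling of the degenerate case is wrong, and in fact reveals that the lemma as literally stated fails there. You claim one can ``pad the vacant side with a trivial one-class relation so that $R_0\oplus S_0$ still has the degree of $X$,'' i.e.\ take $R_0=\Id_1$ and $S_0\equivc X$ when $E=\emptyset$. But $\Id_1\oplus X>_c X$ whenever $X$ is self-full: any reduction $\Id_1\oplus X\leqc X$ would, by restricting to the $X$-side, produce a self-reduction of $X$ avoiding the image of the $\Id_1$-class, contradicting self-fullness. For a concrete counterexample, take $R=\Id_1$, $S$ a dark minimal ceer (self-full by Lemma~\ref{dark implies self-full}), and $X=S$ with $f(x)=2x+1$; then every $R_0\leqc\Id_1$ is $\equivc\Id_1$, and every $S_0\leqc S$ is either finite or $\equivc S$, so $R_0\oplus S_0$ is either finite or $\equivc\Id_1\oplus S>_c S$, never $\equivc X$. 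The paper's own one-line proof silently has the same issue (if $V_1=\emptyset$ then $R\restriction V_1$ is undefined, as $\restriction$ requires a nonempty set), and the lemma is applied elsewhere with the understanding that in the degenerate case the correct conclusion is simply $X\leqc R$ or $X\leqc S$. So the right fix is not to pad but to state (or silently allow) the extra disjunct ``or $X\leqc R$ or $X\leqc S$'' in the conclusion.
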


\begin{proof}
	Let $f\colon  X\leqc R\oplus S$ and denote $\range(f)$ by $W$. Then 
	\[
	X\equivc R\oplus S \restriction W \equivc R\restriction V_1 \oplus S \restriction V_2,
	\]
	where $V_1:=\{x:2x\in W\}$ and $V_2:=\{x: 2x+1\in W\}$.
\end{proof}

%It may also be convenient to uniformly join an infinite collection of \ers. The standard way to do so is by using all columns of $\omega$. But in what follows we will need to choose the coding columns accordingly to some external information we wish to encode. So, here is a general definition:
%\begin{defn}
%Given a countable collection of \ers $(R_i)_{i\in\omega}$ and a set of numbers $A:=\{a_0,a_1,\ldots\}$, we define $\oplus_i R_i[A]$ to be the ceer so that
%\[
%\langle j,x\rangle\rel{\oplus_i R_i[A]}\langle k, y\rangle \Leftrightarrow (\exists i)(j=k=a_i \mbox{ $\&$ } xR_iy).
%\]
%\end{defn}

If $A\subseteq \omega\times\omega$, then $R_{/A}$ is the equivalence relation generated by the set of pairs $R\cup A$. We say that $R_{/A}$ is a \emph{quotient} of $R$, and a quotient is \emph{proper} if $R_{/A}\neq R$. To improve readability, we often omit braces, e.g., writing $R_{(x,y)}$ instead of $R_{\{(x,y)\}}$.
Of particular interest for this paper will be  
 quotients of uniform joins.
  A quotient  ${R\oplus S}_{/A}$ is \emph{pure}  if it does not  collapse distinct $R$-classes, or distinct $S$-classes, i.e.,
\[
{R\oplus S}_{/A} \restriction \rel{\evens } \, = {R\oplus S} \restriction \rel{\evens} \mbox{ and } {R\oplus S}_{/A} \restriction \rel{\odds}\, = {R\oplus S} \restriction \rel{\odds}.
\]
The quotient $R\oplus S_{/A}$ is a \emph{total} quotient if every odd number is equivalent to an even number and vice versa.

\begin{lemma}\label{obs:pure quotients are ubs}
Every pure quotient of $R\oplus S$ is an upper bound of $R$ and $S$. 
\end{lemma}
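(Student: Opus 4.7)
The plan is to exhibit the two obvious reductions $f(x)=2x$ witnessing $R \leq_c (R\oplus S)_{/A}$ and $g(x)=2x+1$ witnessing $S \leq_c (R\oplus S)_{/A}$, and then use purity of the quotient to verify that each is a reduction.

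First I would handle $R$. Setting $Q := (R\oplus S)_{/A}$, if $x \rel R y$, then by definition of the uniform join $2x \rel{R\oplus S} 2y$, and since $Q$ extends $R\oplus S$, we immediately obtain $2x \rel Q 2y$. Conversely, if $2x \rel Q 2y$, then $2x$ and $2y$ are both even and are $Q$-equivalent; so $(2x, 2y) \in Q \restriction \evens$. By the definition of purity, $Q \restriction \evens = (R\oplus S) \restriction \evens$, so $2x \rel{R\oplus S} 2y$, which by the definition of uniform join yields $x \rel R y$. Thus $f$ is a reduction. The argument for $g$ is entirely symmetric, using $Q \restriction \odds = (R\oplus S) \restriction \odds$ instead.

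Since $f$ and $g$ are computable, this establishes both $R \leq_c Q$ and $S \leq_c Q$, showing that $Q$ is an upper bound of $R$ and $S$. There is no real obstacle here; the content of the lemma is essentially the definition of purity, which was tailored precisely to preserve the structure of $R$ on the evens and of $S$ on the odds under the quotienting, and the inclusions $x\mapsto 2x$ and $x\mapsto 2x+1$ are the canonical reductions already present for $R\oplus S$ itself.
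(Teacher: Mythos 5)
Your proposal is correct and uses exactly the same reductions $x\mapsto 2x$ and $x\mapsto 2x+1$ as the paper's proof; the paper simply states that these are ``immediate to observe'' to be reductions, while you spell out the verification via the purity condition.
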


\begin{proof}
Assume that $R\oplus S_{/A}$ is pure. It is immediate to observe that $R$ {is computably reducible}  to $R\oplus S/_{A}$ via the function $x \mapsto 2x$ and $S$ {is computably reducible} to $R\oplus S/_{A}$ via the function $x \mapsto 2x+1$.
\end{proof}

\begin{lem}\label{lem:reducing to total pure quotients and finitely in odds}
	Let $R\oplus S_{/A}$ be a total quotient of $R\oplus S$. Suppose that $f\colon  X\leqc R\oplus S_{/A}$ and $\ran(f)\cap \odds$ is finite.
%	, or that $S$ is a ceer and $\ran(f)\cap \odds$ hits only finitely many $S$-classes. 
Then $X\leqc R$.
\end{lem}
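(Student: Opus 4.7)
The plan is to convert $f$ into a reduction $f_1\colon X\leqc R\oplus S_{/A}$ whose range lies entirely in $\evens$, and then to set $g(x)=f_1(x)/2$ as the desired reduction into $R$. The first step will use totality together with the finiteness of $\ran(f)\cap\odds$; the second will use that, restricted to $\evens$, the quotient $R\oplus S_{/A}$ coincides with $R\oplus S$ (the purity condition recorded in the lemma's label).

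Concretely, list $\ran(f)\cap\odds=\{o_1,\ldots,o_k\}$. By totality, each $o_i$ is $R\oplus S_{/A}$-equivalent to some even $e_i$; since these are only finitely many, the values $e_1,\ldots,e_k$ can simply be hardcoded into the program. Setting
\[
f_1(x)=\begin{cases} f(x) & \text{if $f(x)$ is even},\\ e_i & \text{if $f(x)=o_i$,}\end{cases}
\]
produces a computable function with $\ran(f_1)\subseteq\evens$ and $f_1(x)\rel{R\oplus S_{/A}} f(x)$ for every $x$, so in particular $f_1\colon X\leqc R\oplus S_{/A}$. Defining $g(x)=f_1(x)/2$, the verification that $g\colon X\leqc R$ will then run
\[
x\rel{X} y \iff f_1(x)\rel{R\oplus S_{/A}} f_1(y)\iff f_1(x)\rel{R\oplus S} f_1(y)\iff g(x)\rel{R} g(y),
\]
where the first equivalence is because $f_1$ reduces $X$ to $R\oplus S_{/A}$, the second uses purity together with $f_1(x),f_1(y)\in\evens$ (so $R\oplus S_{/A}\restriction\evens = R\oplus S\restriction\evens$), and the third is just the definition of $\oplus$ on $\evens$.

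The crux is the middle equivalence, which is precisely where the hypothesis on the quotient really enters. Without purity, distinct $R$-classes could be amalgamated in the quotient by a chain in $A$ that detours through odd elements, and then $f_1(x),f_1(y)$ might be $R\oplus S_{/A}$-equivalent yet sit in different $R$-classes, in which case the naive ``divide by $2$'' fails the ``$\Rightarrow$'' direction. Step~1, by contrast, is routine once one notices that the finiteness of $\ran(f)\cap\odds$ converts what would otherwise be a noncomputable matching of odds with equivalent evens into a finite lookup table baked into the algorithm.
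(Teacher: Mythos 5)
Your construction is exactly the paper's: use totality to replace the finitely many odd values of $f$ by hardcoded equivalent evens, then divide by two. You are also right to flag that the middle biconditional in your verification needs $A$ to not collapse distinct $R$-classes, i.e.\ $R\oplus S_{/A}\restriction\evens = R\oplus S\restriction\evens$; this follows from purity, which the lemma's label advertises but its printed statement does not, and without some such hypothesis the lemma is simply false (let $A$ collapse two $R$-classes neither of which $m$-reduces to any remaining $R$-class, take $f$ to be the inclusion of the evens; then $\ran(f)\cap\odds=\emptyset$ yet $X\nleqc R$). The paper's one-line verification leans on the same unstated assumption. For what it is worth, in the paper's only application of this lemma --- to generic covers $R\oplus\Id_{/\graph(f)}$ with $f\colon\odds\to\evens$ --- the collapsing pairs all go odd-to-even, so no two even classes are ever merged and the needed half of purity holds automatically.
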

\begin{proof}
	For each $x\in \ran(f)\cap \odds$, fix an even number $x'$ so that $x\rel{R\oplus S_{/A}} x'$.
%	Similarly, if $S$ is a ceer and $\ran(f)$ intersects only finitely many $S$-classes, we can fix $(a_i)_{i\leq k}$ and $(b_i)_{i\leq k}$ so that the $a_i$'s represent each $S$-class in the range of $f$ and $b_i$ is even and equivalent to $a_i$. Then for any odd number $x\in \cup_{i\leq k}[a_i]_S$, let $x'=b_i$.
%	
	Let
	\[
	h(x) = 
	\begin{cases} f(x) &\text{if {$f(x)$} is even}\\
		f(x)' &\text{if {$f(x)$} is odd}\\
	\end{cases}
	\]
	and observe that $h$ is a reduction of $X$ to $R\oplus S_{/A}$ with range contained in the evens, so {$x\mapsto \frac{h(x)}{2}$} 
	%\lucaout{$\frac{h}{2}$} 
	is a reduction of $X$ to $R$.
\end{proof}

Let us now fix notation for some natural families of equivalence relations of natural numbers. They will serve as benchmark relations for our structural analysis of $\ER$. Some terminology naturally generalizes from the theory of ceers (see, e.g., \cite{andrews2019joins}).

\begin{itemize}
\item
Define $\Id_n$ by $x\mathrel{\Id_n} y$ if $x\equiv y \mod n$. 
Define $\Id=\Id_\omega$ by $x\Id y$ if $x=y$. For convenience in inductive arguments, we also consider $\Id_0$ to be the empty relation. We define $\mathcal{I}$ to be the family of equivalence relations that are equivalent to some $\Id_n$ for $1\leq n\in \omega$.
\item An equivalence relation $R$ is \emph{finite}, if $R$ has finitely many equivalence classes\footnote{This terminology, which is standard in the theory of ceers, differs from usage in descriptive set theory, where finite equivalence relations are those with
all equivalence classes being finite. In \cite{gao2001computably}, ceers with all  equivalence classes being finite are called $FC$ (standing for \emph{finite classes}).}. Otherwise $R$ is \emph{infinite}. $\mathcal{F}$ and $\mathcal{F}_n$ denote respectively the family of all finite equivalence relations and the family of equivalence relations with exactly $n$ equivalence classes.  Observe that each element of $\F_2$ naturally encodes a set and its complement: $E(X)\in \F_2$ denotes the equivalence relation consisting of exactly two classes, $X$ and $\overline{X}$.
\item An equivalence relation $R$ is \emph{light} if $\Id\leqc R$. It is easy to see that the light equivalence relations are exactly the infinite equivalence relations which have a computable \emph{transversal}, i.e., a computable sequence $\set{x_i}_{i\in\omega}$ of pairwise nonequivalent numbers;
\item An equivalence relation $R$ is \emph{dark} if $R$ is infinite and $\Id\not\leqc R$. 
\item For each of these {families}, the boldface version represents the collection of $\ER$-degrees containing members of the class. For example, $\bF$ is the set of degrees of finite equivalence relations, $\bDark$ is the set of degrees of dark equivalence relations, etc.
\end{itemize}

As is clear from the above, $\ER$ is partitioned into $\bF$, $\bLight$, and $\bDark$. Moreover, $\bI\subseteq \bF$. Inside $\ER$, computable equivalence relations can be readily characterized.

\begin{obs}[\cite{gao2001computably}, Prop.~3.3 and 3.4]\label{ceers initial}
	The degrees of computable equivalence relations form an initial segment of $\mathbf{ER}$ of order type $\omega+1$, and are exactly $\bI\cup\{\bId\}$.
\end{obs}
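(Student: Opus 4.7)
The plan is to prove the three pieces in sequence: every computable equivalence relation is $\equivc$-equivalent to some $\Id_n$ with $1\leq n\leq\omega$; these are the only degrees below $\bId$; and the $\Id_n$'s form a strictly increasing $\omega+1$-chain.

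First I would show that any computable \er $R$ is equivalent to $\Id_n$ where $n\in\{1,2,\dots,\omega\}$ is the number of $R$-classes. Since $R$ is computable, we can enumerate a transversal computably: define $x_0=0$, and at stage $k+1$ search (computably, using the oracle for $R$) for the least $y$ with $y\not\rel{R}x_i$ for every $i\leq k$. If no such $y$ exists, terminate; otherwise set $x_{k+1}=y$. If this process terminates with exactly $n$ values, then $R\in\F_n$ and the map $x\mapsto i$, where $i$ is the unique index with $x\rel{R}x_i$, gives $R\leqc \Id_n$; conversely $i\mapsto x_i$ yields $\Id_n\leqc R$. If the process never terminates, we get a computable transversal $\{x_i\}_{i\in\omega}$ witnessing $R\equivc \Id$ by the same maps.

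Next I would verify the initial segment property. Suppose $R$ is computable and $f\colon X\leqc R$. Then $x\rel{X}y$ iff $f(x)\rel{R}f(y)$, and since both $f$ and $R$ are computable, so is $X$. By the first step, $X\equivc \Id_n$ for some $1\leq n\leq\omega$. So every degree below $\bId$ lies in $\bI\cup\{\bId\}$, and conversely all members of $\bI\cup\{\bId\}$ are computable.

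Finally I would establish the order type. The map $f(x)=x\bmod n$ reduces $\Id_n$ to $\Id_{n+1}$ (and to $\Id$): indeed $f(x)=f(y)$ iff $x\equiv y\pmod n$, and since $\range(f)\subseteq\{0,\dots,n-1\}$, we also have $f(x)\equiv f(y)\pmod{n+1}$ iff $f(x)=f(y)$. For the strictness, by Lemma~\ref{f respects m-degrees} (or more elementarily by counting) any reduction $\Id_m\leqc \Id_n$ induces an injection of $m$-many classes into $n$-many classes, which is impossible when $m>n$; and $\Id\leqc \Id_n$ would similarly require injecting $\omega$ classes into $n$. Combined with the first two steps this shows the degrees of computable \ers form an initial segment realized exactly by the strictly increasing chain $\bId_1<\bId_2<\cdots<\bId$ of order type $\omega+1$.

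No step presents real difficulty; the only point requiring some care is the computable enumeration of the transversal in the first step, together with noting that the termination-versus-nontermination dichotomy correctly distinguishes the finite-class case from the case $R\equivc\Id$.
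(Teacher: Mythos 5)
Your proof is correct and takes essentially the same approach as the paper: exhibit, for each computable $R$, a computable transversal and use it to show $R\equivc\Id_{|\omega_R|}$; then note the initial-segment property and the obvious chain $\bId_1<\bId_2<\cdots<\bId$. The one cosmetic difference is that the paper avoids the mild awkwardness of your ``search for the least new $y$; if none exists, terminate'' step (which, as you acknowledge, is not itself a decidable halting condition, although this does not break computability of the resulting reductions) by instead taking the manifestly computable set $S=\{x:\min[x]_R=x\}$ and listing it in increasing order; this yields the same transversal without needing to reason about when the search fails. You are also a bit more explicit than the paper about the downward-closure step (if $f\colon X\leqc R$ with $R$ computable then $X$ is computable), which the paper leaves implicit.
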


\begin{proof}
First, note that
\[
\Id_1 < \ldots \Id_n < \Id_{n+1} < \ldots \Id.
\]
So, the family $\mathcal{I}\cup \set{{\Id}}$ of equivalence relations has order type $\omega +1$. 

Let $R$ be a computable equivalence relation. Then the set 
\[
S:=\{x : \min [x]_R=x\}
\]
is computable. Let $S=\{c_0<c_1<c_2\ldots \}$. Then the function which sends each $[c_i]_R$ to $i$ is a computable function giving a reduction of $R$ to $\Id_{\abs{\omega_R}}$ (letting $\Id_\omega=\Id$). Further, this function is onto the classes of $\Id_{\abs{\omega_R}}$ and the inverse function on classes is also computable, so $R\equivc \Id_{\abs{\omega_R}}$.
%
%Secondly, observe that every computable equivalence relation $R$ is reducible to $\Id$ via the following function
%\begin{itemize}
%\item  $f(0)=0$,
%\item $f(x+1)= \begin{cases} f(y) &\text{if $(\exists y \leq x)(y\rel{R} x+1),$}\\
%\min z \notin [\range(f)\restriction x]_R &\text{otherwise}.
%\end{cases}
%$
%\end{itemize}
% Moreover, if $R$ has $n$ equivalence classes, then $R\equiv \Id_n$. 
%Hence, the computable equivalence relations coincide with $\mathcal{I}\cup \set{\textbf{Id}}$. 
\end{proof}

 The following is an easy, but useful fact about taking a uniform join with $\Id_1$, and how it essentially ``cancels out'' collapsing a computable class with another class.

\begin{lem}\label{Collapsing a computable class is subtracing id1}
	If $E$ is an \er with a computable class $C$, and $B$ is any other $E$-class, then $E_{/(\min C, \min B)}\oplus \Id_1 \equivc E$.
\end{lem}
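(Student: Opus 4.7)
The plan is to exhibit reductions in both directions explicitly, exploiting the fact that $C$ is computable to perform case splits based on membership in $C$. The picture is that $E$ has classes $C$, $B$, and ``others'', while $E_{/(\min C,\min B)}\oplus \Id_1$ has the merged class $C\cup B$ on the even side, all of $E$'s other classes on the even side, and a single $\Id_1$ class on the odd side. Thus the two equivalence relations have the same family of classes, with the odd $\Id_1$ class playing the role that $C$ plays in $E$, and the merged class playing the role of $B$. The two reductions will simply swap these roles.

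For $E\leqc E_{/(\min C,\min B)}\oplus \Id_1$, I would define
\[
g(x) = \begin{cases} 1 & \text{if } x\in C,\\ 2x & \text{if } x\notin C.\end{cases}
\]
This is computable because $C$ is computable. All of $C$ collapses onto the odd $\Id_1$ class; elements of $B$ map into the even merged class $[B]_{E_{/(\min C,\min B)}} = C\cup B$; and any other $E$-class $[x]_E$ maps bijectively (on classes) to $[x]_{E_{/(\min C,\min B)}}$ via $x\mapsto 2x$, which is unchanged by the quotient since only $B$ and $C$ are collapsed. Non-equivalences are preserved: classes on opposite parities cannot be identified by $\oplus$, and distinct non-$C$ classes of $E$ remain distinct in the quotient unless one is $B$ and the other is $C$, which the case split already handles.

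For the reverse direction $E_{/(\min C,\min B)}\oplus \Id_1 \leqc E$, I would define
\[
f(n) = \begin{cases} \min C & \text{if } n \text{ is odd},\\ \min B & \text{if } n = 2x \text{ and } x \in C,\\ x & \text{if } n = 2x \text{ and } x\notin C.\end{cases}
\]
Again, computability of $C$ makes $f$ computable. The odd $\Id_1$ class maps to $C$; the merged even class $C\cup B$ maps to $B$ (since any element of $C$ is sent to $\min B\in B$ and any element of $B$ is not in $C$ so is sent to itself); and every other even class $[x]_{E_{/(\min C,\min B)}}$ with $x\notin C\cup B$ maps to $[x]_E$. These target classes $C$, $B$, and the ``others'' are pairwise distinct in $E$, so $f$ is injective on classes.

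I do not anticipate any real obstacle here; the only content is organizing the two case splits so that the roles of the computable class $C$ and the added $\Id_1$ class are interchanged correctly. The computability hypothesis on $C$ is used in exactly one essential place in each direction, namely to decide the branch ``$x\in C$?'' in the definitions of $g$ and $f$.
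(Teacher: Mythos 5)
Your proof is correct and takes essentially the same approach as the paper: the reduction $g$ is literally the same, and your $f$ is the paper's reduction (send $C$ to $\min B$, identity elsewhere, and send the $\Id_1$-class to $C$) written out explicitly on even/odd coordinates. Both directions match, so there is nothing to flag.
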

\begin{proof}
	To show $E_{/(\min C, \min B)}\oplus \Id_1 \leqc E$, let $f\colon  E_{/(\min C, \min B)}\leqc E$ be defined by sending every element of $C$ to $\min B$ and be the identity on $\overline C$. Then notice that the class of $C$ is avoided by $f$. This lets us extend $f$ to a reduction of $E_{/(\min C, \min B)}\oplus \Id_1 \leqc E$ by sending the $\Id_1$-class to the class $C$ in $E$.
	The function $g(x)=2x$ for every $x\notin C$ and $g(x)=1$ for $x\in C$ gives a reduction $g:E\leqc E_{/(\min C, \min B)}\oplus \Id_1$.
\end{proof}

Note that $\Ceers$, $\bF$, and $\bigcup_{i\leq n} \bF_i$ for each $n$ are each initial segments of $\mathbf{ER}$. An obvious elementary difference between $\Ceers$ and $\ER$ is that the former degree structure is bounded and the latter is not.

\begin{obs}\label{obs:upward density}
$\ER$ has a least element, but no maximal element.
\end{obs}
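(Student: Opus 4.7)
The plan is to exhibit the least element explicitly, then rule out a maximum via a cardinality argument combined with the uniform join operation.

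First, I would show $\Id_1$ lies at the bottom of $\ER$. For any equivalence relation $R$ and any $c \in \omega$, the constant function $f(x) = c$ witnesses $\Id_1 \leqc R$: since $\Id_1$ is the total relation on $\omega$ and $c \rel{R} c$ by reflexivity, the biconditional $x \rel{\Id_1} y \Leftrightarrow f(x) \rel{R} f(y)$ is satisfied (vacuously) on both sides.

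For the second clause, fix an arbitrary $R$. I would observe that the downward cone $\{S : S \leqc R\}$ is countable, because any reduction $f \colon S \leqc R$ determines $S$ uniquely as $\{(x,y) : f(x) \rel{R} f(y)\}$, and there are only $\aleph_0$ computable functions. On the other hand, $\ER$ has $\cc$-many equivalence relations in total: the assignment $X \mapsto E(X)$ is at most $2$-to-$1$ on proper nonempty subsets of $\omega$, so the family $\{E(X) : \emptyset \subsetneq X \subsetneq \omega\}$ already supplies $\cc$ many equivalence relations. Consequently, some $S$ must satisfy $S \nleqc R$. The uniform join $R \oplus S$ then strictly dominates $R$: the map $x \mapsto 2x+1$ shows $S \leqc R \oplus S$, so $R \oplus S \leqc R$ would force $S \leqc R$ by transitivity, contradicting $S \nleqc R$.

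Both halves are brief, and no step requires serious work beyond verifying that a computable $f$ together with the target $R$ uniquely recovers the source $S$. A more constructive alternative would be to fix a set $X \subseteq \omega$ with $X \not\leq_m R$ (e.g., any $X$ of Turing degree above $R$) and use Lemma \ref{f respects m-degrees} to deduce $E(X) \nleqc R$ directly, then place $R \oplus E(X)$ strictly above $R$ by the same join argument; but the counting argument is cleaner and is the one I would record.
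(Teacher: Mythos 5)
Your proof is correct. The first half (least element via constant maps) coincides with the paper's argument. For the second half you take a genuinely different route: the paper fixes a set $X$ of the same Turing degree as $R$, forms $E(X')$ using the Turing jump, and appeals to Lemma~\ref{f respects m-degrees} to conclude $E(X')\nleqc R$; you instead count, observing that $\{S : S\leqc R\}$ is countable (each computable $f$ recovers at most one source relation $S=\{(x,y): f(x)\rel{R}f(y)\}$) while the family $\{E(X): \emptyset\subsetneq X\subsetneq\omega\}$ has size $\mathfrak{c}$, so some $S\nleqc R$ exists; then $R\oplus S$ strictly dominates $R$. Both arguments are short and sound. The counting argument is a clean existence proof that avoids invoking the jump and Lemma~\ref{f respects m-degrees}, but it is nonconstructive; the paper's jump argument pins down a specific witness $E(X')$ and, since the structure of the witness matters in later results (e.g., reasoning about specific classes above a given degree), is the more informative choice in context. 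You in fact note the jump-based alternative yourself, so this is merely a stylistic preference rather than a gap.
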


\begin{proof}
Every constant function computably reduces $\Id_1$ to any given \er. Hence, $\bId_1$ is the least degree of $\ER$. On the other hand, for a given $R$, let $X$ be $\deg_T(R)$ and consider $E(X')$. We have that $E(X')\nleqc R$, as otherwise $X'$ would be $\leq_m R$ by Lemma \ref{f respects m-degrees}, but $R$ is strictly Turing below $X'$. So, $R<_c R\oplus E(X')$ and $R$ is not maximal.
\end{proof}

We now turn to some facts about dark \ers. The next two lemmas are adapted from the setting of ceers  \cite[Lemmas~4.6 and 4.7]{andrews2019joins}. The proof is essentially the same.

\begin{lemma}\label{dark implies self-full}
Dark equivalence relations are self-full. 
\end{lemma}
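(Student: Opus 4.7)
The plan is to argue the contrapositive: if $R$ is infinite and fails to be self-full, then $R$ is light, so $\Id \leq_c R$. So suppose $f \colon R \leq_c R$ is a reduction that avoids some class $[a]_R$, meaning $\range(f) \cap [a]_R = \emptyset$. I want to show that the computable sequence $a, f(a), f^{(2)}(a), f^{(3)}(a), \ldots$ is a transversal for $R$, i.e., its entries are pairwise $R$-inequivalent; the map $i \mapsto f^{(i)}(a)$ then witnesses $\Id \leq_c R$.

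To verify this, fix $0 \leq i < j$. Iterating the reduction property of $f$ a total of $i$ times yields
\[
f^{(i)}(a) \rel{R} f^{(j)}(a) \iff a \rel{R} f^{(j-i)}(a).
\]
Since $j - i \geq 1$, the element $f^{(j-i)}(a)$ lies in $\range(f)$, hence outside $[a]_R$ by the avoidance hypothesis. Thus $a \not\rel{R} f^{(j-i)}(a)$, and so $f^{(i)}(a) \not\rel{R} f^{(j)}(a)$.

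Consequently $\{f^{(i)}(a) : i \in \omega\}$ is a computable set of pairwise $R$-inequivalent natural numbers, so $R$ is light. Contrapositively, any dark $R$ is self-full. There is essentially no obstacle here beyond the bookkeeping of the iterated reduction condition; the only temptation to be resisted is to try to argue directly from darkness rather than negating self-fullness and producing an explicit computable transversal via the orbit of the avoided class.
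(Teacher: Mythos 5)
Your proof is correct and takes essentially the same approach as the paper: both observe that the orbit of a point in the avoided class under iteration of $f$ yields a computable sequence of pairwise $R$-inequivalent numbers, giving a reduction $\Id \leqc R$ and contradicting darkness. One small imprecision: you call $\{f^{(i)}(a) : i \in \omega\}$ a \emph{computable} set, but in general it is only guaranteed to be c.e.; this is harmless here, since the computable map $i \mapsto f^{(i)}(a)$ already witnesses $\Id \leqc R$, which is what lightness requires.
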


\begin{proof}
Let $R$ be dark. Suppose that there is  $f\colon  R \leqc R$ which avoids a given  $X\in\omega_R$. Let $x \in X$ and consider $\orb_f(x)$. From the fact that $f$ is a self-reduction of $R$  and $X\notin \range(f^\star)$, it follows that $\orb_f(x)$ is a c.e.\ infinite transversal of $R$, contradicting the darkness of $R$.
\end{proof}

\begin{lemma}\label{dark not reduces to proper quotient}
If $R$ is dark, then $R$ is not reducible to any of its proper quotients.
\end{lemma}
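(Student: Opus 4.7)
The plan is to argue by contrapositive. Assume $f : R \leq_c R_{/A}$ with $R_{/A}$ a proper quotient of $R$, and produce a c.e.\ infinite transversal of $R$, contradicting darkness.

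First, I would fix a pair $(a,a')$ with $a \not\rel R a'$ but $a \rel{R_{/A}} a'$, and set $C = [a]_{R_{/A}}$, so $C$ contains at least the two distinct $R$-classes $[a]_R$ and $[a']_R$. Since $f^\star : \omega_R \to \omega_{R_{/A}}$ is injective, at most one $R$-class can have $f^\star$-image equal to $C$. A short case check then produces an $R$-class $K \subseteq C$ with $f(\omega) \cap K = \emptyset$, except in the residual case in which the unique preimage $[b]_R$ of $C$ satisfies that $f([b]_R)$ already covers every $R$-class of $C$ (forcing $f^\star$ to be a bijection).

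In the main case, for any $k \in K$ I claim $\orb_f(k) = \{f^n(k) : n \geq 1\}$ is a c.e.\ infinite transversal. The key mechanism is the \emph{peeling identity} $f(u) \rel R f(v) \Rightarrow u \rel R v$, which follows from $R \subseteq R_{/A}$ and the reduction property of $f$. Indeed, if $f^i(k) \rel R f^j(k)$ for some $1 \leq i < j$, peeling off $f$ a total of $i$ times yields $k \rel R f^{j-i}(k)$, placing $f^{j-i}(k)$ in $K \cap f(\omega)$ and contradicting the case hypothesis. The same mechanism shows the values $f^n(k)$ are pairwise distinct, so the orbit is indeed an infinite c.e.\ transversal, contradicting $R$ being dark (which by Lemma \ref{dark implies self-full} is also self-full, though here the transversal construction already suffices).

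The main obstacle is the residual case. There I would pick $x_1, x_2 \in [b]_R$ with $f(x_1) \in [a]_R$ and $f(x_2) \in [a']_R$; since $x_1 \rel R x_2$ but $f(x_1) \not\rel R f(x_2)$, iterative peeling produces $f^n(x_1) \not\rel R f^n(x_2)$ for every $n \geq 1$. The remaining task is to show that at least one of the orbits $\orb_f(x_1), \orb_f(x_2)$ is an infinite transversal: any cycle of period $k_i$ in the orbit $\orb_f(x_i)$ produces, via the bijectivity of $f^\star$, a chain $[f(x_i)]_R, f^\star([f(x_i)]_R), \ldots, f^{\star\,k_i-1}([f(x_i)]_R) = [b]_R$ terminating uniquely at $[b]_R$; matching the two chains back from $[b]_R$ under the bijection $f^\star$ forces $[a]_R = [a']_R$ (either directly when $k_1 = k_2$, or via the derived equality $f^{\star\,k_2-1}([a]_R) = f^{\star\,k_2-1}([a']_R) = [b]_R$ in the other case), contradicting $a \not\rel R a'$. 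The interplay between peeling (available backward in $R$) and the bijectivity of $f^\star$ on classes is the technical heart of the argument.
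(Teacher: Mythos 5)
Your "main case" (some $R$-class $K$ inside $C$ is avoided by $f$) is correct: the peeling identity $f(u)\rel{R_{/A}}f(v)\Rightarrow u\rel{R}v$ does exactly what you claim, and $\orb_f(k)$ for $k\in K$ is then a c.e.\ infinite transversal. This matches the mechanism the paper also uses.

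The residual case, however, has a genuine gap, concentrated in the phrase \emph{``via the bijectivity of $f^\star$''} and the chain $[f(x_i)]_R, f^\star([f(x_i)]_R),\ldots,(f^\star)^{k_i-1}([f(x_i)]_R)$. Recall that $f^\star$ maps $\omega_R$ to $\omega_{R_{/A}}$, so $(f^\star)^2$ is not even defined: you cannot iterate it, and there is no ``bijectivity of $f^\star$'' to appeal to. Moreover, even informally reading $f^\star$ as the forward map on $R$-classes does not work: since $f\colon R\leqc R_{/A}$ and not $R\leqc R$, $f$ need not send an $R$-class inside a single $R$-class; indeed in your residual case $f([b]_R)$ by definition meets \emph{every} $R$-class of $C$, so the forward class map is undefined precisely at $[b]_R$. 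The ``cycle of period $k_i$'' phrasing has the same issue: $f^{k_i}(x_i)\rel{R}x_i$ gives a return to $[b]_R$, but it does not give $f^{k_i+n}(x_i)\rel{R}f^n(x_i)$, so the orbit is not actually $R$-periodic. As written, the argument in the residual case does not close.

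The underlying idea can be repaired: define the \emph{backward} map $g$ on $\omega_R$ by letting $g(K)$ be the unique $R$-class $K'$ with $f(K')\cap K\neq\emptyset$ (uniqueness is exactly injectivity of $f^\star$, and $g$ is defined on every orbit class once the orbit returns to $[b]_R$). Then, taking $k_i\geq 1$ minimal with $f^{k_i}(x_i)\in[b]_R$ and iterating $g$ from $[b]_R$, one gets $g^{m}([b]_R)=[f^{k_1-m}(x_1)]_R=[f^{k_2-m}(x_2)]_R$ for $0\le m\le\min(k_1,k_2)$; the minimality of $k_2$ forces $k_1=k_2$, and then $m=k_1-1$ yields $[a]_R=[a']_R$, a contradiction. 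But this rewrite is substantially different from what you wrote, and the gap needs flagging.

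It is also worth noting that the paper sidesteps the case split entirely. Instead of locating an avoided class or a preimage class $[b]_R$, it picks $x\in X$, $y\in Y$ with $X,Y$ two $A$-collapsed $R$-classes and shows, by a minimality-of-return argument using the same peeling mechanism, that at least one of $\orb_f(x),\orb_f(y)$ cannot re-enter $X\cup Y$; that orbit is then a transversal. No dichotomy into ``avoided class exists / $f$ covers $C$'' is needed, which makes the argument shorter and avoids the delicate residual case altogether.
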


\begin{proof}
%Assume that there is $f\colon  R\leq_c R$ which avoids some $X\in\omega_R$. Let $Y\in\omega_R$ be an equivalence class different from $X$ and consider the  quotient $R_{/X\times Y}$ obtained by collapsing $X$ and $Y$ in $R$. Since $f$ avoids $X$, we have that $f\colon  R\leq_c R_{X\times Y}$. This shows that if $R$ is not reducible to any of its proper quotients, then $R$ is self-full.
Towards a contradiction, suppose that a dark $R$ is reducible to one of its proper quotients $R_{/A}$, via some $f$. Note that, since $R$ is dark, $R_{/A}$ must be infinite. Now, let $X,Y \in\omega_R$ be two equivalence classes that are collapsed in $R_{/A}$ and choose $x\in X$ and $y\in Y$. We claim that at least one of $\orb_f{(x)}$ or $\orb_f{(y)}$ cannot intersect $X\cup Y$. 
%FROM URI: Notation problem: $\orb$ includes the $0$th iterate of $f$, so $x\in \orb_f(x)$ and $y\in \orb_f(y)$. Probably we can just remove that $0$th iterate of $f$ from the definition of $\orb$?
Indeed, suppose that $i,j>0$ are minimal so that $\set{f^{(i)}(x), f^{(j)}(y)}\subseteq X\cup Y$, and, without loss of generality, suppose $i\geq j$. Since $X$ and $Y$ are collapsed in $R_{/A}$, we  have that  $f^{(i)}(x)\rel{R_{/A}}f^{j}(y)$. But since $f\colon  R\leqc R_{/A}$ and $R_{/A}\supseteq R$, this would imply that $f^{(i-j)}(x)\rel{R}y$, which either contradicts $x \rel{\cancel{R}} y$, if $i=j$, or contradicts the minimality of $i$, if $i>j$.

%
% $f\colon  R\leq_c R_{/A}$, and $R_{/A}\supseteq R$, we have that  $(f^{(i)}(x),f^{j}(y))\in R_{/A}$ implies $(f^{(i-j)}(x),y)\in R$, which either contradicts $x \cancel{R} y$ (if $i=j$), or contradicts the minimality of $i$ (if $i>j$).

So, one can assume that $\orb_f(x)\cap (X\cup Y)=\emptyset$. Now suppose that, for $i> j$, $f^{(i)}(x)\rel{R} f^{(j)}(x)$. Reasoning as above, we obtain that $f^{(i-j)}(x)\rel{R} x$, a contradiction. Hence, $\orb_f(x)$ would be a c.e.\ transversal of $R$. But this contradicts the darkness of $R$.
%Since $f\colon  R\leqc R_{/A}$, it must be the case that either $\orb_f{\min X}\cap X\cup Y=\emptyset$ or $\orb_f{\min Y}\cap X\cup Y=\emptyset$  reduces It must be the case that 
%On the other hand, assume that $R$ is self-full and, towards a contradiction, suppose that there is $A$ such that 
\end{proof}

We now introduce the dark minimal equivalence relations.
\begin{defn}
	An equivalence relation $R$ is \emph{dark minimal} if it is dark and its degree is minimal over $\bF$, i.e., if $S<_c R$ then $S$ is finite.
\end{defn}

%\luca{As recommended by the second reviewer, I have moved the following lines about the existence of dark minimal equivalence relations before the lemmas about their properties}.

Dark minimal equivalence relations exist (see \cite[Theorem~4.10]{andrews2019joins} for examples of dark minimal ceers) and they will occur several times in this paper, as their combinatorial properties will facilitate our study of the logical complexity of $\ER$. We conclude the preliminaries highlighting a couple of fundamental features of dark minimal equivalence relations.

\begin{lem}\label{lem:dark minimal and ce sets}
	Let $R$ be a dark minimal equivalence relation. Let $W$ be a c.e.\ set which intersects infinitely many $R$-classes. Then $W$ must intersect every $R$-class.
\end{lem}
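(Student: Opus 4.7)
The plan is to argue by contradiction, using dark minimality together with self-fullness to rule out the existence of an $R$-class disjoint from $W$.

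Suppose $X \in \omega_R$ is an $R$-class with $X \cap W = \emptyset$. I would work with the restriction $R\restriction W$. By Remark~\ref{rmk:restrictions}, the inclusion map witnesses $R\restriction W \leqc R$, so there are two cases to split on.

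First, suppose $R\restriction W <_c R$. Since $R$ is dark minimal, this forces $R\restriction W$ to be finite, i.e., to have only finitely many equivalence classes. But by hypothesis $W$ meets infinitely many $R$-classes, so the quotient $\omega_{R\restriction W}$ is infinite — contradiction.

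Second, suppose $R\restriction W \equivc R$. Let $h\colon\omega\to W$ be a computable surjection used to define $R\restriction W$ (so $h$ also serves as the inclusion reduction into $R$), and let $f\colon R\leqc R\restriction W$. Then $h\circ f$ is a computable self-reduction of $R$, and its range is contained in $W$, so it avoids the class $X$. Since $R$ is dark, Lemma~\ref{dark implies self-full} says $R$ is self-full, contradicting the existence of such a reduction.

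The two cases together exhaust the options, so no such missed class $X$ exists. The only step requiring any care is verifying that in the second case the composition really avoids $X$, which follows immediately from $\range(h)\subseteq W$ and $X\cap W=\emptyset$; the rest is a direct appeal to the preceding lemmas.
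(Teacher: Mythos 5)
Your proof is correct and follows essentially the same route as the paper: restrict to $W$, use dark minimality to conclude $R\restriction W \equivc R$ (your Case 1 is just the paper's observation that $R\restriction W$ is infinite hence not in $\mathcal{F}$, phrased as a ruled-out subcase), then compose reductions through $R\restriction W$ and invoke self-fullness. The paper presents the argument directly rather than by contradiction, but the content is identical.
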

\begin{proof}

Suppose $W$ intersects infinitely many $R$-classes. Consider the equivalence relation $R\restriction W$ and note that $R\restriction W\equivc R$ since $R\restriction W$ is not in $\F$ and $R$ is minimal over $\F$. Thus, we have reductions $R\leq R\restriction W\leq R$ with the second reduction given by inclusion. Since $R$ is dark, it is self-full by Lemma \ref{dark implies self-full}, so the reduction of $R$ to itself through $R\restriction W$ must hit every $R$-class. In particular, $W$ must intersect every $R$ class.
%
%Note that $R\restriction W <_c R\restriction W \oplus \Id_1 \leq R$. The strict inequality follows from the darkness of $R$ and Lemma \ref{dark implies self-full}, and the second reduction is by extending the inclusion of $R\restriction W$ to $R$ by sending $\Id_1$ to $y$. But $R\restriction C$ has infinitely many classes, so this contradicts the minimality of $R$ over $\F$.
\end{proof}

For the next lemma, recall that two sets of natural numbers $A,B$ are \emph{computably separable} if there is a computable set $C$ such that $A \subseteq C$ and $C \cap B=\emptyset$.

\begin{lemma}\label{dark minimal implies r.i.}
		Let $R$ be a dark minimal equivalence relation. Then the elements of $\omega_R$ are pairwise computably inseparable.
\end{lemma}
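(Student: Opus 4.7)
The plan is to derive the result directly from Lemma~\ref{lem:dark minimal and ce sets} by an argument by contradiction. Suppose $X, Y \in \omega_R$ are distinct and computably separable via a computable set $C$ with $X \subseteq C$ and $C \cap Y = \emptyset$. Both $C$ and its complement $\overline{C}$ are c.e.\ sets, so Lemma~\ref{lem:dark minimal and ce sets} applies to each: any c.e.\ set that intersects infinitely many $R$-classes must in fact meet every $R$-class.

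Apply this to $C$: since $C \cap Y = \emptyset$, the set $C$ fails to intersect the class $Y$, and hence (by the contrapositive of Lemma~\ref{lem:dark minimal and ce sets}) $C$ can intersect only finitely many $R$-classes. Symmetrically, $\overline{C}$ is disjoint from $X$, so $\overline{C}$ also meets only finitely many $R$-classes.

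The contradiction is then immediate: $\omega = C \cup \overline{C}$, so $\omega$ meets only finitely many $R$-classes, contradicting the fact that $R$ is dark and therefore infinite (so $\omega_R$ has infinitely many classes, all of which of course meet $\omega$). No step here looks subtle; the whole content of the lemma is packaged into the previous lemma about c.e.\ sets and dark minimal relations, and the remaining bookkeeping is a single sentence.
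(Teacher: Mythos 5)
Your proof is correct and uses exactly the same key ingredient as the paper, namely Lemma~\ref{lem:dark minimal and ce sets} applied to a computable set $C$ and its complement together with the pigeonhole fact that one of them must hit infinitely many $R$-classes. The paper phrases it directly rather than by contradiction, but the mathematical content is identical.
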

\begin{proof}
	Let $C$ be any computable set. Either $C$ or $\omega\smallsetminus C$ intersects infinitely many $R$-classes. Thus by Lemma \ref{lem:dark minimal and ce sets}, either $C$ or $\omega\smallsetminus C$ intersects every $R$-class, so $C$ cannot separate two $R$-classes.
\end{proof}

\section{Definability in $\ER$ and existence of least upper bounds}\label{sec:definability}

A natural way of understanding the logical complexity of a structure is by exploring which of its fragments are definable. In this section, we show that many natural families of equivalence relations are first-order definable without parameters.

\subsection{Defining the class of finite equivalence relations} In the case of ceers, the equivalence relations with finitely many equivalence classes are easily characterized: A ceer $R$ has $n$ equivalence classes 
if and only if $R \equivc \Id_n$. Hence in $\Ceers$, $\bF$ coincides with $\bI$ (and therefore it has order type $\omega$). These form an initial segment of $\Ceers$ and they are definable as the collection of nonuniversal ceers which are comparable to every ceer.

In $\mathbf{ER}$, the picture is much more delicate.
 %in fact  Theorem \ref{F2secondorderarithm}  below says that every countable distributive lattice embeds as an initial segment in $\F_2$. 
 For the moment, just observe that $\mathcal{F}\not\subseteq \I$: to see this, take   $E(X)$ with $X$ noncomputable. Moreover, while $\Id$ bounds $\I$, no equivalence relation can bound $\F$ (see the proof of Observation \ref{obs:upward density}). 

 We will show that $\bI$ is definable in $\ER$ as the collection of degrees which have  a least upper bound with any other degree, and from  that definition  will easily follow that $\bF$ is also definable. To obtain this result, throughout this section we will focus on the existence of least upper bounds of \ers, obtaining several structural results of independent interest.

The following lemma describes the shape of a potential least upper bound of \ers. An upper bound $T$ of equivalence relations $R,S$ is \emph{minimal} if there is no upper bound $V$ of $R,S$ such that $V <_c T$.

\begin{lemma}\label{normal form for joins}
Suppose $f\colon  R\leqc T$ and $g\colon  S\leqc T$. Then there is a pure quotient $U$ of $R\oplus S$ and reductions $f_0\colon R\leqc U$ given by $f_0(x)=2x$ and $g_0\colon  S\leqc U$ given by $g_0(x)=2x+1$ and $h\colon  U\leqc T$ so that $f=h\circ f_0$ and $g=h\circ g_0$. 

In particular, if $T$ is a minimal upper bound of \ers $R$ and $S$, then $T$ is equivalent to a pure quotient of $R\oplus S$.
\end{lemma}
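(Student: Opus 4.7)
The plan is to build $U$ as the pullback of $T$ along the obvious ``coproduct'' map. Concretely, define $h\colon\omega\to\omega$ by
\[
h(2x)=f(x),\qquad h(2x+1)=g(x),
\]
and let $U$ be the equivalence relation on $\omega$ defined by $a\mathrel{U} b \iff h(a)\mathrel{T} h(b)$. Then $U$ is automatically an equivalence relation, and $h$ is by construction a reduction $U\leqc T$. Setting $f_0(x)=2x$ and $g_0(x)=2x+1$, we trivially get $f=h\circ f_0$ and $g=h\circ g_0$.

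Next I would check that $U$ is a pure quotient of $R\oplus S$. Purity is immediate from the definition of $h$: using that $f\colon R\leqc T$,
\[
2x\mathrel{U} 2y \iff f(x)\mathrel{T} f(y) \iff x\mathrel{R} y,
\]
so $U\restriction\evens$ agrees with $(R\oplus S)\restriction\evens$, and the analogous computation with $g$ handles the odds. To see that $U$ is a \emph{quotient} of $R\oplus S$, note that $x\mathrel{R} y$ implies $f(x)\mathrel{T} f(y)$, hence $2x\mathrel{U} 2y$, and similarly for odd pairs; thus $R\oplus S\subseteq U$, so $U=(R\oplus S)_{/A}$ for $A:=U\setminus(R\oplus S)$. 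The displayed equations $f_0(x)=2x$ and $g_0(x)=2x+1$ are reductions because they witness precisely the purity computations above.

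Finally, for the ``in particular'' clause, suppose $T$ is a minimal upper bound of $R$ and $S$. Apply the construction to any reductions $f\colon R\leqc T$ and $g\colon S\leqc T$ to produce the pure quotient $U$ of $R\oplus S$ together with $h\colon U\leqc T$. By Lemma~\ref{obs:pure quotients are ubs}, $U$ is itself an upper bound of $R$ and $S$, and $U\leqc T$ via $h$; by minimality of $T$ we conclude $T\equivc U$.

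There is essentially no obstacle here beyond bookkeeping: the only thing to verify carefully is purity, and that follows directly from the defining equation $a\mathrel{U} b\iff h(a)\mathrel{T} h(b)$ combined with the fact that $f$ and $g$ are reductions.
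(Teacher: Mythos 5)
Your proof is correct and essentially the same as the paper's. The paper defines the collapse set $A:=\{(2x,2y+1): f(x)\rel{T} g(y)\}$ and takes $U=(R\oplus S)_{/A}$ with $h=f\oplus g$, while you define $U$ directly as the pullback $a\rel{U}b \iff h(a)\rel{T}h(b)$; these are the same relation, and the verifications of purity, of $h$ being a reduction, and of the minimality clause proceed identically.
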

\begin{proof}
Let $f\colon  R\leqc T$, $g\colon S\leqc T$, and $A:=\set{(2x,2y+1): f(x) \rel{T} g(y)}$. Then $R\oplus S_{/ A}$ is a pure quotient of $R\oplus S$.
% Indeed, suppose that there are $[u]_R\neq [v]_R$ such that $2u \rel{R\oplus S_{/A}} 2v$. By definition of $A$, we have $f(u) \rel{T} f(v)$, contradicting the fact that $f$ is reduction and $u\rel{\cancel{R}} v$.
Now, observe that $R\oplus S_{/ A}\leqc T$ via the function $h = f\oplus g$. And observe that $f = h\circ (x\mapsto 2x)$ and $g= h\circ (x\mapsto 2x+1)$.
%Let $x\in R$ be $\sim$-related to $y\in S$ if and only if $f(x)Tg(y)$. It is clear that $R\oplus S_{/\sim}$ is a good quotient of $R\oplus S$. 
% Hence, if $T$ is a minimal upper bound of $R$ and $S$, we obtain $R\oplus S_{/ A} \equivc T$.
\end{proof}

In $\ER$, to have a least upper bound is a rather strong property. The next result will state that any pair of \ers which are sufficiently incomparable cannot have a least upper bound.

%
%
%$I$ (in $\Ceers$). In $\mathbf{ER}$ the situation is more 
%
%We initiate our study of $\mathbf{ER}$ by focusing on equivalence relations with finitelu many classes. 
%
%
% $\mathcal{F}$, the initial segment consisting of the equivalence relations with finitely many classes. For every $k$, we denote by $\mathcal{F}_k$ the family of the equivalence relations with exactly $k$ equivalence classes. 
%
%
%
%
%
%
%
%\section{On the existence of joins}

%Of particular use in defining subclasses in $\ER$ will be examining when pairs of \ers have least upper bounds. We begin by seeing that any pair of \ers that are sufficiently different cannot have a least upper bound.

\begin{defn}
	Define $R\leq_F S$, if there is computable set $A$ so that $R\restriction  A\leqc S$ and $R\restriction \overline{A}$ is finite. 
\end{defn}
Obviously, $\leqc$-reducibility implies $\leq_F$-reducibility. The converse does not hold as there are $\leqc$-incomparable $X,Y\in \F$, but $X\equiv_F Y$.  

\begin{thm}\label{F incomparable no join}
	If $R$ and $S$ are \ers which are $\leq_F$-incomparable, then $R$ and $S$ do not have a least upper bound in $\ER$.
\end{thm}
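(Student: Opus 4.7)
Suppose toward contradiction that $R,S$ are $\leq_F$-incomparable and have a least upper bound $T$ in $\mathbf{ER}$. The plan is to first put $T$ into the ``normal form'' supplied by Lemma~\ref{lem:below a join}: since $R \oplus S$ is itself an upper bound of $R,S$, we have $T\leqc R \oplus S$, and Lemma~\ref{lem:below a join} yields $T\equivc R_0 \oplus S_0$ with $R_0\leqc R$ and $S_0\leqc S$. Working up to $\equivc$, identify $T$ with $R_0 \oplus S_0$.

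Next I would show that both $R_0$ and $S_0$ have infinitely many classes. Since $R\leqc T = R_0 \oplus S_0$, applying Lemma~\ref{lem:below a join} once more gives $R\equivc R_1 \oplus S_1$ with $R_1\leqc R_0$ and $S_1\leqc S_0\leqc S$. The underlying reduction $R \to R_1 \oplus S_1$ partitions $\omega$ into computable sets $A$ (preimage of the even half) and $\overline A$, with $R\restriction A\leqc R_1$ and $R\restriction\overline A\leqc S$. If $R_1$ were finite then $R\restriction A$ would have only finitely many classes, and the set $\overline A$ would witness $R\leq_F S$, contradicting our hypothesis. Thus $R_1$---and hence $R_0$---is infinite; the symmetric argument using $S\leqc T$ and $S\not\leq_F R$ shows $S_0$ is infinite.

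The final and main task is to construct a pure quotient $T' = (R\oplus S)_{/B}$, which is automatically an upper bound of $R,S$ by Lemma~\ref{obs:pure quotients are ubs}, with $T\nleqc T'$---contradicting $T$'s minimality. I would build $B$ in stages via a diagonalization: enumerate candidate reductions $\psi_e$ and, at stage~$e$, add one or more cross-identifications $(2a,2b+1)$ to $B$ that defeat $\psi_e$ as a reduction $T \to T'$. Concretely, I would arrange $T'$ to be a total pure quotient and use Lemma~\ref{lem:reducing to total pure quotients and finitely in odds}, together with $R\nleqc S$ and $S\nleqc R$ (both immediate from $\leq_F$-incomparability), to see that any candidate reduction $\psi_e\colon T \leqc T'$ must hit infinitely many even and infinitely many odd $T'$-classes; this leaves room at each stage to pick $a,b$ in previously untouched $R$- and $S$-classes and either create a new collapse or prevent one, disrupting $\psi_e$ while preserving purity. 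The main obstacle is precisely this construction: one must organize requirements that are finitely satisfiable at each stage and mutually consistent, and exploit the infinitude of $R_0, S_0$ from the second step together with $\leq_F$-incomparability to guarantee that every candidate $\psi_e$ has sufficient activity in both halves for a suitable diagonalizing pair to be found without violating either purity or earlier commitments.
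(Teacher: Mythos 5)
Your first two steps are fine but largely tangential: the reduction to a normal form and the observation that $R_0,S_0$ are infinite are correct, though the useful normal form here is the one from Lemma~\ref{normal form for joins} (that a minimal upper bound is equivalent to a \emph{pure quotient} of $R\oplus S$), not the $R_0\oplus S_0$ decomposition from Lemma~\ref{lem:below a join}. The real content of the theorem is the diagonalization, and you explicitly flag it as ``the main obstacle'' without resolving it.

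The specific route you sketch has a timing problem. You propose to build $T'$ as a \emph{total} pure quotient so that Lemma~\ref{lem:reducing to total pure quotients and finitely in odds} applies, and to use it (together with $T\nleqc R$ and $T\nleqc S$) to argue that any candidate $\psi_e\colon T\leqc T'$ must hit infinitely many classes of each parity. But that lemma applies only to the \emph{finished} total quotient; at stage $e$, $T'$ is not yet total (only finitely many merges have been committed), so you cannot invoke it to justify that the stage-$e$ search terminates. You would need to either commit to the total structure generically in advance (introducing its own bookkeeping) or find a different termination argument. The paper's construction sidesteps this entirely: it builds a pure quotient $V$ of $R\oplus S$ with only finitely many cross-collapses at each stage (so $V_e=R\oplus S_{/A}$ for finite $A$, not total), restrains finitely many classes, and proves termination by a direct appeal to the definition of $\leq_F$ --- if the search at stage $e+1$ does not halt, then $\phi_e\colon T\leqc V_e$ hits only finitely many classes of one parity, yielding $T\leq_F R$ (or $T\leq_F S$), hence $S\leq_F R$ (or $R\leq_F S$), contradicting $\leq_F$-incomparability. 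This uses the hypothesis exactly where it is needed and requires no totality. You also do not address the paper's ``outcome $(a)$'' --- spotting and restraining an outright failure of $\phi_e$ to be a reduction --- which is essential because the candidate map need not already respect $T$; handling this in a total quotient, where ``restraining'' forces you to commit a class to merge elsewhere rather than simply leaving it uncollapsed, adds further complications you don't mention. In short: right overall shape, but the crux (the terminating diagonalization) is both unproved and aimed at a harder target than necessary.
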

\begin{proof}
Suppose towards a contradiction that $T$ is the least upper bound for $R$ and $S$. By Lemma \ref{normal form for joins}, we can assume that $T$ is a pure quotient of $R\oplus S$. We will build by stages another pure quotient $V (=\bigcup V_s)$ of $R\oplus S$ such that $T \nleq V$, contradicting the supposition. To do so, we let $V_0$ be $R \oplus S$ and, at further stages, we will collapse $R$-classes and $S$-classes in $V$ to diagonalize against all potential reductions from $T$ to $V$. We note that we are constructing $V$ to be c.e. in the Turing degree $\deg_T(R)\vee \deg_T(S)\vee \deg_T(T)\vee \mathbf{0}''$.

\subsubsection*{The construction} 
At stage $s$, we may \emph{restrain} some $V_s$-classes so that, at the end of the construction, they will be V-classes.
When we say that numbers are  restrained, we mean that they come from restrained classes.
\subsubsection*{Stage $0$} Let $V_0:= R\oplus S$. Do not restrain any  equivalence class.
 
\subsubsection*{Stage $e+1$} If $\phi_e$ is nontotal, let $V_{e+1}:=V_e$. Otherwise, search for a pair of distinct numbers $(u,v)$ such that $\phi_e(u)\downarrow=x_e,$ $\phi_e(v)\downarrow=y_e$, and
\begin{enumerate}
\item[$(a)$] either $u\rel{T} v \notleftright x_e \rel{V_e} y_e$,
\item[$(b)$] or $u\rel{\cancel{T}} v$ and $x_e$ and $y_e$ have different parity and they are both unrestrained.
\end{enumerate} 

\smallskip

We will show in Claim~\ref{claim:terminate} below  that such a pair will always be found. If the outcome is $(a)$, let $V_{e+1}:=V_e$ and restrain the $V_e$-classes of $x_e$ and $y_e$. If
 the outcome is $(b)$, let $V_{e+1}:={V_e}_{/(x_e,y_e)}$ and we restrain the common $V_{e+1}$-class of $x_e$ and $y_e$. 

\subsubsection*{The verification} The verification relies on the following claim.

\begin{claim}\label{claim:terminate}
The action defined at stage $e+1$ (i.e., the search of a pair of numbers satisfying either $(a)$ or $(b)$) always terminates.
\end{claim}

\begin{proof}
Suppose that there is a stage $e+1$ at which no pair $(u,v)$ is found. This
means that $\phi_e$ is total and  $\phi_e\colon  T\leqc V_e$;  otherwise, we would reach outcome $(a)$. Next, observe that $\phi_e$ cannot hit infinitely
many equivalence classes of both $V_e \restriction \evens$ and $V_e \restriction \odds$; otherwise, since only finitely many equivalence classes are restrained at each stage and $V_e$ coincides with $R\oplus S_{/A}$ for a finite set $A$, there would be a pair of numbers of different parity which are unrestrained and  we would reach outcome $(b)$. 

So, without loss of generality, assume
that $\phi_e$ hits only finitely many classes in $V_e \restriction \odds$. Let $f$ be the following partial computable function,
\[
f(x)= \begin{cases}
\dfrac{\phi_e(x)}{2} &\text{$\phi_e(x)$ is even,}\\
\uparrow   &\text{otherwise.}
\end{cases}
\]
We have that $f\colon T \restriction \dom(f) \leqc R$ and $T \restriction \overline{\dom(f)}$ is finite. Thus, $T \leq_F R$. Since $S\leqc T$ and $T \leq_F R$, we obtain that $S \leq_F R$, which contradicts the fact that $R$ and $S$ are $\leq_F$-incomparable.
\end{proof}

It follows from the above construction  that $V$ is a pure quotient of $R\oplus S$. In particular, every time we collapse an odd with an even class, we restrain all members of that class, so it cannot be part of any future collapse. Hence, by Lemma \ref{obs:pure quotients are ubs}, $V$ is an upper bound of $R$ and $S$. Towards a contradiction, assume that $T \leqc V$ via some $\phi_i$. Claim  \ref{claim:terminate} ensures that the action defined at stage $i+1$ terminates with either disproving that $\phi_i$ is a reduction from $T$ to $V_i$ or by
providing two equivalence classes that will be $V$-collapsed to diagonalize against $\phi_i$. That is, the restraints and the fact that $V$ is a quotient of $V_i$ guarantee that $\phi_i \colon  T \not\leqc V$, a contradiction.
\end{proof}

%\begin{proof}
%	Suppose towards a contradiction that $T$ is a least upper bound for $E$ and $R$. By observation \ref{normal form for joins}, we can assume without  We build an \er $X$ so that $X\restriction \evens = E$, $X\restriction \odds = R$. We let $e_1,e_2,\ldots$ enumerate the classes of $X\restriction \evens$ and $r_1,r_2,\ldots$ enumerate the classes of $X\restriction \odds$. Let $\sigma$ be some sufficiently generic permutation of $\omega$. In $X$, we further collapse $e_i$ with $r_{\sigma(i)}$ for each $i$.
%	
%	We now want to show that $T\not\leq_c X$, so suppose that $f$ is a reduction of $T$ to $X$.  Suppose that infinitely many of the sets $e_i$ contain an element of $\im(f)$ and infinitely many of the sets $r_j$ contain an element of $\im(f)$. Then since $\sigma$ is sufficiently generic, there will be some $e_i$ containing $f(x)$ and $r_{j}$ containing $f(y)$ so that $x\rel{T}y$ if and only if $\sigma(i)\neq j$. That is, the genericity of $\sigma$ ensures that $f$ is not a reduction of $T$ to $X$. Thus, we can suppose that there are only finitely many $e_i$ in the image of $f$. Let $g$ be the composition of the reduction of $E$ to $T$ and $T$ to $X$. Let $A$ be the set of elements $x\in \omega$ so that $g(x)$ is odd. Then $A$ is computable and $g$ is a reduction of $E\restriction A$ to $R$. Further, the image of $g\restriction \omega\setminus A$ hits only finitely many classes in $X$. It follows that $E\restriction\omega\setminus X$ has only finitely many classes. Thus $E\leq_F R$ contrary to assumption.
%\end{proof}

\begin{cory}\label{cor:dark no lub with id}
	No dark \er has a least upper bound with $\Id$.
\end{cory}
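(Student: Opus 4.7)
The plan is to apply Theorem \ref{F incomparable no join} to the pair $(R, \Id)$, which reduces the task to showing that $R$ and $\Id$ are $\leq_F$-incomparable whenever $R$ is dark.

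For the direction $\Id \not\leq_F R$: suppose toward a contradiction that there is a computable set $A$ with $\Id \restriction A \leqc R$ and $\Id \restriction \overline{A}$ finite. Since $\Id \restriction \overline{A}$ has exactly $|\overline{A}|$ classes, $\overline{A}$ must be finite, so $A$ is cofinite and $\Id \restriction A \equivc \Id$. Then $\Id \leqc R$, contradicting the darkness of $R$.

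For the direction $R \not\leq_F \Id$: suppose there is a computable $A$ with $R \restriction A \leqc \Id$ and $R \restriction \overline{A}$ finite. Since $R$ is dark (hence infinite) and $R \restriction \overline{A}$ has only finitely many classes, $R \restriction A$ must be infinite. By Observation \ref{ceers initial}, every \er reducible to $\Id$ is computable and belongs to $\bI \cup \{\bId\}$, so $R \restriction A$ is computable with infinitely many classes, and therefore $R \restriction A \equivc \Id$. Composing $\Id \leqc R \restriction A$ with the inclusion reduction $R \restriction A \leqc R$ (see Remark \ref{rmk:restrictions}) yields $\Id \leqc R$, again contradicting darkness.

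Having established $\leq_F$-incomparability of $R$ and $\Id$, Theorem \ref{F incomparable no join} immediately concludes that they admit no least upper bound in $\ER$. The argument is essentially a translation between the notions of darkness and $\leq_F$-reducibility; no real obstacle arises, since the content of the corollary lies in the earlier theorem.
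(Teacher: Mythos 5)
Your proof is correct and follows essentially the same route as the paper: apply Theorem \ref{F incomparable no join} by establishing $\leq_F$-incomparability of $R$ and $\Id$, using Observation \ref{ceers initial} for the second direction. The only cosmetic difference is that in showing $R \not\leq_F \Id$ you argue forward (deduce $R\restriction A$ is infinite, hence $\equivc \Id$, hence $R$ is light) while the paper runs the contrapositive (assume $R\restriction A \nequivc \Id$, deduce $R\restriction A$ and hence $R$ is finite); these are equivalent.
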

\begin{proof}
	It suffices to show that no dark \er $R$ can be $F$-comparable with $\Id$. On the one hand, note that $\Id\restriction A\equiv\Id$ for any cofinite $A$ and thus $\Id\restriction A \not\leqc R$ since $R$ is dark. Therefore $\Id \nleq_F R$.
	On the other hand, suppose $R\restriction A\leqc \Id$ with $R\restriction \overline{A}$ finite. Observe that $R\restriction A \nequivc \Id$, as otherwise $R$ would be light because $\Id\leqc R \restriction A\leqc R$. So, $R\restriction A<_c\Id$ and, by Observation \ref{ceers initial}, this means that $R\restriction A$ is finite. As  $R\restriction \overline{A}$ is also finite, it follows that $R$ is finite, contradicting its darkness.
\end{proof}

Obviously, if $R\in\mathcal{F}$, then $R$ is $F$-reducible to any given \er $S$. This property does not guarantee that $R$ has a least upper bound with every other equivalence relation (see Theorem~\ref{NCA} below). The next lemma says that the finite \ers are the  only ones which are $F$-comparable with any other \er.

\begin{lem}
	If $R$ is infinite, then there is an infinite $S$ so that $R$ and $S$ are $\leq_F$-incomparable.
\end{lem}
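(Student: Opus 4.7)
The plan is to split into two cases based on whether $R$ is dark or light. If $R$ is dark, set $S=\Id$: the proof of Corollary~\ref{cor:dark no lub with id} already establishes that $\Id$ is not $\leq_F$-comparable with any dark \er, so this case is immediate.

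If $R$ is light, I plan to take $S$ to be a dark minimal \er with $S\nleqc R$; existence follows by adding the diagonalization requirements ``$\varphi_e$ is not a reduction $S\leqc R$'' to the standard construction of a dark minimal \er (cf.\ \cite{andrews2019joins}). I then verify the two $\leq_F$-incomparabilities in turn, exploiting that $R$ is light on one side and that $S$ is dark minimal on the other.

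For $R\nleq_F S$: suppose $R\restriction A\leqc S$ with $A$ computable and $R\restriction \overline A$ finite. Fix a computable transversal $\{t_i\}_{i\in\omega}$ of $R$. If only finitely many $t_i$ lay in $A$, then $\overline A$ would contain cofinitely many $t_i$, which are pairwise $R$-inequivalent, contradicting the finiteness of $R\restriction \overline A$. So infinitely many $t_i$ lie in $A$, and enumerating these computably yields a computable transversal of $R\restriction A$ (pulled back through any computable surjection $h\colon \omega\to A$ witnessing $R\restriction A$). Hence $R\restriction A$ is light, and the assumed reduction gives $\Id\leqc S$, contradicting the darkness of $S$.

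For $S\nleq_F R$: suppose $S\restriction B\leqc R$ with $B$ computable and $S\restriction \overline B$ finite. Since $S$ is infinite, $B$ is c.e.\ and intersects infinitely many $S$-classes, so by Lemma~\ref{lem:dark minimal and ce sets} it intersects every $S$-class. Hence $S\restriction B$ is not finite, and the inclusion $S\restriction B\leqc S$ combined with the minimality of $S$ over $\bF$ gives $S\restriction B\equivc S$. Thus $S\leqc R$, contradicting our choice of $S$. The main obstacle will be carrying out the construction of a dark minimal $S$ with $S\nleqc R$, but this is routine via finite injury applied to the standard dark minimal construction.
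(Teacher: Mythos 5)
Your dark case coincides with the paper's: take $S=\Id$ and invoke Corollary~\ref{cor:dark no lub with id}. The light case is where you diverge, and both halves of your $\leq_F$-incomparability check are correct \emph{given} your choice of $S$: your argument that $R\nleq_F S$ (pull the light transversal through $A$ to see $R\restriction A$ is light, contradicting darkness of $S$) is essentially the paper's, just spelled out; and your argument that $S\nleq_F R$ via Lemma~\ref{lem:dark minimal and ce sets} and dark minimality is a correct, genuinely different route from the paper's (which instead picks a dark, not necessarily minimal, $S$ with infinitely many classes whose $m$-degrees do not reduce to $R$, and then applies Lemma~\ref{f respects m-degrees} to a high-$m$-degree class contained in $A$).

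The gap is in the existence of your $S$, which you dismiss as ``routine finite injury.'' You need a dark minimal \er $S$ with $S\nleqc R$. If $R$ is a universal ceer (which is light, so this case is in scope), then \emph{every} dark minimal ceer reduces to $R$, so your $S$ cannot be a ceer. Thus ``adding requirements to the standard dark minimal construction'' does not directly apply: that construction builds a ceer, and the new diagonalization requirements $\varphi_e\colon S\nleqc R$ force the construction to consult an $R$-oracle, producing at best an $R$-c.e.\ $S$. But the minimality requirements (for each c.e.\ $W$ intersecting infinitely many $S$-classes, produce a \emph{computable} reduction $S\leqc S\restriction W$) lean on effectivity of $S$, and it is not obvious how to discharge them when $S$ is merely $R$-c.e. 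You would need to argue this carefully; as written it is a real hole, not a routine adaptation. The paper's construction avoids all of this: it builds $S$ as a quotient of a fixed dark ceer $S_0$, choosing, by a cardinality count (only countably many sets are $\leq_m R$, while each family $\mathcal{N}_i$ of candidate classes is uncountable), infinitely many classes $X_i\nleq_m R$. This gives darkness for free (quotients of dark are dark), requires no minimality, and replaces the priority argument with a one-line nonconstructive choice. If you want to keep your cleaner $S\nleq_F R$ argument, you should either supply a genuine construction of a (possibly non-c.e.) dark minimal $S\nleqc R$, or prove that there are uncountably many dark minimal degrees so that a counting argument applies; neither is established in the paper.
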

\begin{proof}
	If $R$ is dark, let $S$ be $\Id$ and use Corollary \ref{cor:dark no lub with id}.
	If $R$ is light, then let $S$ be any dark \er such that $\{Y\in \omega_S : Y\nleq_m R \}$ is infinite. We will show that such $S$ exists after verifying its $\leq_F$-incomparability with $R$.
	
	Note that if  $R\restriction \overline{A}$ is finite, then $R\restriction A$ must be light, because $R$ is  light. It follows that $R\not\leq_F S$. Next, let $A$ be so that $S\restriction \overline{A}$ is finite. There exists $[y]_S\subseteq A$  which is not $\leq_m R$. But, by Lemma \ref{f respects m-degrees} this shows that $S\restriction A\not\leqc R$.
	
	To see that such an $S$ exists, we begin with any dark ceer $S_0$ and we partition $\omega_{S_0}$ into infinitely many infinite families $\mathcal{M}_i$. Next, define
	\[
	\mathcal{N}_i:=\left\{ \bigcup_{I \in \mathcal{J}} I: \mbox{ for } \mathcal{J}\subseteq \mathcal{M}_i	
 \right\}.
	\]
Each $\mathcal{N}_i$ is obviously uncountable and so it contains a set $X_i$ whose $m$-degree does not reduce to the  degree of $R$. Let $S$ be a quotient of $S_0$ such that $X_i\in\omega_S$, for all $i$. Since a quotient of a dark \er is dark, this $S$ satisfies our requirements.
\end{proof}

Combining the last lemma with Theorem \ref{F incomparable no join}, we immediately obtain the following.

\begin{cory}\label{infNotInI}
	If $R$ is  infinite, then there is an infinite  $S$ so that $R$ fails to have a least upper bound with $S$.
\end{cory}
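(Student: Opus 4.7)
The plan is to chain together the two results immediately preceding the corollary. First I would invoke the previous lemma on the given infinite \er $R$ to produce an infinite \er $S$ such that $R$ and $S$ are $\leq_F$-incomparable. Then I would apply Theorem \ref{F incomparable no join} to this pair $(R,S)$ to conclude that $R$ and $S$ have no least upper bound in $\ER$.

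There is essentially no obstacle: the two prior results were set up precisely to combine in this way. The only mild care required is noting that the previous lemma guarantees the witness $S$ can be taken to be infinite (not merely $\leq_F$-incomparable with $R$), which is what the statement of the corollary asks for. So the proof would be a one-liner of the form: "By the previous lemma, choose an infinite \er $S$ that is $\leq_F$-incomparable with $R$; then Theorem \ref{F incomparable no join} yields that $R$ and $S$ have no least upper bound in $\ER$."
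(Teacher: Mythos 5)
Your proposal is correct and is exactly the paper's own argument: the paper introduces the corollary with the sentence ``Combining the last lemma with Theorem~\ref{F incomparable no join}, we immediately obtain the following,'' which is precisely your one-liner.
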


It might seem at this point that any pair of degrees ought to not have a least upper bound, but we now show that there are pairs of infinite degrees which have a least upper bound.

\begin{thm}\label{thm:joins can exist}
	There are incomparable \ers $R,S\notin \mathcal{F}$ which have a least upper bound.
\end{thm}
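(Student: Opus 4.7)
My plan is to prove this theorem by exhibiting specific equivalence relations $R$ and $S$ that are infinite, $\leqc$-incomparable, and have a least upper bound. The construction must carefully navigate the negative results already established: by Theorem \ref{F incomparable no join}, $R$ and $S$ must be $\leq_F$-comparable, and by Corollary \ref{cor:dark no lub with id}, neither can be dark while the other is $\equivc \Id$.

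First, I would argue that dark minimal equivalence relations cannot appear in the construction. If $R$ is dark minimal and $S$ is $\leqc$-incomparable with $R$, then writing $S \leq_F R$ would force $S \equivc T \oplus F$ with $T \leqc R$ and $F$ finite. Minimality gives $T$ finite or $T \equivc R$, and since $R$ is dark (no computable classes) we would get $R \leqc S$, a contradiction; similarly $R \not\leq_F S$. Hence the construction cannot use dark minimal relations on either side.

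Second, I would take $R = \Id \oplus E(A)$ and $S = \Id \oplus E(B)$ where $A,B$ are noncomputable sets chosen so that $A,\overline{A},B,\overline{B}$ are pairwise $\leq_m$-incomparable. Both $R$ and $S$ are infinite. Using Lemma \ref{lem:below a join}, any reduction $R \leqc S$ would decompose $E(A) \equivc X \oplus Y$ with $X \leqc \Id$ and $Y \leqc E(B)$; a case analysis on the number of classes in $X$ and $Y$, using Lemma \ref{f respects m-degrees}, rules out all cases by the orthogonality of $A$ with $B$ and $\overline{B}$. Symmetric reasoning gives $\leqc$-incomparability.

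Third, and most delicately, I would show that a specific $T$ is a least upper bound. The natural candidate $T = \Id \oplus E(A) \oplus E(B)$ is clearly an upper bound via the obvious inclusion maps. To show it is least, I would take an arbitrary upper bound $U$ with reductions $f \colon R \leqc U$ and $g \colon S \leqc U$ and construct $h \colon T \leqc U$. On the $E(A)$ and $E(B)$ parts of $T$ I would use $f$ and $g$ respectively; on the $\Id$ part I would use $f$'s restriction to the $\Id$ part of $R$, which automatically avoids $f^*([A]_R)$ and $f^*([\overline{A}]_R)$, and after hard-coding at most two ``bad'' indices also avoids $g^*([B]_S)$ and $g^*([\overline{B}]_S)$.

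The main obstacle is the possibility that $U$ merges classes across $R$ and $S$, for example if $f^*([A]_R) = g^*([B]_S)$ as a single class of $U$. Such an upper bound (e.g.\ $\Id \oplus E(A \oplus B)$) would appear to prevent $T$ from being least. I expect the resolution to come from imposing additional $\leq_m$-theoretic conditions on $A$ and $B$ (constructed via a finite-injury priority argument if necessary) that force every upper bound $U$ to contain pairwise distinct $U$-classes hosting each of the four interesting $T$-classes, perhaps by ensuring that no single set of $m$-degree $\geq A\vee B$ can be realized as a class in any upper bound of the constructed $R$ and $S$. This technical core, ensuring separation of $f$ and $g$'s images uniformly in $U$, is where the bulk of the argument would reside.
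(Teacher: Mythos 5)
Your proposal correctly identifies several constraints (e.g.\ that the pair must be $\leq_F$-comparable, and that dark minimal relations cannot appear), and your proof of incomparability of $R = \Id\oplus E(A)$ and $S = \Id\oplus E(B)$ via Lemmas~\ref{lem:below a join} and~\ref{f respects m-degrees} is fine. The fatal gap is exactly at the obstacle you flag in the final paragraph, and the fix you propose there cannot work. By Lemma~\ref{obs:pure quotients are ubs}, the pure quotient $V$ of $R\oplus S$ that collapses the $A$-class with the $B$-class (and likewise any partial merge of noncomputable $R$-classes with noncomputable $S$-classes) is \emph{always} an upper bound of $R$ and $S$; you cannot ``ensure that no single set of $m$-degree $\geq A\vee B$ can be realized as a class in any upper bound,'' since the merged $V$-class has $m$-degree exactly $\deg_m(A\oplus B)$ by construction. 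And once you choose $A,\overline{A},B,\overline{B}$ pairwise $m$-incomparable, the only $V$-class of $m$-degree $\geq A$ is the merged class, and likewise for $B$, so the $A$-class and $B$-class of your candidate $T = \Id\oplus E(A)\oplus E(B)$ cannot both inject into $V$. Hence $T\not\leqc V$, and in fact the various merging options are pairwise incomparable upper bounds, so your $R$ and $S$ have \emph{no} least upper bound at all.

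The paper sidesteps this by breaking the symmetry in a crucial way: it takes $R = R_0\oplus\Id$ with $R_0$ dark and \emph{all $R_0$-classes computable}, and $S = S_0\oplus\Id$ with $S_0\in\F\smallsetminus\I$ \emph{finite}. Then in any upper bound $U$, the $R$-classes whose $f$-image collides with $g$'s image of $S_0$-classes form a finite set (finiteness of $S_0$) of \emph{computable} classes (all $R$-classes are computable), and these can be explicitly re-routed into fresh $\Id$-slots by a shift, producing a modified reduction $f_1\colon R\leqc U$ with image disjoint from $g$'s $S_0$-image. Your construction has noncomputable classes on both sides, so no such re-routing is available, and the merging quotients become genuine incomparable upper bounds. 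To salvage your approach you would need to guarantee that any overlap only ever involves computable classes, which is precisely the extra structural hypothesis the paper imposes.
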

\begin{proof}
	
	Let $R_0$ be a dark \er with all computable classes (the existence of such equivalence relation follows from, e.g., \cite[Proposition~5.6]{gao2001computably}) and let $S_0\in \F\smallsetminus \I$.
	%\uri{Maybe get incomparability immediately by choosing $S_0\not\leqc R_0\oplus \Id$.}
	 Let $R:=R_0\oplus \Id$ and $S:=S_0\oplus \Id$. First, we note that $R$ and $S$ are incomparable. Indeed, on the one hand, there must be a noncomputable $S$-class, since  $S_0\notin \I$. By Lemma~\ref{f respects m-degrees}, this suffices to guarantee that $S\not\leqc R$, as all $R$-classes are computable. On the other hand, suppose that $i: R \leqc S$. Then, the following c.e.\ set
\[
\{x : i(2x) \in \odds \mbox{ and, for all $y<x$, }  i(2x)\neq i(2y)\}
\]	
	would be an infinite transversal of $R_0$, contradicting its darkness.
	
Next, we will prove that $R\oplus S\equivc R_0\oplus S_0\oplus \Id$ is a least upper bound of $R$ and $S$. Let $U$ be any \er with reductions $f_0\colon R\leqc U$ and $g\colon S\leqc U$. We claim that there is $f_1 : R\leqc U$ whose image is disjoint from the image of the $S_0$-classes given by $g$.
%	, i.e.,
%\[
%(\exists f_1)(\range(f_1)\cap\range(g\restriction\evens) =\emptyset).
%\]	
To prove this, define the following collection of equivalence classes of $R$
\[
\mathcal{C}:=\{X\in\omega_R : {f_0^\star}(X)\cap \range(g\restriction\evens) \neq\emptyset\}.
\]
Let $\mathcal{C}$ be $\{C_0,\ldots,C_k\}$ ($\mathcal{C}$ is finite since $S_0\in \mathcal{F}$). Moreover, note that all elements of $\mathcal{C}$ are computable. Let $m=\max(\{\min(C_i): i\leq k\})$.

The following function is  constructed from $f_0$ by suitably shifting the elements in $\Id$ to avoid the finite overlap with $g\restriction \evens$:
\[
f_1(x):=\begin{cases}
f_0(2(u+k+m)+1) &\text{$(\exists u)(x=2u+1)$}\\
f_0(2(i+m)+1) &\text{$(\exists i\leq k)(x\in C_i)$} \\
f_0(x) &\text{otherwise}.\\
\end{cases}
\]	
It is straightforward to check that $f_1$ is a computable reduction from $R$ to $U$ which satisfies the  property that $\range(f_1)\cap\range(g\restriction\evens)$ is empty.
It is exactly this property which allows to combine $g$ and $f_1$ in a natural way so as to obtain the desired reduction from $R_0\oplus S_0\oplus \Id$ to $U$:
\[
h(x):=\begin{cases}
f_1(2u) &\text{$(\exists u)(x=3u)$}\\
g(2u) &\text{$(\exists u)(x=3u+1)$} \\
f_1(2u+1) &\text{$(\exists u)(x=3u+2)$}.\\
\end{cases}
\]	
This concludes the proof.
%\lucaout{By Lemma \ref{normal form for joins}, there is a pure quotient $T_0$ of \lucainsert{$R_0\oplus S_0$} \lucaout{$R_0$ and $S_0$} so that the reductions $f\restriction \evens$ and $g\restriction \evens$ filter through $T_0$. Then consider the function $f\restriction \odds$. It is immediate that the range of $f\restriction \odds$ intersects no class in the range of $f\restriction \evens$, and it can only finitely intersect the classes in the range of $g\restriction \evens$. So, putting together $f$ (after shifting the elements in $\Id$ to avoid the finite overlap with $g\restriction \evens$) and $g\restriction \evens$, we get a reduction of $T_0\oplus \Id$ to $U$. Since every class of $R_0$ is computable, any pure quotient $T_0$ over $R_0$ and $S_0$ has the property that $R_0\oplus S_0\equivc T_0\oplus \Id_k$ for some $k$ by Lemma \ref{Collapsing a computable class is subtracing id1}. Thus $T_0\oplus \Id\equivc R_0\oplus S_0\oplus \Id$. Thus we have a reduction of $R_0\oplus S_0\oplus \Id$ to $U$.}
\end{proof}

%\begin{remark}
%In \cite{}, the authors asked for which levels of the Ershov hierarchy, there are equivalence relations lying properly at that level at  having sup at that level.	If we choose a finite \er in $\Delta^0_2$ and use a dark \er instead of $\Id$ in the argument, this shows that there are some.
%\end{remark}

%
%\smallskip
%
%\subsection{Defining $\I$ and $\F$}

We are now in position to show that $\bI$ is definable. 
%$We will show that an \er is in $\I$ if and only if it has a join with every other \er. We first show that if $E\in \I$, then $E$ has a least upper bound with every \er. Next we examine the collection of \ers which have joins with every finite \er.

\begin{thm}\label{thm: I is definable}
	$\bI$ is definable in $\ER$ as the collection of degrees which have least upper bounds with every other degree.
\end{thm}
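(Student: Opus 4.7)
The plan is to prove both inclusions of the claimed equivalence.

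For the inclusion $\bI \subseteq \{\mathbf{d} : \text{LUB with every }\mathbf{d}'\text{ exists}\}$, given $\Id_n \in \bI$ and any $R$, I will exhibit an explicit LUB. Set $k = \min(n, |\omega_R|)$ and let $U = R \oplus \Id_{n-k}$, with the convention $R \oplus \Id_0 \equivc R$. Then $U$ is an upper bound: $R \leqc U$ trivially, and $\Id_n \leqc U$ because $U$ has at least $n$ classes (the $k$ coming from $R$ plus the $n - k$ from $\Id_{n-k}$), into which $\Id_n$'s $n$ classes inject by a computable function. For minimality, I apply Lemma \ref{normal form for joins} to assume an arbitrary upper bound $T$ is a pure quotient of $\Id_n \oplus R$. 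Purity preserves the $n$ $\Id_n$-classes and allows at most $k$ identifications with $R$-classes, so $T$ contains at least $n - k$ un-identified $\Id_n$-classes. A reduction $U \leqc T$ sends $U$'s $R$-summand into $T$'s $R$-part and $U$'s $\Id_{n-k}$-summand, via a computable injection, into those un-identified $\Id_n$-classes.

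For the reverse inclusion, I show each $\mathbf{d} \notin \bI$ lacks a LUB with some $\mathbf{d}'$. If $\mathbf{d}$ is infinite (including $\mathbf{d} = \bId$), Corollary \ref{infNotInI} directly supplies such a witness. The main obstacle is the case $\mathbf{d} \in \bF \setminus \bI$, where Theorem \ref{F incomparable no join} does not apply: every finite $R$ satisfies $R \leq_F S$ trivially (take $A = \{0\}$ so $R \restriction A \equivc \Id_1 \leqc S$). Let $R$ represent $\mathbf{d}$ with classes $R_1, \ldots, R_k$. Observation \ref{ceers initial} yields a noncomputable class; and since the complement of a lone noncomputable class, being a union of finitely many computable sets, would itself be computable, $R$ has at least two noncomputable classes---call them $R_1, R_2$.

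My witness is $\mathbf{d}' = [E(Y)]$ for a c.e.\ noncomputable $Y$ whose $m$-degree is chosen generically so as to be incomparable with the $m$-degrees of $R_1, \ldots, R_k$ and their complements. By Lemma \ref{normal form for joins}, every upper bound of $\mathbf{d}, \mathbf{d}'$ is equivalent to a pure quotient of $R \oplus E(Y)$; I focus on the two $k$-class pure quotients $T_{(1,2)}$ and $T_{(2,1)}$, obtained by identifying $R_1$ with the $Y$-class and $R_2$ with the $\bar Y$-class (resp.\ the reverse), each of which is an upper bound. If a LUB $T^*$ existed, then $R \leqc T^*$ forces $T^*$ to have at least $k$ classes while $T^* \leqc T_{(1,2)}$ forces at most $k$, so $T^*$ has exactly $k$ classes and, by Lemma \ref{normal form for joins} together with a pigeonhole on the number of cross-identifications, is itself of the form $T_{(i,j)}$. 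The hard step will be to verify that the $m$-degree genericity of $Y$ forces $T_{(i,j)} \nleqc T_{(i',j')}$ whenever $(i,j) \neq (i',j')$: any class-bijection would, on the evens of the class containing $R_i$, extract an $m$-reduction of $R_i$ into $R_{i'}$, $Y$, or $\bar Y$, each ruled out by the choice of $Y$ (and, in the residual case $R_1 \equiv_m R_2$, a more delicate argument replaces $\mathbf{d}'$ with something like $[E(Y_1) \oplus E(Y_2)]$ to break the remaining symmetry). Then $T^* \leqc T_{(1,2)}$ and $T^* \leqc T_{(2,1)}$ force $T^* = T_{(1,2)} = T_{(2,1)}$, contradicting the distinctness of these pure quotients.
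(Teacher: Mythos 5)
Your forward direction and your handling of infinite degrees via Corollary \ref{infNotInI} match the paper's proof. The divergence is in the case $\mathbf{d}\in\bF\setminus\bI$, and there the proposal has a genuine gap.

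You replace the paper's $k$-class witness with the $2$-class witness $E(Y)$, and the decisive step becomes showing that distinct $k$-class pure quotients $T_{(i,j)}$ of $R\oplus E(Y)$ are pairwise $\leqc$-incomparable. But a class of $T_{(i,j)}$ is a \emph{union} such as $2R_i\cup(2Y+1)$, so when you restrict a reduction $g:T_{(i,j)}\leqc T_{(i',j')}$ to the evens of that class, what you extract is an $m$-reduction of $R_i$ into a \emph{union} like $R_{i'}\oplus Y$ or $R_{i'}\oplus\overline Y$ --- not into $R_{i'}$, $Y$, or $\overline Y$ alone. Reductions like $R_1\leq_m R_1\oplus\overline Y$ are trivially true, so your stated incomparability requirement on $Y$ (``$m$-incomparable with the $R_l$ and their complements'') is not enough. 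To make the argument close you would need, in addition, conditions of the form $Y\not\leq_m R_l\oplus\overline Y$, $\overline Y\not\leq_m R_l\oplus Y$, and $R_i\not\leq_m R_j\oplus Y$ for distinct noncomputable $R_i,R_j$. The first two are subtle to force because $\overline Y$ moves with $Y$; the third is simply \emph{impossible} whenever $R_i\leq_m R_j$, which forces you into the ad hoc ``replace $E(Y)$ by $E(Y_1)\oplus E(Y_2)$'' fix you gesture at but do not carry out. You have correctly identified this as ``the hard step,'' but it is left essentially open, and the residual-case fix raises the same union problem one level up.

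The paper's choice of witness $S$ is precisely what makes this go away. Taking $S$ to have exactly $k$ classes $X_1,\dots,X_k$ (each $m$-incomparable with every noncomputable $R$-class) forces any $k$-class pure quotient $T$ of $R\oplus S$ to collapse \emph{all} $k$ of the $R$-classes bijectively with the $S$-classes --- so in particular the noncomputable class $C$ is collapsed with some $X_i$. Then, instead of comparing two pure quotients (whose classes are unions), one reduces $T\leqc R\oplus S$ itself: each $T$-class maps into a \emph{single} $R\oplus S$-class, which is either a plain $R$-class or a plain $X_j$, and the $m$-degree argument then needs only the elementary facts $C\not\leq_m X_j$ and $X_i\not\leq_m K$ for $K\in\omega_R$. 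No union ever appears on the target side, and no extra hypotheses on the mutual $m$-degrees of the $R_i$ are needed. I would recommend replacing $E(Y)$ with a $k$-class partition of $\omega$ and running the reduction into $R\oplus S$ rather than into a second pure quotient.
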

\begin{proof}
	
	We first verify that every member of $\I$ has a least upper bound with every other \er.

\begin{lem}
	If $E\in \I$, then $E$ has a least upper bound with any \er $R$.
\end{lem}
\begin{proof}
	Let $E=\Id_k$. If $R$ has at least $k$ classes, then $R$ is the least upper bound of $\Id_k$ and $R$, as $\Id_k\leqc R$. Otherwise, let $n<k$ be $|\omega_R|$. We prove that $R\oplus \Id_{k-n}$ is the least upper bound of $\Id_k$ and $R$. 
First, it is immediate that both $R$ and $ \Id_k$ reduce to $R\oplus \Id_{k-n}$. Next, suppose that $R$ and $\Id_k$ are reducible to some $S$ and let $f\colon  R\leqc S$. Then, $f$ can only hit $n$ equivalence classes of $S$, but $|\omega_S|\geq k$ because $\Id_k \leqc S$. Let $A=\set{a_1,\ldots, a_{k-n}}$ be a set of representatives from $k-n$  equivalence classes which  $f$ avoids. By letting $g$ agree with $f$ on elements from $R$ and send the classes of $\Id_{k-n}$ to the numbers in $A$, we get a reduction $g\colon  R\oplus \Id_{k-n} \leqc S$.
%Let  $g: \Id_{k-n} \leqc \Id \restriction A$. Then $f\oplus g$ reduces $R\oplus \Id_{k-n}$ to $S$.
\end{proof}

Corollary 
\ref{infNotInI} 
guarantees that no infinite equivalence relation can have a least upper bound with every other \er. So, to prove the theorem, it suffices to show that the same is true for any  finite equivalence relation which is noncomputable.

%FROM URI: The previous version of this Lemma let $S$ have 2 classes, but this ignored the possibility that $R$ has computable classes which might be able to collapse to these 2 classes. I replaced it by making $S$ have $k$-classes, therefore even non-computable $R$-classes are forced to collapse with $S$-classes. Of course, this also just follows from the NCA argument below, since obviously no element of $\F\smallsetminus \I$ is NCA. There might be a way to re-order so we can just reference that?
We note that the following lemma also follows from Theorem \ref{NCA} below, but we include a proof here for self-containment of this section.
%FROM URI SHould we move the NCA section up to be the first thing? Then we could use it here? Seems like overkill to me, so I'm not doing so now. 

\begin{lem}
	If $R\in \F\setminus \I$, then there is $S\in \F\setminus \I$ so that $R$ and $S$ do not have a least upper bound.
\end{lem}

\begin{proof}
	Let  $|\omega_R|=k$, and since $R\notin \I$, fix $C$ to be a noncomputable $R$-class. Let $\omega = X_1\cup \cdots \cup X_k$ be a partition of $\omega$ so that each $X_i$ is $m$-incomparable with all noncomputable $Y\in \omega_R$. Next, let $S$ be the \er with classes $X_i$ for $i\leq k$.  
	% Let $a_1\cancel{R}a_2$ and, without loss of generality, assume that $0\in X$ and $1\notin X$. 
	%Finally, let $S$ be $E(X)$, i.e., the equivalence relation with exactly two classes: $X$ and and its complement.
	 Towards a contradiction, suppose that $T$ is a least upper bound of $R$ and $S$. We may assume that $T$ is a pure quotient $R\oplus S_{/A}$ by Lemma \ref{normal form for joins}.
	
	First, observe that $T$ has exactly $k$ classes: if there were fewer, then  $R\nleqc T$; if there were more, then we can take $Z$ to be a pure quotient of $R \oplus S$ which has exactly $k$ classes and we would have $T\not\leqc Z$. Thus $C$ is collapsed via $A$ with some class $X_i$ in $T$.
	%(e.g., $R\oplus S/_{\{(2\cdot a_1,1),(2\cdot a_2,3)\}}$). 

	% Consider $E(X)$, i.e., the equivalence relation with two classes: $X$ and $\omega\setminus X$. Suppose towards a contradiction that $E$ is a least upper bound of $F_1$ and $F_2$.
	
%	Let $a_1,a_2$ be in two different $F_1$-classes, and suppose $0\in X$ and $1\notin X$ Let $R=F_1\oplus F_2/\{(2\cdot a_1,1),(2,a_2,3)\}$. That is, we take the uniform join of $F_1$ and $F_2$ and we collapse the class of $X$ in $F_2$ with the class of $a_1$ in $F_1$ and the complement of $X$ in $F_2$ with the class of $a_2$ in $F_1$. This is an \er with the same number of classes as $F$ yet is an upper bound for $F_1$ and $F_2$. Thus $E$ must have $k$ classes. 
	
	Now, let $f\colon T\leqc R\oplus S$,
	  and consider the image of $C$ in the composed reduction $R\leqc R\oplus S_{/A}\leqc R\oplus S$. Since $C\not\leq_m X_j$ for any $j\leq k$, the image must be contained in the evens.
	 Similarly, consider the image of $X_i$ under the composed reduction $S\leqc R\oplus S_{/A}\leqc R\oplus S$. Since $X_i\not\leq_m K$ for any $K\in \omega_R$, the image must be contained in the odds. But $C$ and $X_i$ are $A$-collapsed in $T$, which contradicts $f$ being a reduction.
\end{proof}

This completes the proof of Theorem \ref{thm: I is definable}.
\end{proof}

The next corollary immediately follows from the definability of $\bI$.

\begin{cory}\label{definable cory} For all $k$,
	\begin{itemize}
		\item $\bId_k$ is definable as the unique degree in $\bI$ which has exactly $k-1$ predecessors;
		\item $\bF_k$ is definable in $\ER$ as the degrees which bound $\bId_k$ and not $\bId_{k+1}$;
		\item $\bF$ is definable in $\ER$ as the degrees which do not bound every member of $\bI$.
	\end{itemize}
\end{cory}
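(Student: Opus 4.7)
The plan is to leverage the already-established definability of $\bI$ (Theorem \ref{thm: I is definable}) together with the initial segment structure of Observation \ref{ceers initial}, and one routine observation: $\Id_n\leqc R$ if and only if $R$ has at least $n$ equivalence classes. The nontrivial direction of this observation just hardcodes $n$ representatives of distinct $R$-classes into a computable function. Once this is in hand, each of the three bullets is a short bookkeeping step.

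For the first bullet, since $\bI = \{\bId_1 < \bId_2 < \cdots\}$ forms an initial chain of $\ER$ below $\bId$ by Observation \ref{ceers initial}, the strict $\ER$-predecessors of $\bId_k$ are exactly $\bId_1, \ldots, \bId_{k-1}$, so $\bId_k$ has precisely $k-1$ predecessors in $\ER$, and it is the unique member of $\bI$ with this count because predecessor counts along the chain are strictly increasing. Since $\bI$ is $\ER$-definable, the sentence ``$\bd \in \bI$ and $\bd$ has exactly $k-1$ predecessors'' is first-order and defines $\bId_k$.

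For the second bullet, the routine observation above yields that ``$\bd \geq \bId_k$ and $\bd \not\geq \bId_{k+1}$'' translates exactly to ``any $R \in \bd$ has at least $k$ and fewer than $k+1$ classes,'' i.e., $\bd \in \bF_k$; this is a first-order condition because each $\bId_j$ has just been shown to be definable. Similarly for the third bullet, $R$ is finite if and only if $\Id_n \not\leqc R$ for some $n$, so $\bd \in \bF$ if and only if $\bd$ fails to bound some element of $\bI$, which is again first-order by the definability of $\bI$.

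I do not anticipate any genuine obstacle here: the heavy lifting is in Theorem \ref{thm: I is definable}, and the present corollary simply extracts named degrees and classes from it using the chain structure of $\bI$. The one minor sanity check is that counting $\ER$-predecessors and $\bI$-predecessors of $\bId_k$ yields the same number $k-1$; this is immediate from Observation \ref{ceers initial}, since every strict predecessor of $\bId_k$ lies in the computable initial segment of $\ER$.
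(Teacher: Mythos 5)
Your proof is correct and is exactly the argument the paper leaves implicit (the paper only says the corollary "immediately follows from the definability of $\bI$"). The key observation you state — that $\Id_n\leqc R$ iff $R$ has at least $n$ classes, combined with the chain structure of $\bI$ from Observation \ref{ceers initial} and the fact that $\bI\cup\{\bId\}$ is an initial segment (so predecessor-counting in $\ER$ agrees with predecessor-counting in $\bI$) — is precisely what makes the three bullets routine once $\bI$ is definable.
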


\subsection{Defining the identity}\label{sec:NCA}

In this section, we give a combinatorial characterization for the degrees which have least upper bounds with every member of $\bF$. We will then use this analysis to give a definition of the degree $\bId$ (and thus $\bLight$ and $\bDark$) in $\ER$ as a combination of its minimality over $\bF$ along with the property of having least upper bounds with every degree in $\bF$. 

%We note that neither condition alone suffices. The first because there are dark minimal ceers. The second because there  are infinitely many degrees which have a join with every
%member of $\F$, as follows from our characterization below. 

We will need the following combinatorial lemma:

\begin{lem}\label{avoid computable classes from a uniformly computable sequence of computable classes}
	Let $R$ be an \er with a uniformly computable sequence $(C_i)_{i\in \omega}$ of distinct computable $R$-classes. Let $S\subset \omega$ be a finite set. Then there is a reduction of $R$ to itself which avoids every $C_i$ for $i\in S$.
\end{lem}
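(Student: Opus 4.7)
The plan is to first \emph{rearrange} the given sequence $(C_i)$ into a new sequence $(C'_i)$ with two useful properties: (a) the classes $C_s$ for $s\in S$ occupy the initial indices $0,1,\dots,|S|-1$, and (b) $\min C'_i\geq i$ for every $i$. Property (b) will make membership in the new sequence decidable by \emph{bounded} search, and a straight shift of the form $i\mapsto i+|S|$ will then yield a computable reduction whose image avoids exactly the first $|S|$ classes of the new sequence, which correspond to $\{C_s : s\in S\}$.

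The first step is to construct a computable injection $j:\omega\to\omega$ with $S\subseteq \operatorname{range}(j)$ and $\min C_{j(i)}\geq i$ for all $i$. The key combinatorial observation is that, since the $C_k$ are pairwise disjoint, the values $\{\min C_k : k\in\omega\}$ are pairwise distinct natural numbers, so at most $i$ of them can lie below $i$. Writing $S=\{s_1,\dots,s_n\}$ sorted so that $\min C_{s_1}<\cdots<\min C_{s_n}$, this forces $\min C_{s_k}\geq k-1$; set $j(k-1):=s_k$ for $1\leq k\leq n$. For $i\geq n$, take $j(i)$ to be the least index outside $\{j(0),\dots,j(i-1)\}$ with $\min C_{j(i)}\geq i$; the counting argument above guarantees such an index exists and can be found computably (compute $\min C_k$ by searching $C_k$ until a witness appears, then test the inequality).

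Now set $C'_i:=C_{j(i)}$ and $N:=n$, and define
\[
f(x)=\begin{cases}\min C'_{i+N} & \text{if there exists (a unique) } i\leq x \text{ with } x\in C'_i,\\ x & \text{otherwise.}\end{cases}
\]
Because the $C'_i$ are uniformly computable and only $i\leq x$ need be checked, $f$ is total computable. The crucial point is that $\min C'_i\geq i$ implies $x\in C'_i \Rightarrow i\leq x$, so the bounded search correctly decides whether $x\in\bigcup_i C'_i$.

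The verification is then a short case analysis. If $x\in C'_i$ and $y\in C'_\ell$, then $xRy\iff i=\ell \iff f(x)Rf(y)$. If $x\in C'_i$ and $y\notin \bigcup_i C'_i$, then $xRy$ fails and $f(x)Rf(y)$ also fails since $y\notin C'_{i+N}$. If neither lies in $\bigcup_i C'_i$, $f$ acts as the identity on $\{x,y\}$. Finally, $\operatorname{range}(f)\subseteq \bigcup_{i\geq N}C'_i \cup (\omega\setminus \bigcup_i C'_i)$, which is disjoint from $\bigcup_{i<N}C'_i = \{C_s : s\in S\}$ by construction of $j$. The main obstacle is precisely the rearrangement: without property (b), deciding $x\in\bigcup_i C_i$ is in general only c.e., so the naive ``shift everywhere, identity elsewhere'' definition fails to be total computable. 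The minimum-at-least-index trick, enabled by the disjointness of equivalence classes, is what makes the construction go through.
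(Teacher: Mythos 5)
Your proof is correct, and it takes a genuinely different route from the paper's. The paper builds the reduction $f$ by a stage-by-stage construction: it maintains a growing set $X_s$ of ``reserved'' indices, sends any newly encountered minimum of a reserved class to the minimum of a fresh unreserved class (which then gets reserved in turn), copies values for non-minimum elements, and is the identity elsewhere; the verification is then an induction showing each partial $f_s$ is a partial reduction avoiding the target classes. You instead do all the bookkeeping \emph{upfront}: you computably reindex the sequence into $(C'_i)$ so that $\min C'_i \geq i$ (exploiting that pairwise-disjoint classes have pairwise-distinct minima) and so that the classes to be avoided occupy indices $0,\dots,N-1$, after which a single uniform shift $C'_i\mapsto C'_{i+N}$ does the job. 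The payoff of your reorganization is that the predicate $x\in\bigcup_i C'_i$ becomes decidable by a \emph{bounded} search over $i\leq x$, which is exactly what the paper's dynamic reservation mechanism is working around; in exchange, you pay a small price in the reindexing lemma. Both proofs rest on the same underlying resources (disjointness of the classes, uniform computability, and infinitely many spare classes to shift into), but your static ``rearrange-then-shift'' argument is cleaner to verify than the paper's inductive construction, and would be my preference on readability grounds.
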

\begin{proof}
	We construct the reduction $f\colon R\leqc R$ in stages. At every stage $s$, we will construct a partial function $f_s$ and a parameter $X_s$, which will be a finite subset of $\omega$. At stage $s+1$, we will ensure $f_{s+1}(s)$ is defined. 
	
	\subsubsection*{Stage $0$} Let $f_0=\emptyset$ and $X_0=S$.
	
	\subsubsection*{Stage $s+1$} We distinguish three cases.
	\smallskip
	
	\begin{enumerate}	 
	\item If $s\notin \bigcup_{n\in X_s} C_n$, let $f_{s+1}:=f_s\cup \{(s,s)\}$ and let $X_{s+1}:=X_s$. 
	
	\item $s\in C_n$ for some $n\in X_s$ and there is some $k<s$ in $C_n$. Then let $f_{s+1}:= f_s\cup \{(s,f(k))\}$ and $X_{s+1}:=X_s$.
	
	\item $s\in C_n$ for $n\in X_s$ and $s$ is $\min(C_n)$. Then let $m$ be least so that $m\notin X_s$ and $\ran f_s\cap C_m=\emptyset$. Let 
$f_{s+1}:=f_s\cup \{(s,\min C_m)\}$
	 and let $X_{s+1}:=X_s\cup m$.
\end{enumerate}
\smallskip

%\uri{Below we use $X_s$ seemingly for the union of the set of classes $C_i$ for $i\in X_s$. I'm replacing these $X_s$'s by $Y_s$'s as defined below.}

	We argue by induction that every $f_s$ is a partial reduction of $R$ to itself and no member of $C_n$, for $n\in S$, is in the range of $f_s$. For each $s$, let $Y_s=\bigcup_{i\in X_s}C_i$. We note that $f_s$ is the identity on $\overline{Y_s}$ and $\ran(f_s\restriction Y_s)\subseteq Y_s$, so we only need to show that $a\rel{R} b \leftrightarrow f_s(a)\rel{R} f_s(b)$ for $a,b\in Y_s$ and that no element of $Y_s$ is sent into a class $C_n$ for $n\in S$. Note that when a number $n$ first enters $X_k$ for $k<s$, then $C_n$ is neither in the domain nor range of $f_{k-1}$. Thus, for every $n\in X_s$, case $(2)$ ensures that each class is sent via $f$ to the same location. That is, $a \rel{R} b\rightarrow f(a)\rel{R} f(b)$ for $a,b\in Y_s$. In case $(3)$, we define $f$ for an element of a class $C_i$ with $i\in X_s$, and note that we send it to a class which is not in the range of $f_s$. Thus, if $a\rel{\cancel{R}} b$ then $f_s(a)\rel{\cancel{R}} f_s(b)$ for $a,b\in Y_s$. Similarly, note that in case $(3)$, we only send these new classes to classes $C_m$ for $m$ outside of $X_s$. In particular, $m\notin X_0$, so we never put $C_m$ for $m\in S$ into the range of $f_s$.
\end{proof}

The next lemma identifies a natural way in which a uniformly computable sequence of computable classes may arise.

\begin{lem}\label{computable class to a non-computable class gives a uniformly computable sequence of computable classes}
	Let $f\colon R\leqc R$ and let $C\in \omega_R$ be a computable $R$-class. Suppose that $f^\star(C)$ is not a computable $R$-class. Then either there is some $i\in \omega$ so that $f^{(i)}$ avoids $C$ or there is a uniformly computable sequence of distinct computable $R$-classes $(C_i)_{i\in \omega}$.
\end{lem}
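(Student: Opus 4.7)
The strategy is to look at the pre-images of $C$ under the iterates of $f$, and either find an iterate that misses $C$ (the first disjunct) or extract a uniformly computable sequence of distinct computable classes from the backwards orbit of $C$. The key observation is that since $f\colon R\leqc R$, the induced map $f^\star$ on $\omega_R$ is injective: if $f^\star([x])=f^\star([y])$ then $f(x)\rel{R} f(y)$, whence $x\rel{R} y$.

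Assume the first disjunct fails, so $f^{(i)}$ hits $C$ for every $i$. Define $C_0:=C$ and $C_{i+1}:=f^{-1}[C_i]$ inductively. Three facts should be checked, each essentially by induction on $i$:
\begin{enumerate}
\item[(a)] \emph{$C_i$ is a union of $R$-classes.} If $x\in C_{i+1}$ and $x\rel{R} y$, then $f(x)\rel{R} f(y)$, so $f(x)\in C_i$ implies $f(y)\in C_i$.
\item[(b)] \emph{$C_i$ is a single (nonempty) $R$-class.} Nonemptiness follows from the assumption that $f^{(i)}$ hits $C$ (any $x$ with $f^{(i)}(x)\in C$ lies in $C_i$). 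That it is a single class follows from injectivity of $f^\star$: any two $R$-classes in $C_{i+1}$ would be sent by $f^\star$ to the same class $C_i$.
\item[(c)] \emph{$C_i$ is computable, uniformly in $i$.} This is immediate by induction, since $\chi_{C_{i+1}}(x)=\chi_{C_i}(f(x))$ and $f$ is total computable.
\end{enumerate}

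It remains to check that the $C_i$ are pairwise distinct, and this is the only subtle point—indeed, it is where the hypothesis that $f^\star(C)$ is not computable is used. By construction, $f$ maps $C_{i+1}$ into the single class $C_i$, so $f^\star(C_{i+1})=C_i$. If $C_i=C_j$ with $i<j$, apply $f^{\star i}$ to both sides and use injectivity to conclude $C=C_0=C_{j-i}$. Setting $k:=j-i\geq 1$, we then have $f^\star(C)=f^\star(C_k)=C_{k-1}$, where $C_{k-1}$ is either $C$ itself (when $k=1$) or a member of the uniformly computable sequence from (c). In either case $f^\star(C)$ would be a computable $R$-class, contradicting the hypothesis. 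Hence the $C_i$ are distinct, giving the second disjunct.

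The proof has no deep step; the main thing to be careful about is item (b), namely using the injectivity of $f^\star$ to ensure that each preimage $C_i$ is actually a single $R$-class (rather than merely a computable union of classes), since the distinctness argument and the statement of the conclusion both require individual classes.
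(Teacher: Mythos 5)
Your proof is correct and follows essentially the same route as the paper: define $C_i$ as the $i$-fold preimage of $C$, note uniform computability, and derive a contradiction from $C_i=C_j$ by pushing forward with $f^\star$ to reach $C=C_k$ for some $k\geq 1$, whence $f^\star(C)=C_{k-1}$ is computable. One small inaccuracy: at the distinctness step you invoke injectivity of $f^\star$, but none is needed there --- applying $f^\star$ to $C_i=C_j$ yields $C_{i-1}=C_{j-1}$ directly from the chain identity $f^\star(C_{m+1})=C_m$; injectivity is needed only where you correctly flag it in item (b), to guarantee each preimage is a single class.
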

\begin{proof}
	Suppose that there is no $i\in \omega$ so that $f^{(i)}$ avoids $C$, and let $C_i=\{x:f^{(i)}(x)\in C\}$. It is immediate that this is a uniformly computable sequence of computable classes. We need only verify that they are distinct. Suppose that $C_i=C_j$ with $i<j$. Further, suppose that $i$ is minimal for such an example. Then $i=0$, as otherwise, we would have $C_{i-1}=C_{j-1}$ since $f$ is a reduction of $R$ to $R$. Thus we have some $C_{j}=C_0=C$. But then $f^\star(C)=f^\star(C_j)=C_{j-1}$ is computable, contrary to hypothesis.
\end{proof}

Putting the previous two lemmas together, we get a general result about avoiding classes.
\begin{cory}\label{non-comp classes with comp preimages are avoidable}
	If $f\colon R\leqc R$ and $C\in \omega_R$ is a computable $R$-class so that $f^\star(C)$ is a noncomputable $R$-class $D$, then there is some reduction $g\colon R\leqc R$ so that $g$ avoids $C$. Thus also $f\circ g$ avoids $D$.
\end{cory}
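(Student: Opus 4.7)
The plan is to apply Lemma \ref{computable class to a non-computable class gives a uniformly computable sequence of computable classes} to the given $f$ and $C$, which presents us with two cases.

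In the first case, some iterate $f^{(i)}$ avoids $C$, in which case we simply take $g := f^{(i)}$, since each iterate of $f$ is itself a reduction of $R$ to $R$.

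In the second case, Lemma \ref{computable class to a non-computable class gives a uniformly computable sequence of computable classes} gives us a uniformly computable sequence $(C_i)_{i\in\omega}$ of distinct computable $R$-classes; moreover, inspecting the proof of that lemma, the sequence is defined by $C_i = \{x : f^{(i)}(x) \in C\}$, so in particular $C_0 = C$. We then invoke Lemma \ref{avoid computable classes from a uniformly computable sequence of computable classes} with the finite set $S = \{0\}$ to produce a reduction $g\colon R \leqc R$ that avoids $C_0 = C$.

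For the final sentence, we observe that whenever $g\colon R\leqc R$ avoids $C$, the composition $f\circ g$ must avoid $D = f^\star(C)$. Indeed, if $f(g(x)) \in D$ for some $x$, then since $f$ is a reduction of $R$ to $R$ and $f$ maps some element of $C$ into $D$, the equivalence $f(g(x)) \rel{R} f(c)$ (for any $c \in C$) would force $g(x) \rel{R} c$, i.e., $g(x) \in C$, contradicting that $g$ avoids $C$. The only genuine work beyond bookkeeping is the appeal to the two preceding lemmas, so there is no real obstacle here—the corollary is essentially a packaging of Lemmas \ref{avoid computable classes from a uniformly computable sequence of computable classes} and \ref{computable class to a non-computable class gives a uniformly computable sequence of computable classes} together with the elementary observation that reductions pull classes back to classes.
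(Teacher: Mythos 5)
Your proof is correct and follows essentially the same path as the paper's: split into the two cases furnished by Lemma~\ref{computable class to a non-computable class gives a uniformly computable sequence of computable classes}, take $g = f^{(i)}$ in the first, and invoke Lemma~\ref{avoid computable classes from a uniformly computable sequence of computable classes} in the second. Your added observations---that $C_0 = C$ so one may take $S=\{0\}$, and that injectivity of $f^\star$ on classes forces $f\circ g$ to avoid $D$---are details the paper leaves implicit.
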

\begin{proof}
	We have two cases from Lemma \ref{computable class to a non-computable class gives a uniformly computable sequence of computable classes}. The first possibility is that $g=f^{(i)}$ avoids $C$ for some $i$. The second possibility is that there is a uniformly computable sequence of distinct computable $R$-classes $(C_i)_{i\in \omega}$. Then we can apply Lemma \ref{avoid computable classes from a uniformly computable sequence of computable classes} to give a reduction $g$ which avoids $C$. Then $f\circ g$ avoids $D$.
\end{proof}

We now present the combinatorial condition which we will show is equivalent to having a least upper bound with every member of $\F$.

\begin{defn}
	An \er $R$ is \emph{\NCA} if, for every finite collection $\mathcal{C}$ of noncomputable equivalence classes of $R$, there is a reduction $f\colon  R\leqc R$  which avoids all the equivalence  classes in $\mathcal{C}$.
\end{defn}

First we observe that avoiding any one noncomputable class is equivalent to avoiding any finite set of noncomputable classes. 

\begin{lem}\label{avoiding one makes you nca}
	Let $R$ be an \er so that for any noncomputable class $C$, there is a reduction of $R$ to itself that avoids $C$. Then $R$ is \NCA.
\end{lem}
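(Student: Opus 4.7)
The plan is to induct on $n = |\mathcal{C}|$, with the base case $n = 1$ being exactly the hypothesis of the lemma. For the inductive step, suppose the statement holds for all collections of size $k$, and let $\mathcal{C} = \{C_1, \ldots, C_{k+1}\}$ be a collection of $k+1$ noncomputable $R$-classes. I would first apply the inductive hypothesis to fix a reduction $f_0 \colon R \leqc R$ avoiding $C_1, \ldots, C_k$, and then try to post-compose $f_0$ with a suitable $g \colon R \leqc R$ so that $f_0 \circ g$ also avoids $C_{k+1}$.

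The key observation is that $\range(f_0 \circ g) \subseteq \range(f_0)$, so any such composition automatically continues to avoid $C_1, \ldots, C_k$. To make $f_0 \circ g$ additionally avoid $C_{k+1}$, it suffices to choose $g$ avoiding the unique $R$-class $D := (f_0^\star)^{-1}(C_{k+1})$, if such a class exists (it is unique because $f_0^\star$ is injective); if $C_{k+1}$ is not in $\range(f_0^\star)$, then $f_0$ itself already avoids $C_{k+1}$ and we are done.

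To produce such a $g$, I would split on whether $D$ is computable. If $D$ is noncomputable, the hypothesis of the lemma directly supplies a reduction $g$ avoiding $D$. If $D$ is computable, then since $f_0^\star(D) = C_{k+1}$ is noncomputable, Corollary \ref{non-comp classes with comp preimages are avoidable} applied to $f_0$ and $D$ furnishes a reduction $g$ avoiding $D$. In either case, $g(x) \notin D$ for all $x$ forces $f_0(g(x)) \notin C_{k+1}$, so $f_0 \circ g$ avoids $C_1, \ldots, C_{k+1}$, closing the induction.

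The main subtlety, and the reason Corollary \ref{non-comp classes with comp preimages are avoidable} is needed rather than just the lemma's hypothesis, is that $D$ can be computable even though $C_{k+1}$ is not: Lemma \ref{f respects m-degrees} yields $D \leq_m C_{k+1}$ but not the converse, so a noncomputable class can perfectly well have a computable preimage under a reduction. The hypothesis alone supplies reductions avoiding \emph{noncomputable} classes only, so the computable-preimage case is precisely where the machinery already packaged in Corollary \ref{non-comp classes with comp preimages are avoidable} becomes essential.
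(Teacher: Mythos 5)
Your proposal is correct and follows essentially the same approach as the paper's proof: induct on the number of classes, obtain a reduction avoiding all but one class by the inductive hypothesis, then post-compose with a second reduction that avoids the preimage of the remaining class, splitting into cases according to whether that preimage is nonexistent, noncomputable (use the hypothesis), or computable (use Corollary~\ref{non-comp classes with comp preimages are avoidable}).
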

\begin{proof}
	We proceed by induction on $k$ to show that for any set of size $k$ of noncomputable classes, there is a reduction of $R$ to itself which avoids every class in the set. For $k=0$, the claim is trivial. For $k=1$, the claim follows from the  assumption about $R$.
	
	Next, for $k>1$, let $S=\{C_1,\ldots C_{k+1}\}$ be a collection of noncomputable classes. By inductive hypothesis, there is a reduction $f\colon R\leqc R$ which avoids $C_2,\ldots, C_{k+1}$. We consider three cases depending on what type of class is sent to $C_1$ via $f$: If there is no class sent to $C_1$ via $f$, then $f$ avoids every class in $S$. If there is a noncomputable class $X$ sent via $f$ to $C_1$, then by assumption there is a reduction $g\colon R\leqc R$ which avoids $X$. Then $f\circ g$ avoids every class in $S$. Lastly, if a computable class $X$ is sent via $f$ to $C_1$, then Corollary \ref{non-comp classes with comp preimages are avoidable} shows that there is a reduction $g\colon R\leqc R$ which avoids $X$. Then $f\circ g$ avoids every class in $S$.
\end{proof}

Next, we show that the property of noncomputable avoidance is  degree invariant.

\begin{obs}\label{NCA is a degree property}
If $R$ is noncomputably avoiding and $R\equivc S$, then $S$ is also noncomputably avoiding.
\end{obs}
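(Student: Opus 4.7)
The plan is to invoke Lemma~\ref{avoiding one makes you nca}: it suffices to show that for every single noncomputable class $X\in\omega_S$, there is a reduction of $S$ to itself that avoids $X$. Fix reductions $f\colon R\leqc S$ and $g\colon S\leqc R$ witnessing $R\equivc S$, and let $X\in\omega_S$ be noncomputable. The natural candidate for a self-reduction of $S$ avoiding $X$ is $h:=f\circ h_R\circ g$ for a suitably chosen self-reduction $h_R$ of $R$. Observe that if $h$ were to hit $X$, say $f^\star(h_R^\star(g^\star(X')))=X$ for some $X'\in\omega_S$, then injectivity of $f^\star$ would force $h_R^\star(g^\star(X'))=(f^\star)^{-1}(X)$. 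So what I need is for $h_R$ to avoid the $R$-class $(f^\star)^{-1}(X)$, if any.

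Accordingly, I split into cases on $Z:=(f^\star)^{-1}(X)$. If $Z$ is undefined (i.e.\ $X$ is not in the image of $f^\star$), then $f\circ g\colon S\leqc S$ already avoids $X$ and we are done. Otherwise $Z$ is the unique $R$-class sent to $X$ by $f^\star$. If $Z$ is noncomputable, I apply the NCA-property of $R$ to the singleton $\{Z\}$ to produce $h_R\colon R\leqc R$ avoiding $Z$, and then the argument in the previous paragraph shows $h=f\circ h_R\circ g$ avoids $X$.

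The main obstacle is the subcase where $Z$ is computable, since then NCA of $R$ does not directly yield a self-reduction of $R$ avoiding $Z$. The trick here is to look at the self-reduction $g\circ f\colon R\leqc R$, which satisfies $(g\circ f)^\star(Z)=g^\star(X)$. By Lemma~\ref{f respects m-degrees}, $X\leq_m g^\star(X)$, so $g^\star(X)$ is noncomputable. Thus $g\circ f$ sends the computable class $Z$ to a noncomputable class, and Corollary~\ref{non-comp classes with comp preimages are avoidable} applied to this self-reduction delivers $h_R\colon R\leqc R$ that avoids $Z$. With $h_R$ in hand, the same cancellation argument as before shows that $h:=f\circ h_R\circ g\colon S\leqc S$ avoids $X$, completing the verification of the hypothesis of Lemma~\ref{avoiding one makes you nca} for $S$.
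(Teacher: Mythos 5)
Your proof is correct and follows essentially the same route as the paper: reduce to avoiding a single noncomputable class via Lemma~\ref{avoiding one makes you nca}, split on whether the $f^\star$-preimage $Z$ of the target class exists and is computable, and in the computable subcase use $g\circ f$ together with Corollary~\ref{non-comp classes with comp preimages are avoidable}. The only cosmetic difference is that the paper leaves the appeal to Lemma~\ref{avoiding one makes you nca} implicit, whereas you state it explicitly.
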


\begin{proof}
Let $S$ be  equivalent to some noncomputably avoiding $R$ via $f\colon R \leqc S$ and $g\colon  S \leqc R$. Given any noncomputable $S$-class $C$, we need to build $h\colon  S\leqc S$ such that $h$ avoids $C$.

If $C\notin \ran(f^\star)$, then $f\circ g$ is a reduction of $S$ to itself which avoids $C$. So, let $K$ be an $R$-class so that $f^\star(K) = C$.  It suffices to find a reduction $\ell$ of $R$ to itself avoiding $K$. Once we have this, $h= f\circ \ell \circ g$ is a reduction of $S$ to itself avoiding $C$.

If $K$ is noncomputable, then we use the hypothesis that $R$ is \NCA to give the reduction $\ell$, and we are done. So, suppose $K$ is computable. Observe that $g\circ f\colon R\leqc R$ and ${(g\circ f)}^{\star}(K)$ is not computable because $C$ is not computable. Thus, we can apply Corollary~\ref{non-comp classes with comp preimages are avoidable} to get a reduction $\ell$ of $R$ to itself avoiding the class $K$.
%
%Let $h:R\leqc R$ avoid every non-computable member of $\mathcal{K}_S$. We now consider $X=\{\mu_{h}^{-1}(C) :  C\in \mathcal{K}_S\}$. If $X$ is empty, then we are done. Otherwise, note that $X$ can only contain computable classes. In particular, these are the $h$-preimages of the computable classes in $\mathcal{K}_S$. Observe that $g\circ f\circ h$ must send these to non-computable $R$-classes, since every member of $\mathcal{C}_S$ is non-computable. Then we can apply Lemmas \ref{computable class to a non-computable class gives a uniformly computable sequence of computable classes} and \ref{avoid computable classes from a uniformly computable sequence of computable classes} to get a reduction $j$ of $R$ to itself that avoids every member of $X$. Then 
%$h\circ j$ is a reduction of $R$ to itself avoiding every class in $\mathcal{K}_S$, as needed.
%%Let $\mathcal{C}_R \subseteq \omega_R$ be the the preimage under $\mu_g$ of $\mathcal{C}_S$. Since $R$ is noncomputably avoiding, there is  $h\colon  R\leqc R$  such that $h$ avoids $\mathcal{C}_R$. To conclude, it is enough to take as $g$ the function $f_2 \circ h \circ f_1$. 
\end{proof}

Noncomputably avoiding \ers exist. For instance, any \er having all computable classes (and note that there are dark \ers with this property, see e.g.\ \cite[Lemma~3.4]{fokina2018measuring} or \cite[Prop.~5.6]{gao2001computably}) is obviously noncomputably avoiding. A less trivial example is provided by the following observation. 

\begin{obs}
The degree of universal ceers is noncomputably avoiding.
\end{obs}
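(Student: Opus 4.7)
My plan is to invoke Lemma \ref{avoiding one makes you nca}, which reduces the task to the following: for any universal ceer $U$ and any noncomputable class $C \in \omega_U$, exhibit a reduction $f \colon U \leqc U$ avoiding $C$. By Observation \ref{NCA is a degree property}, noncomputable avoidance is degree-invariant, so it suffices to verify this for some fixed universal ceer representative of the degree. In fact, the construction I have in mind will avoid \emph{any} single class, computable or not, so noncomputability of $C$ is not actually used here.

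The key tool is simply universality applied to $U \oplus U$. Since $U \oplus U$ is a ceer, there is a reduction $h \colon U \oplus U \leqc U$. Composing $h$ with the two natural inclusions $x \mapsto 2x$ and $x \mapsto 2x+1$ of $U$ into the even, resp.\ odd, side of $U \oplus U$, I obtain two reductions
\[
h_0(x) := h(2x), \qquad h_1(x) := h(2x+1)
\]
of $U$ into itself.

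The crucial observation is that the induced maps $h_0^\star$ and $h_1^\star$ on $\omega_U$ have disjoint images. This is immediate from $h$ being a reduction of $U \oplus U$ to $U$: every even class of $U \oplus U$ is distinct from every odd class, so their $h^\star$-images must be distinct $U$-classes. Consequently, the single class $C$ lies in the range of at most one of $h_0^\star, h_1^\star$, and the other of the two reductions avoids $C$, as required.

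There is no real obstacle; the entire argument rests on universality giving $U \oplus U \leqc U$ together with the observation that this splits $\omega_U$ into two $U$-disjoint pieces. The only thing worth stating carefully is the disjointness of ranges of $h_0^\star$ and $h_1^\star$, which is where the definition of $\oplus$ does the work.
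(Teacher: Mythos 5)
Your proof is correct and uses essentially the same idea as the paper: both arguments exploit $U \oplus U \equivc U$ and the fact that the even and odd sides of $U\oplus U$ are never identified, so a reduction from $U\oplus U$ into $U$ splits $\omega_U$ into two disjoint pieces, one of which misses any given class. The paper phrases it by working on $V=U\oplus U$ and then invoking degree-invariance (Observation \ref{NCA is a degree property}), whereas you precompose with the two inclusions $x\mapsto 2x$, $x\mapsto 2x+1$ to land the argument directly on $U$ itself -- a minor repackaging of the same core observation.
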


\begin{proof}
Let $U$ be a universal ceer. Let $V=U\oplus U$ and note that $V\equivc U$ since $V$ is also a ceer. Any noncomputable class $C$ is either contained in $\evens$ or $\odds$. So, we can reduce $V$ to the copy of $U$ on the $\odds$ or, respectively, $\evens$ of $V$. This gives a reduction of $V$ to itself avoiding the class $C$. Thus, $V$ is \NCA by Lemma \ref{avoiding one makes you nca} and $U$ is \NCA by Lemma \ref{NCA is a degree property}.
%Denote by $V$ the cylindrification of $U$, i.e.,
%\[
%\langle i,x\rangle \rel{V} \langle j,y\rangle \Leftrightarrow (i=j \mbox{ \& }  x\rel{U}y).
%\]
%
%$V$ is obviously a ceer and therefore $V\leqc U$. On the other hand, the map $x\mapsto \langle 0	,x\rangle$ defines a reduction from $U$ to $V$. So, it suffices to show that $V$ is noncomputably avoiding. Observe that each column of $V$ is a universal ceer. Thus,  for any given finite collection $\mathcal{C}$ of $V$-classes to be avoided, it is enough to self-reduce $V$ to one of the columns that are disjoint from all equivalence classes in $\mathcal{C}$. 
\end{proof}

We now give the main result of this section characterizing the degrees which have a least upper bound with every \er in $\F$.

\begin{thm}\label{computable classes have join with finite}\label{NCA}
	
	An \er $R$ is \NCA if and only if $R$ has a least upper bound with every \er in $\F$.

%	If $R$ is any \er with all classes being computable, then $R$ has a least upper bound with every member of $\F$.
\end{thm}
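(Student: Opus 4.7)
\emph{Forward direction} ($R$ is \NCA $\Rightarrow$ lub with every $F \in \F$). The plan is to exhibit, for any $F \in \F$ with classes $Y_1, \ldots, Y_k$, a specific pure quotient $T^*$ of $R \oplus F$ as the lub. $T^*$ will collapse each $F$-class $Y_i$ with a suitable $R$-class whenever such a collapse is forced in every upper bound (roughly, when $Y_i$ is $m$-reducible to a specific $R$-class of minimal complexity). To verify $T^*$ is a lub, take any upper bound $T'$ with reductions $f \colon R \leqc T'$ and $g \colon F \leqc T'$; the key step is to combine $f$ and $g$ into a reduction $T^* \leqc T'$ via Lemma \ref{normal form for joins}, by replacing $f$ with $f \circ \ell$ for an appropriate $\ell \colon R \leqc R$. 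Letting $\mathcal{D} = (f^\star)^{-1}(g^\star(\omega_F))$ (a finite set of $R$-classes), $\ell$ should avoid a sufficient portion of $\mathcal{D}$. The noncomputable classes in $\mathcal D$ can be avoided using \NCA and Lemma \ref{avoiding one makes you nca}; computable classes in $\mathcal D$ whose $f^\star$-image is noncomputable are avoided via Corollary \ref{non-comp classes with comp preimages are avoidable}; the remaining computable-to-computable case is absorbed into the explicit collapses built into $T^*$.

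\emph{Reverse direction} ($R$ is not \NCA $\Rightarrow$ some $F \in \F$ lacks a lub with $R$). By Lemma \ref{avoiding one makes you nca}, there is a noncomputable $R$-class $C$ such that every reduction $R \leqc R$ hits $C$. The plan is to take $F = E(C)$ and show $R$ and $F$ have no lub. By Lemma \ref{normal form for joins}, any putative lub $T^*$ is $\equivc$ to a pure quotient of $R \oplus F$, and the pure quotients are parameterized by which (if either) $F$-class is collapsed with which $R$-class. For each such candidate $T^*$, I would construct an upper bound $T'$ with $T^* \nleqc T'$, using the unavoidability of $C$: the two principal cases ($T^* \equivc R \oplus F$, and $T^*$ being a nontrivial pure quotient that collapses the $F$-class $C$ with some $R$-class) are refuted by producing upper bounds with different collapse structure, since any reduction between such quotients would require computably separating $C$ from its complement in $\omega$, contradicting the noncomputability of $C$ together with the fact that every self-reduction of $R$ must hit $C$.

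\emph{Main obstacle.} The subtlest point is the computable-to-computable case in the forward direction: neither \NCA nor Corollary \ref{non-comp classes with comp preimages are avoidable} directly applies, so the definition of $T^*$ must be chosen to pre-collapse precisely these cases, which requires an explicit structural analysis of which $F$-classes force a collapse in every upper bound. In the reverse direction, the main challenge is building the obstructing upper bounds for every pure-quotient collapse pattern; the unavoidability of $C$ is the key combinatorial resource, but turning it into a uniform refutation across all pure quotients requires care.
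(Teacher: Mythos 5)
Your high-level plan matches the paper's in shape — construct a specific pure quotient as the candidate least upper bound and verify it by building self-reductions of $R$ that avoid problematic classes — but both directions leave the key technical steps unresolved, and in the reverse direction the starting choice is incorrect.

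\emph{Forward direction.} The criterion you propose for deciding which $F$-classes to pre-collapse in $T^*$ (``when $Y_i$ is $m$-reducible to a specific $R$-class of minimal complexity'') is not the right one, and you yourself flag this as the main obstacle. The paper's construction is unconditional: with $k=\abs{\omega_F}$ and $j$ the minimum of $k$ and the number of computable $R$-classes, it fixes arbitrary computable $R$-classes $C_1,\ldots,C_j$ and takes $X := R\restriction \overline{\bigcup_{i\le j}C_i}\oplus F$ as the candidate (equivalently, the pure quotient collapsing $C_i$ with the $i$-th $F$-class for $i\le j$). There is no $m$-degree condition on the $Y_i$'s, and the $C_i$'s are arbitrary. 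The missing ingredient is the \emph{swap} step: given a pure quotient $R\oplus F_{/A}$, one first uses \NCA (together with Corollary~\ref{non-comp classes with comp preimages are avoidable}) to build $h\colon R\leqc R$ avoiding every noncomputable $A$-collapsed class; the remaining $R$-classes $K_1,\ldots,K_m$ ($m\le j$) whose $h$-images are $A$-collapsed are then necessarily computable, and a second self-reduction $g$ swaps each $K_i$ with $C_i$. Restricting $h\circ g$ to $\overline{\bigcup_{i\le j}C_i}$ gives the reduction of $X$ into $R\oplus F_{/A}$. Without the swap, avoiding the set $\mathcal D$ as you describe cannot be done in the computable-to-computable case, since no \NCA-type tool applies to computable classes.

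\emph{Reverse direction.} Choosing $F=E(C)$ for the unavoidable noncomputable class $C$ does not work. The paper's argument hinges on selecting $Y$ so that $Y$ and $\overline{Y}$ are $m$-incomparable with \emph{every} noncomputable $R$-class; this is exactly what forces any pure-quotient lub $T=R\oplus E(Y)_{/\sim}$ to collapse $E(Y)$-classes only with \emph{computable} $R$-classes (via Lemma~\ref{f respects m-degrees}). If instead $F=E(C)$, the odd $C$-class is $m$-equivalent to the even $R$-class $C$, so that structural control is lost and the case analysis collapses. Beyond this, the refutation step (``producing upper bounds with different collapse structure ... computably separating $C$'') is not developed enough to evaluate; the paper instead takes a \emph{different} total quotient $S$ (collapsing $A$ with the $Y$-class and some $B$ with the $\overline{Y}$-class) and extracts from $g\colon T\leqc S$ a self-reduction $\hat h$ of $R$ avoiding $A$, or else a computable preimage of $A$, finishing with Corollary~\ref{non-comp classes with comp preimages are avoidable}. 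That extraction step is the heart of the argument and is absent from the proposal.

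In summary, the proposal correctly identifies the intended architecture but does not resolve the two decisive technical points (the swap argument in the forward direction; the careful choice of $Y$ and the extraction of a self-reduction in the reverse direction), and the choice $F=E(C)$ actively undermines the reverse direction.
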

\begin{proof}
$(\Rightarrow)$ Let $R$ be \NCA. Fix $S\in \F$ and let $k=\abs{\omega_S}$. Fix $a_1,\ldots, a_k$ representing the $k$ distinct $S$-classes. Let $j\leq k$ be the minimum of $k$ and the number of computable $R$-classes, and fix $C_1,\ldots C_j$ to be computable $R$-classes. We will show that $X:=R\restriction \overline{ \bigcup_{i\leq j} C_i} \oplus S$ is a least upper bound for $R$ and $S$. First note that {$X$} is an upper bound for $R$ (and trivially $S$) via the function $f(x)= 2a_i+1$ if $x\in C_i$ for $i\leq j$ and otherwise $f(x)=2x$.

By Lemma \ref{normal form for joins}, it suffices to show that $X$ is reducible to any pure quotient $R\oplus S_{/A}$ of $R\oplus S$. 
Fix a pure quotient $R\oplus S_{/A}$. Let $h\colon R\leqc R$ be a reduction of $R$ to itself which avoids every noncomputable $R$-class which is $A$-collapsed with an $S$-class in $R\oplus S_{/A}$. Let $K_1,\ldots K_m$ enumerate the $R$-classes so that $h^\star(K_i)$ is $A$-collapsed with an $S$-class. Note that these all must be computable, and $m\leq j$. If any $K_{i_0}$ equals some $C_{i_1}$ for $i_0,i_1\leq m$, then reorder the $K$'s so that $i_0=i_1$.

Let $g$ be a reduction of $R$ to itself which swaps $K_i$ with $C_i$ for $i\leq m$. That is, 
\[
g(x)= \begin{cases}
x & x\notin \bigcup_{i\leq m} C_i\cup \bigcup_{i\leq m} K_i\\
\min K_i	& x\in C_i\\
\min C_i & x\in K_i.
\end{cases}
\]
Then all $R$-classes which are sent via $h\circ g$ to an $R$-class $A$-collapsed with an $S$-class are among the classes $C_i$ for $i\leq m$. Thus, taking the restriction of $h\circ g$ to the set $\overline{\bigcup_{i\leq j} C_i}$ gives a reduction $f$ of $R\restriction \overline{ \bigcup_{i\leq j} C_i}$ to $R$ which avoids every $R$-class which is $A$-collapsed with an $S$-class. Then we can make a reduction $f'$ of $R\restriction \overline{ \bigcup_{i\leq j} C_i}\oplus S$ to $R\oplus S_{/A}$ by following $f$ on $R\restriction \overline{ \bigcup_{i\leq j} C_i}$ and being the identity map on $S$-classes.

$(\Leftarrow)$ Assume that $R$ has a least upper bound with every finite equivalence relation, and fix a noncomputable class $A\in \omega_R$.  Let $Y$ be a set so that $Y$ and $\overline{Y}$ are $m$-incomparable with every noncomputable $R$-class. Let $T$ be the least upper bound of $R$ and $E(Y)$. We  will show that the existence of the least upper bound $T$ will imply that there is a reduction $f\colon  R\leqc R$ which avoids the class $A$. By Lemma \ref{avoiding one makes you nca}, this suffices to show that $R$ is \NCA.

  By Lemma \ref{normal form for joins}, we may assume $T=R\oplus E(Y)_{/\sim}$, a pure quotient of $R\oplus E(Y)$.  Since $T\leqc R\oplus E(Y)$, we see that no noncomputable $R$-class $C$ can be collapsed in $T$ to an $E(Y)$-class. This is because then $f\colon T\leqc R\oplus E(Y)$ would give an $m$-reduction from $C\oplus Y$ (or $C\oplus \overline{Y}$) to either some $E(Y)$-class (giving an $m$-reduction of $C$ to $Y$ or $\overline{Y}$) or to an $R$-class (giving an $m$-reduction of $Y$ or $\overline{Y}$ to an $R$-class).
  So, we know $T=R\oplus E(Y)_{/\sim}$ where $\sim$ collapses at most 2 $R$-classes, each of which must be computable, with the odd classes.
  
  Fix any $R$-class $B\neq A$ and let 
\[  
  S:=R\oplus E(Y)_{/(2\min A, 2\min Y +1), (2\min B,2\min \overline{Y}+1)},
\]  
  i.e., we collapse $A$ with the $Y$-class in $E(Y)$ and $B$ with the $\overline{Y}$ class in $E(Y)$.
  Next, consider the reduction $g\colon T\leqc S$. Consider the two $T$-classes of $Y$ and $\overline{Y}$ (possibly collapsed also with computable $R$-classes). Since these do not $m$-reduce to any $R$-class, their $g$-images must intersect the odds. Thus, the image of the evens under $g$, with the exception of two classes, must avoid each class containing the odds. In other words, we have a reduction $h\colon  R\restriction Z\leq R$ where $Z=\overline{C}$ for $C$ the union of the (at most 2) computable $R$-classes which are $\sim$-collapsed in $T$ with odd classes, and $h$ avoids the classes $A$ and $B$. Thus, by extending $h$ to the computable classes, we get a reduction $\hat{h}\colon R\leqc R$ and if $A$ has an $\hat{h}$-preimage, this preimage must be a computable class. If $A$ is not in the image of $\hat{h}$ (e.g., if $T=R\oplus E(Y)$ and $\sim$ does not collapse any computable $R$-class to an $E(Y)$-class), then we are done. So, suppose the class $D$ is computable and is sent to $A$ via $\hat{h}$. Then we apply Corollary~\ref{non-comp classes with comp preimages are avoidable} to show that there is a reduction of $R$ to itself which avoids $A$. 
\end{proof}

%\subsection{Defining $\bLight$ and $\bDark$}\label{sec:defining Id}
We turn to showing that $\bId$ is definable in $\ER$ as the unique \NCA degree minimal over $\bF$. From there, we define $\bLight$ and $\bDark$.

\begin{thm}\label{definability of Id}\label{Id joins with finites}
	In $\ER$, $\bId$ is definable as the unique \NCA degree which is minimal over $\bF$.
\end{thm}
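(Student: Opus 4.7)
The plan has two parts: verify that $\bId$ meets both conditions, and establish uniqueness. For the first part, Observation~\ref{ceers initial} shows that the $\ER$-degrees of computable \ers form an initial segment of order type $\omega+1$, equal to $\bI \cup \{\bId\}$. Since any \er computably reducible to a computable \er is itself computable (decide $x \rel{R} y$ via $f(x) \rel{T} f(y)$ where $f\colon R\leqc T$ and $T$ is computable), no noncomputable degree lies strictly below $\bId$. Hence every proper predecessor of $\bId$ lies in $\bI \subseteq \bF$, so $\bId$ is minimal over $\bF$. Moreover, $\Id$ has no noncomputable classes at all, so the \NCA condition is vacuously witnessed by the identity reduction $\Id\leqc \Id$. (Alternatively, $\Id$ is itself a least upper bound of $\Id$ and any finite \er since $\Id_n\leqc \Id$, and Theorem~\ref{computable classes have join with finite} then gives \NCA.)

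For uniqueness, suppose $R$ is \NCA and minimal over $\bF$; in particular $R\notin \bF$, so $R$ is infinite, and therefore $R$ is either light or dark. The main obstacle is ruling out the dark case. If $R$ is dark and minimal over $\bF$, then $R$ is dark minimal in the sense of the definition preceding Lemma~\ref{lem:dark minimal and ce sets}. By Lemma~\ref{dark minimal implies r.i.}, the $R$-classes are pairwise computably inseparable, so every $R$-class is noncomputable, since any computable $R$-class $C$ would computably separate itself from every other $R$-class. By Lemma~\ref{dark implies self-full}, every self-reduction of $R$ hits every $R$-class, so no self-reduction of $R$ avoids any of its classes; in particular, taking any noncomputable $R$-class $C$ and the singleton collection $\mathcal{C}=\{C\}$ yields a direct contradiction with \NCA.

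Therefore $R$ must be light, so $\Id \leqc R$. Since $\Id \notin \bF$, the minimality of $R$ over $\bF$ precludes $\Id <_c R$, forcing $\Id \equivc R$. Hence $R$ has degree $\bId$, completing the uniqueness argument and thereby the first-order definability of $\bId$ in $\ER$. The conceptual crux is the tension between the self-fullness of dark \ers and the \NCA property: dark-minimality, via computable inseparability of classes, ensures that every class is noncomputable, and then self-fullness makes the \NCA requirement impossible to meet.
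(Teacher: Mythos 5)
Your proof is correct and follows essentially the same route as the paper's: show $\Id$ is trivially minimal over $\bF$ and NCA, then dispatch any other candidate by observing it must be dark minimal, hence self-full (Lemma~\ref{dark implies self-full}) with only noncomputable classes (Lemma~\ref{dark minimal implies r.i.}), which is incompatible with NCA. Your version is slightly more explicit about the light/dark split and why dark-minimal \ers have no computable classes, but the key lemmas invoked and the structure of the argument match the paper's.
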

\begin{proof}
	 The fact that $\Id$ is minimal over $\mathcal{F}$ is easy ($\Id\restriction W \equivc \Id_{\abs{W}}$ for any c.e.\ $W$), and $\Id$ is obviously \NCA.
	
%	\begin{lem}
%		$\Id$ has a join with any $R\in \F$.
%	\end{lem}
%	\begin{proof}
%		Let $R\in \F$ be given. We want to show that $R\oplus \Id$ is the least upper bound of $R$ and $\Id$. Let $X$ be any \er above both $R$ and $\Id$. Let $f:R\leqc X$. Let $g\colon \Id\leqc X$ be so that $\range(g)$ is disjoint from the finitely many classes in $\range(f)$. Then $f\oplus g$ is a reduction of $R\oplus \Id$ to $X$. 
%	\end{proof}

	We now verify that $\mathbf{Id}$ is the only minimal \NCA degree. Every other degree minimal over $\bF$ is self-full by Lemma \ref{dark implies self-full} and has a noncomputable class by Lemma \ref{dark minimal implies r.i.}. Clearly any self-full \er with a noncomputable class is not \NCA.
\end{proof}

\begin{corollary}\label{cor:light and dark are definable}
	$\bLight$ and $\bDark$ are definable in $\ER$.
\end{corollary}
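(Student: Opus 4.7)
The plan is to simply combine the two definability results already in hand. By Theorem~\ref{definability of Id}, the degree $\bId$ is first-order definable in $\ER$ without parameters. By Corollary~\ref{definable cory}, the class $\bF$ is also first-order definable in $\ER$ without parameters.

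Next, I would observe that, by the definition of lightness, a degree $\mathbf{d}\in\ER$ is light precisely when some (equivalently, every) representative $R\in\mathbf{d}$ satisfies $\Id\leqc R$, i.e., when $\bId \leq \mathbf{d}$ in $\ER$. Note that $\bId$ itself is light, since trivially $\Id\leqc\Id$. Thus we obtain the explicit first-order definition
\[
\bLight = \{\mathbf{d}\in \ER : \bId \leq \mathbf{d}\},
\]
which is a definition without parameters because $\bId$ is definable.

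Finally, since $\ER$ is partitioned into $\bF$, $\bLight$, and $\bDark$ (as noted immediately before Observation~\ref{ceers initial}), we obtain
\[
\bDark = \{\mathbf{d}\in \ER : \mathbf{d}\notin \bF \text{ and } \bId \not\leq \mathbf{d}\},
\]
which is again first-order definable without parameters. There is no real obstacle here beyond invoking the two previous definability results; the corollary is genuinely immediate once Theorem~\ref{definability of Id} is in place.
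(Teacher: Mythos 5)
Your proof is correct and follows exactly the same route as the paper: $\bLight$ is defined as the degrees above the (definable) $\bId$, and $\bDark$ as the degrees outside $\bF\cup\bLight$, both relying on Theorem~\ref{definability of Id} and Corollary~\ref{definable cory}. Nothing to add.
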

\begin{proof}
	$\mathbf{d}\in \bLight$ if and only if $\bId\leq \mathbf{d}$. $\mathbf{d}\in \bDark$ if and only if $\mathbf{d}\notin \bF\cup \bLight$.
\end{proof}

Having defined the degree $\bId$, we wonder which other degrees are definable in $\ER$. In particular, we ask if the degree of the universal ceer is definable:

\begin{question}
	Is the degree of the universal ceer, or equivalently the substructure $\Ceers$, definable in $\ER$?
\end{question}

\section{Covers and Branching}\label{sec:covers}

We now turn our attention to further structural properties in $\ER$. We consider the existence of minimal covers and strong minimal covers, and we explore which degrees are branching. Here, many of the results differ from their analogues in the theory of ceers.

In a degree structure, a \emph{minimal cover} for a degree $\mathbf{d}$ is a minimal upper bound of  $\set{\mathbf{d}}$, i.e., a degree $\mathbf{c}>\mathbf{d}$ such that there is no degree strictly between $\mathbf{c}$ and $\mathbf{d}$; a minimal cover $\mathbf{c}$ of $\mathbf{d}$ is \emph{strong} if anything strictly below $\mathbf{c}$ is bounded by $\mathbf{d}$, i.e., 
\[
(\forall \mathbf{b})(\mathbf{b}<\mathbf{c}\Rightarrow \mathbf{b}\leq \mathbf{d}).
\]
A degree is \emph{branching} if it is the meet of two incomparable degrees.

In $\Ceers$, not all degrees are branching. Andrews and Sorbi~\cite{andrews2019joins} proved that a ceer $R$ is self-full if and only if $R\oplus \Id_1$ is the unique strong minimal cover of $R$. Further, it has the following upward covering property: If $X>R$, then $X\geq R\oplus \Id_1$. This implies that the degree of $R$ cannot branch. In fact, they show that the branching degrees in $\Ceers$ are precisely the non-self-full degrees \cite[Theorem 7.8]{andrews2019joins}. In $\mathbf{ER}$, the situation is quite different. In this section, we will show that every degree has continuum many strong minimal covers, and therefore every degree is branching. 
Before proving these results, we will concentrate on the $\oplus \Id_k$ operation for self-full equivalence relations (where $R\oplus \Id_k>_c R$). We show that though $R\oplus \Id_1$ is a minimal cover of any self-full equivalence relation $R$ (Corollary \ref{cor:minimal covers of self-full}), it is not always a strong minimal cover. That is,  surprisingly and in constract with the case of ceers, there are equivalence relations $R$ such that $R\oplus \Id_1>_c S$, for some $S$,  but $S$ is not computably reducible to $R$.

%\subsection{Covers of self-full degrees}

\begin{thm}
	If $R$ is self-full and $R\leqc S\leqc R\oplus \Id_k$, then there is some $j\leq k$ so that $S\equivc R\oplus \Id_j$. 
\end{thm}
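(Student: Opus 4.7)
The plan is to proceed by induction on $k$, with the base case $k = 0$ being immediate. For the inductive step, I would apply Lemma \ref{lem:below a join} to write $S \equivc R_0 \oplus \Id_m$ for some $R_0 \leqc R$ and $m \leq k$. If $m < k$ then $R \leqc S \leqc R \oplus \Id_m$ fits the inductive hypothesis with parameter $m < k$, yielding $S \equivc R \oplus \Id_j$ for some $j \leq m \leq k$.

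The core of the argument handles the case $m = k$. Fix a reduction $f \colon R \to R_0 \oplus \Id_k$ and let $\ell$ be the number of $\Id_k$-classes hit by $f^\star$. Each hit $\Id_k$-class is computable, so its preimage under $f$ is a computable $R$-class: call these $A_1, \ldots, A_\ell$. Setting $R' = R \restriction \overline{\bigcup_i A_i}$, an iterated application of Lemma \ref{Collapsing a computable class is subtracing id1} gives $R \equivc R' \oplus \Id_\ell$, while the restriction of $f$ to $\overline{\bigcup_i A_i}$, followed by dividing by $2$, yields a reduction $R' \leqc R_0$. Moreover, $R'$ is self-full: any self-reduction of $R'$ extends by the identity on the $\Id_\ell$-part to a self-reduction of $R' \oplus \Id_\ell \equivc R$, which must be surjective on classes by self-fullness of $R$, forcing the $R'$-part of the extension to be surjective on $\omega_{R'}$.

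When $\ell < k$, the image of $f$ is contained in $R_0 \oplus \Id_\ell$, so letting $S' := R_0 \oplus \Id_\ell$ we have $R \leqc S' \leqc R \oplus \Id_\ell$ with $\ell < k$; the inductive hypothesis yields $S' \equivc R \oplus \Id_{j'}$ for some $j' \leq \ell$, and then $S \equivc S' \oplus \Id_{k - \ell} \equivc R \oplus \Id_{j' + k - \ell}$ with $j' + k - \ell \leq k$, as required. The most delicate remaining case is $\ell = k$, where the sub-problem $R' \leqc R_0 \leqc R' \oplus \Id_k$ is again an instance of the same statement for the self-full $R'$ with the same parameter $k$, so the induction on $k$ does not close directly. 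Here I would recurse: each successive round extracts $k$ fresh distinct computable $R$-classes, and if the recursion never fell into an earlier case it would produce an infinite sequence of such classes; arranging them into a uniformly computable sequence would, via Lemma \ref{avoid computable classes from a uniformly computable sequence of computable classes}, yield a non-surjective self-reduction of $R$, contradicting self-fullness. I expect the careful handling of this termination to be the main technical obstacle.
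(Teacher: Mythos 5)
Your opening moves are sound: decomposing $S\equivc R_0\oplus\Id_m$ via Lemma \ref{lem:below a join}, dispatching $m<k$ by the induction hypothesis, and in the $m=k$ case extracting $R'=R\restriction\overline{\bigcup_i A_i}$ with $R\equivc R'\oplus\Id_\ell$, $R'\leqc R_0$, and $R'$ self-full all check out, as does the $\ell<k$ case. These are genuinely different preliminary steps from the paper's, which never invokes Lemma \ref{lem:below a join} and instead argues directly about the ranges of the two reductions $f\colon R\leqc S$ and $g\colon S\leqc R\oplus\Id_k$.

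The gap is exactly where you suspect, and it is the crux rather than a technicality. In the $\ell=k$ case you recurse on $R'\leqc R_0\leqc R'\oplus\Id_k$, which keeps the parameter $k$ fixed, so this is not a well-founded induction; you must argue that an infinite run yields a contradiction. You intend to feed the harvested classes into Lemma \ref{avoid computable classes from a uniformly computable sequence of computable classes}, but that lemma requires a \emph{single uniformly computable} sequence of distinct computable $R$-classes. The classes produced at round $i+1$ come from a fresh reduction obtained by another application of Lemma \ref{lem:below a join} and a fresh count $\ell^{(i+1)}$; nothing in the sketch shows that these choices, and the case decisions at each round, can be made uniformly computably in $i$. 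Without that, the sequence is not uniformly computable and the lemma does not apply. The paper bypasses the recursion altogether: it fixes one composite $h=g\circ f\colon R\leqc R\oplus\Id_k$, proves via two short claims (again using the induction hypothesis, but applied to $R\oplus\Id_1$) that $h$ hits every class of $R\oplus\Id_k$, and then sets $C_i=\{x: h\circ(\tfrac{h}{2})^{(i)}(x)\rel{R\oplus\Id_k}a\}$ for a fixed odd $a$. Because the $C_i$ arise by iterating a single fixed computable map, uniform computability is immediate. Your route aims at the same final contradiction, but making the recursion deliver the needed uniformity would essentially amount to replacing it with the paper's iterated-$h$ construction.
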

\begin{proof}
	We prove this by induction on $k$. For $k=0$, the result is trivial.
Next, let $f\colon R\leqc S$, $g\colon S\leqc R\oplus \Id_k$, and  suppose that $S$ is not equivalent to $R\oplus \Id_j$ for any $j\leq k$.

	\begin{claim}
		The range of $f$ intersects every $S$-class.
	\end{claim}
	\begin{proof}
		If the range of $f$ did not intersect every $S$-class, then we would have $R\oplus \Id_1\leqc S$. But then we could use the inductive hypothesis, since $R\oplus \Id_1\leqc S \leqc R\oplus \Id_1\oplus \Id_{k-1}$. Thus, we would know that $S\equivc R\oplus \Id_1\oplus \Id_j$ for some $j\leq k-1$, but then it would follow that $S\equivc R\oplus \Id_{j'}$ for some $j'\leq k$.
	\end{proof}

	\begin{claim}
		The range of $g$ intersects every $R\oplus \Id_k$-class. 
	\end{claim}
	\begin{proof}
		If the range of $g$ did not intersect every $R\oplus \Id_k$-class, then we would have $S\leqc R\oplus \Id_{k-1}$. But then, since $R\leqc S\leqc R\oplus \Id_{k-1}$, we could use the inductive hypothesis to show that $S\equivc R\oplus \Id_j$ for some  $j\leq k-1$.
	\end{proof}

	Let $h\colon =g\circ f$ be the composite reduction of $R$ to $R\oplus \Id_k$ through $S$. Fix any odd number $a$ and  let $C_i:=\{x : h\circ (\frac{h}{2})^{(i)}(x) \rel{R\oplus \Id_k} a \}$. Note that  the $C_i$'s so defined for $i\geq 1$ are a uniform sequence of computable $R$-classes. Thus Lemma \ref{avoid computable classes from a uniformly computable sequence of computable classes} yields a contradiction by showing that $R$ is not self-full.
\end{proof}

Applying this to $k=1$, we get that if $R$ is self-full, then $R\oplus \Id_1$ is a minimal cover of $R$.

\begin{corollary}\label{cor:minimal covers of self-full}
Let $R$ be self-full. Then $R\oplus \Id_1$ is a minimal cover of $R$.
\end{corollary}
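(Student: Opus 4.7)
The plan is a direct appeal to the preceding theorem with $k=1$. Once we know that any $S$ with $R \leq_c S \leq_c R \oplus \Id_1$ satisfies $S \equiv_c R \oplus \Id_j$ for some $j \leq 1$, and recalling from the preliminaries that $\Id_0$ is the empty relation (so $R \oplus \Id_0 \equiv_c R$), the two options collapse to $S \equiv_c R$ or $S \equiv_c R \oplus \Id_1$. This immediately rules out any degree lying strictly between $R$ and $R \oplus \Id_1$, which is the minimality content of the statement.

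The only remaining step is to verify that the cover is proper, i.e., that $R <_c R \oplus \Id_1$. I would prove this by contradiction using self-fullness. Suppose $f \colon R \oplus \Id_1 \leq_c R$. Then $g(x) := f(2x)$ is a reduction of $R$ to itself. Let $C$ be the $R$-class of $f(1)$. If $g(x) \rel{R} f(1)$ for some $x$, then $2x \rel{R \oplus \Id_1} 1$, which is impossible since $2x$ is even and $1$ is odd. Hence $g$ avoids the class $C$, contradicting the self-fullness of $R$.

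I do not expect a real obstacle here: the corollary is essentially just the observation that $R \oplus \Id_0$ and $R \oplus \Id_1$ are the only possibilities allowed by the theorem when $k=1$, together with the short parity argument above that rules out $R \oplus \Id_1 \leq_c R$ under self-fullness.
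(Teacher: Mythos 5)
Your proof is correct and follows exactly the route the paper takes: apply the preceding theorem with $k=1$ to get that any $S$ with $R \leq_c S \leq_c R\oplus\Id_1$ is equivalent to $R$ or $R\oplus\Id_1$. Your parity argument verifying $R <_c R\oplus\Id_1$ via self-fullness is a nice explicit filling-in of a detail the paper leaves implicit.
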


Now, we will show that, contrary to the case of ceers, there are self-full equivalence relations $R$ so that $R \oplus \Id_1$ is not a strong minimal cover of $R$. To do so, we introduce generic covers of equivalence relations. Intuitively, a generic cover $S$ of a given equivalence relation $R$ codes $R$ into the evens and is generic given this property.

\begin{definition}
A \emph{generic cover} $S$ of an \er $R$ is any \er of the form $R\oplus \Id_{/ \graph(f)}$, where $f\colon \odds \rightarrow \evens$ 
%$f\in \evens^{\odds}$ 
is $1$-generic over the Turing degree of $R$.
\end{definition}

Clearly, $R$ is computably reducible to any
generic cover of $R$ via the map $x\to 2x$. We now see how reductions into the odds must intersect the classes of $S$.

\begin{lemma}\label{lem:generic-dark}
Let $S$ be a generic cover of $R$ and $Z\subseteq \odds$ be an infinite set which is c.e.\ in  the Turing degree of $R$. Then, $Z$ intersects every $S$-class infinitely. It follows that $S\not\leqc R$.
%
%In particular, $Z$ is not a transversal and it is not contained in any one class. 
\end{lemma}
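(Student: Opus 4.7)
The plan is to use a 1-genericity argument: viewing $f$ as being built by finite partial conditions $p \colon \odds \to \evens$, I would identify, for each $S$-class, a dense $\Sigma^{0,R}_1$ set of conditions that forces many elements of $Z$ into that class, so $f$ must meet each such set.

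First note that every $S$-class contains some even number (since the embedding $n \mapsto 2n$ hits the even representative of every $R$-class), so every $S$-class is of the form $[c]_S$ for some $c\in\evens$. Now fix such a $c$ and an arbitrary $n \in \omega$, and define
\[
D_{c,n} = \bigl\{p : \bigl|\{2m+1 \in Z \cap \dom(p) : p(2m+1) \rel{R\oplus\Id} c\}\bigr| \geq n\bigr\}.
\]
This set is c.e.\ in $\deg_T(R)$ because $Z$ is c.e.\ in $\deg_T(R)$ and checking $p(2m+1)\rel{R\oplus\Id}c$ requires only an $R$-oracle. It is dense: given any $p$, the infinitude of $Z$ lets us pick $n$ odd numbers from $Z$ outside $\dom(p)$ and extend $p$ to send them all to $c$. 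By $1$-genericity of $f$ over $\deg_T(R)$, the function $f$ meets $D_{c,n}$ for every $c$ and $n$, so at least $n$ elements of $Z$ get mapped by $f$ into $[c]_{R\oplus\Id}$, i.e., collapsed into the $S$-class of $c$. Letting $n \to \infty$ gives that $Z\cap [c]_S$ is infinite.

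For the second assertion, suppose toward a contradiction that $g \colon S \leqc R$ is computable, and (without loss of generality) that $R$ has at least two classes, so that the statement is non-vacuous. Let $Z_0 := \{x \in \odds : g(x) \rel{R} g(0)\}$; this is c.e.\ in $\deg_T(R)$ and is exactly the set of odd numbers $S$-equivalent to $0$. Applying the first part to the (trivially c.e.) set $\odds$ shows that $[0]_S$ contains infinitely many odds, so $Z_0$ is infinite. Applying the first part again, this time to $Z_0$ itself, yields that $Z_0$ intersects every $S$-class infinitely; but $Z_0 \subseteq [0]_S$, contradicting the existence of a second $S$-class.

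The main obstacle is being careful about the forcing setup and verifying uniformly that $D_{c,n}$ is genuinely $\Sigma^0_1$ in $\deg_T(R)$; once that is in hand, density is immediate from the infinitude of $Z$, and everything else is routine. The trivial case in which $R$ has a single class (so $S \equivc R \equivc \Id_1$ and $S\leqc R$ holds) is the one case the second assertion excludes, but this is automatic in every intended application of generic covers.
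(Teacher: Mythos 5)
Your proof is correct and takes essentially the same approach as the paper: the first assertion is established by a density-of-conditions argument (the paper's dense sets $V_{a,k}$ play the role of your $D_{c,n}$), and the second by applying the first to the $R$-c.e.\ set of odd numbers lying in a single $S$-class under a putative reduction. Your observation that the second assertion silently requires $R\neq\Id_1$ is a genuine, if trivial, edge case that the paper's proof glosses over.
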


\begin{proof}
Assume that $S$, $R$, and $Z$ are as in the statement of the lemma. In particular, $S=R\oplus \Id_{/\graph(f)}$. Observe that the following sets of strings are c.e.\ in $\deg_T(R)$,
%Take 3:
\[
V_{a,k}:=\set{\sigma\in \evens^{<\odds} : (\exists^k x)(x\in Z\wedge \sigma(x)=2a)}
\]
Further, since $Z$ is infinite, $V_{a,k}$
is dense in $\evens^{<\odds}$. Therefore $f$ meets every $V_{a,k}$ by genericity of $f$, and $Z$ intersects the $S$-class of every even number, so every $S$ class, infinitely often.

Next, suppose $f\colon S\leqc R$ and take any odd number $a$. Let $Z=\{b\in \odds : f(b)\rel{R} f(a)\}$. Necessarily $Z$ is an infinite $R$-c.e.\ set since $Z$ contains $[a]_S\cap \odds$ (and the set $\odds$ intersects every $S$-class infinitely by the above). Therefore, $Z$ meets every $S$-class, contradicting that $f$ is a reduction.
\end{proof}

%Equivalently, if $S$ is a generic cover of $R$, then $S|\odds$ is $R$-dark.
%A notable feature of generic covers is that 

So, $R$ is properly reducible to a generic cover of $R$, but the way in which $S$ covers $R$ is quite different from the way in which $R\oplus \Id_1$ covers $R$:

\begin{lem}\label{generic covers}
	If $S$ is a generic cover of $R$, then, for all $n$, the only \ers which reduce to both $R\oplus \Id_n$ and $S$ are the \ers reducible to $R$.
\end{lem}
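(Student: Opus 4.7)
The plan is to modify any reduction $h\colon X\leqc S$ so that its range meets $\odds$ only finitely, and then invoke Lemma~\ref{lem:reducing to total pure quotients and finitely in odds}. To apply that lemma, note that $S$ is a total quotient of $R\oplus\Id$: every odd $y$ is collapsed with $f(y)\in\evens$ by construction, and a simple density/genericity argument (the condition ``$\sigma$ has some argument sent to $2a$'' is dense for each $a$) shows that $f$ is surjective onto $\evens$, so every even is $S$-equivalent to some odd.

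First I would apply Lemma~\ref{lem:below a join} to write $X\equivc R_0\oplus\Id_m$ with $R_0\leqc R$ and $m\leq n$. If $m=0$, then $X\leqc R$ and we are done, so assume $m\geq 1$ and $X=R_0\oplus\Id_m$. Fix a reduction $h\colon X\leqc S$ and let $t_j=2j+1$ represent the $j$-th $\Id_m$-class for $j<m$. Because $h$ is injective on $X$-classes, $h(\evens)$ avoids each of the $m$ distinct $S$-classes $[h(t_0)]_S,\ldots,[h(t_{m-1})]_S$. Now $Z:=h(\evens)\cap\odds$ is c.e., so if $Z$ were infinite, Lemma~\ref{lem:generic-dark} would force $Z$ to meet every $S$-class, contradicting this avoidance. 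Hence $Z$ is finite.

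Next, for each $j<m$ hardcode $b_j\in\omega$ with $2b_j\rel{S}h(t_j)$, taking $b_j=h(t_j)/2$ if $h(t_j)$ is even and $b_j=f(h(t_j))/2$ otherwise (in either case $b_j$ is a single piece of $f$-information). Define
\[
h'(x)=\begin{cases} h(x) & \text{if } x\in\evens,\\ 2b_{k\bmod m} & \text{if } x=2k+1\in\odds.\end{cases}
\]
A case analysis on parities confirms that $h'\colon X\leqc S$ is still a reduction --- the $\Id_m$-part has been rerouted through even representatives of the same $S$-classes --- while $\range(h')\cap\odds\subseteq Z$ is finite. Lemma~\ref{lem:reducing to total pure quotients and finitely in odds} then delivers $X\leqc R$.

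The main obstacle is the finiteness of $Z$, which rests on the genericity of $f$ through Lemma~\ref{lem:generic-dark}: any infinite c.e.\ subset of $\odds$ is forced to meet every $S$-class, but injectivity of $h$ on $X$-classes forbids this once the $\Id_m$-part carves off $m$ ``forbidden'' $S$-classes. The hypothesis $X\leqc R\oplus\Id_n$ enters only through the decomposition step, whose sole purpose is to bound the number of obstruction classes to a finite quantity $m$ so that the contradiction with Lemma~\ref{lem:generic-dark} can be triggered and the subsequent hardcoding of the $b_j$'s is legitimate.
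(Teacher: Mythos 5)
Your proof is correct and reaches the same conclusion via the same two key ingredients --- Lemma~\ref{lem:generic-dark} to get finiteness in the odds and Lemma~\ref{lem:reducing to total pure quotients and finitely in odds} to conclude --- but it travels a longer road than the paper's argument. The paper does not decompose $X$ at all: fix the given reduction $g\colon X\leqc S$, take \emph{any} two $X$-classes $A,B$, and observe that $A,B\leq_m R\oplus\Id_n\equiv_T R$ by Lemma~\ref{f respects m-degrees}, so $\odds\cap\ran(g\restriction\overline A)$ and $\odds\cap\ran(g\restriction\overline B)$ are $R$-c.e. Each must be finite, since otherwise Lemma~\ref{lem:generic-dark} would force $g\restriction\overline A$ to hit $g^\star(A)$ (respectively for $B$), contradicting injectivity of $g^\star$; and since $\overline A\cup\overline B=\omega$ it follows immediately that $\ran(g)\cap\odds$ is finite, so Lemma~\ref{lem:reducing to total pure quotients and finitely in odds} applies to $g$ itself, with no modification needed. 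Your route instead goes through Lemma~\ref{lem:below a join} to split $X$ as $R_0\oplus\Id_m$, proves finiteness only of $h(\evens)\cap\odds$, and then must construct and verify the auxiliary reduction $h'$ that reroutes the $\Id_m$-part into the evens. This is correct, just heavier; the hypothesis $X\leqc R\oplus\Id_n$ is doing the same job in both proofs (guaranteeing $R$-computability of the relevant data), the paper simply extracts it more cheaply via $m$-degrees of classes. Your explicit verification that $S$ is a total quotient of $R\oplus\Id$ is a worthwhile observation that the paper leaves implicit. One small care point: Lemma~\ref{lem:below a join} can produce a degenerate decomposition in which $V_1=\emptyset$, i.e.\ $X\equivc\Id_m$ for some $m\geq 1$ with no ``$R_0$'' present, which your $m=0$ escape clause does not cover. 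This is easily repaired, since $\Id_m\leqc S$ and $|\omega_S|=|\omega_R|$ (every $S$-class meets the evens, whose restriction is $R$) give $\Id_m\leqc R$ directly, but as written your case analysis has a hole there.
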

\begin{proof}
	Suppose that, for some equivalence relation $X$, there are $f\colon  X\leqc R\oplus \Id_n$ and $g \colon  X\leqc S$. 
	Let $A$ and $B$ be any two $X$-classes. Note that $A,B \leq_m R\oplus \Id_n \equiv_T R$ by Lemma \ref{f respects m-degrees}. Consider the $R$-c.e.\ sets $\odds \cap \ran(g\restriction \overline{A})$ and $\odds \cap \ran(g\restriction \overline{B})$. These must both be finite, as otherwise Lemma \ref{lem:generic-dark} would show that $g\restriction \overline{A}$ would hit $g^\star(A)$ or $g\restriction \overline{B}$ would hit $g^\star(B)$. Thus $\ran(g)\cap \odds$ is finite. So, Lemma \ref{lem:reducing to total pure quotients and finitely in odds} shows that $X\leqc R$.
\end{proof}

In \textbf{Ceers}, $R\oplus \Id_1$ is a strong minimal cover (in fact, the only one) of a given self-full ceer $R$. Hence, any ceer which is below $R\oplus \Id_1$ is already reducible to $R$. But the dual property also holds: $R\oplus \Id_1$ reduces to any ceer which is above $R$ (see \cite[Lemma 4.5]{andrews2019joins} for details). The next theorem uses generic covers to show that these properties both fail in $\ER$.

\begin{thm}\label{thm:plusId1NotAStrongCover}
The following hold.
\begin{enumerate}
\item Let $R$ be any self-full \er. There is $S$ such that $R<_c S$ but $R\oplus \Id_1\not\leqc S$. 
\item There exist a self-full \er $R$ such that, for some $S$, $S<_c R\oplus \Id_1$ but $S\not\leqc R$.
\end{enumerate}
\end{thm}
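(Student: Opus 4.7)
The plan for both parts is to exploit the generic cover construction just introduced.

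For part~(1), given self-full $R$, I take $S$ to be any generic cover of $R$. The map $x\mapsto 2x$ witnesses $R\leqc S$, while Lemma~\ref{lem:generic-dark} gives $S\not\leqc R$, so $R<_c S$ strictly. To see $R\oplus\Id_1\not\leqc S$, observe that if it held, then $R\oplus\Id_1$ would reduce to both $S$ and (trivially) $R\oplus\Id_1$, so Lemma~\ref{generic covers} with $n=1$ and $X:=R\oplus\Id_1$ would force $R\oplus\Id_1\leqc R$, contradicting self-fullness.

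For part~(2), I pick a dark minimal \er $T$, let $R$ be a generic cover of $T$, and take $S:=T\oplus\Id_1$. The heart of the proof is a two-part technical claim: (i)~$R$ is self-full, and (ii)~every reduction $g\colon T\leqc R$ is surjective onto $\omega_R$. Both are proved by a case split on $\ran(g)\cap\odds$ for the relevant reduction $g$. If this set is infinite, then it is an infinite c.e.\ subset of $\odds$, so by Lemma~\ref{lem:generic-dark} it meets every $R$-class, and $g$ is class-surjective. If it is finite, then viewing $R=T\oplus\Id_{/\graph(f)}$ as a total quotient and applying Lemma~\ref{lem:reducing to total pure quotients and finitely in odds} yields a reduction of the source of $g$ to $T$. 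For~(i), the source is $R$, giving $R\leqc T$ and contradicting Lemma~\ref{lem:generic-dark}. For~(ii), the source is $T$, and the resulting $g'\colon T\leqc T$ is class-surjective by self-fullness of $T$ (Lemma~\ref{dark implies self-full}); since each $R$-class has an even representative encoding a $T$-class, class-surjectivity of $g'$ transfers to $g$.

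From~(i) and~(ii), the required properties of $S=T\oplus\Id_1$ follow. The reduction $S\leqc R\oplus\Id_1$ is immediate from $T\leqc R$. For $S\not\leqc R$, any reduction $T\oplus\Id_1\leqc R$ would restrict on its $T$-part to a reduction $T\leqc R$, which by~(ii) is class-surjective, leaving the $\Id_1$-class nowhere to go without violating injectivity on classes. For strictness $S<_c R\oplus\Id_1$, suppose $R\oplus\Id_1\leqc T\oplus\Id_1$; then $R\leqc T\oplus\Id_1$, and Lemma~\ref{lem:below a join} together with $T$ being dark minimal forces $R\equivc T$ or $R\equivc T\oplus\Id_1$. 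The first contradicts $R>_c T$ (Lemma~\ref{lem:generic-dark}), and the second would imply $T\oplus\Id_1\leqc R$, contradicting~(ii).

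The main obstacle is claim~(ii); once it is secured, the remaining steps are bookkeeping with Lemma~\ref{lem:below a join} and minimality of $T$ over $\bF$.
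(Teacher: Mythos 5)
Part~(1) of your proposal is essentially the paper's proof: take a generic cover $S$ of $R$ and apply Lemma~\ref{generic covers} with $X=R\oplus\Id_1$ to rule out $R\oplus\Id_1\leqc S$, using self-fullness of $R$ to get the final contradiction.

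For part~(2) your argument is correct but takes a noticeably more hands-on route. The paper takes $S_0$ to be \emph{any} self-full \er (not necessarily dark minimal), lets $R$ be a generic cover of $S_0$, sets $S=S_0\oplus\Id_1$, and gets both $S\not\leqc R$ and strictness $S<_c R\oplus\Id_1$ in one stroke from Lemma~\ref{generic covers}: applying it to $X=S_0\oplus\Id_1$ gives $S_0\oplus\Id_1\leqc S_0$ (impossible by self-fullness of $S_0$), and applying it to $X=R$ gives $R\leqc S_0$ (impossible by Lemma~\ref{lem:generic-dark}). Your claims~(i) and~(ii), proved directly via the case split on $\ran(g)\cap\odds$ together with Lemma~\ref{lem:reducing to total pure quotients and finitely in odds} and Lemma~\ref{lem:generic-dark}, are essentially an unfolding of the proof of Lemma~\ref{generic covers}, and your strictness argument via Lemma~\ref{lem:below a join} plus dark minimality is a longer detour around the same destination. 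That said, your claim~(i) is a genuine addition: the theorem statement requires the constructed $R$ to be self-full, and the paper's proof of~(2) never verifies this. Your observation that every generic cover is self-full — because a self-reduction with finite image in the odds would, via Lemma~\ref{lem:reducing to total pure quotients and finitely in odds}, push the cover down to its base, contradicting Lemma~\ref{lem:generic-dark} — fills that gap cleanly and holds for an arbitrary base, so you could in fact drop the dark-minimality hypothesis on $T$ and recover the paper's level of generality by replacing your Lemma~\ref{lem:below a join} argument with the two applications of Lemma~\ref{generic covers} described above.
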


\begin{proof}
$(1)$:  Let $S$ be a generic cover of $R$. $S$ is above $R$ and, by Lemma \ref{generic covers}, we have that $S$ is incomparable with $R\oplus \Id_1$.

$(2)$: Let $S_0$ be any self-full \er, let $R$ be a generic cover of $S_0$, and denote $S_0\oplus \Id_1$ by $S$. 
	It is immediate that $S\leqc R\oplus \Id_1$ as $S_0\leqc R$. But $S$ and $R$  are incomparable by Lemma \ref{generic covers}. 
\end{proof}

Having shown that $R\oplus \Id_1$ is not a strong minimal cover for some self-full $R$, it is natural to ask whether every self-full degree has a strong minimal cover. The next theorem answers this question affirmatively. In fact, \emph{all} equivalence relations aside from $\Id_1$ have continuum many strong minimal covers, and such covers can be chosen to be self-full.

\begin{thm}\label{continuum many smcovers}
	Let $R$ be any \er $\neq \Id_1$. Then there are continuum many strong minimal covers of $R$ which are self-full. 
\end{thm}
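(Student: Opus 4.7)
For each $\alpha \in 2^\omega$ I would construct an \er $S_\alpha$ satisfying (i) $R <_c S_\alpha$; (ii) $S_\alpha$ is self-full; (iii) every $X <_c S_\alpha$ is $\leqc R$; and (iv) $S_\alpha \nequivc S_\beta$ for $\alpha \neq \beta$. By Remark \ref{rmk:restrictions}, every $X \leqc S_\alpha$ is $\equivc S_\alpha \restriction W$ for some c.e.\ $W \subseteq \omega$, so (iii) rephrases as: for each c.e.\ $W$, either $S_\alpha \restriction W \leqc R$ or $S_\alpha \restriction W \equivc S_\alpha$. The construction enumerates $S_\alpha$ in stages relative to $\deg_T(R)$, with binary choices at designated stages governed by $\alpha$ yielding continuum many distinct $S_\alpha$.

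The plan is to embed $R$ onto the even numbers and designate the odd numbers to form ``extra'' classes that get totally quotiented into the even part by collapses enumerated during the construction (in the style of a generic cover). The main family of requirements is $N_e$: for each c.e.\ set $W_e$, either $S_\alpha \restriction W_e \leqc R$ or $S_\alpha \restriction W_e \equivc S_\alpha$. I would handle $N_e$ via a dichotomy modelled on Lemma \ref{lem:reducing to total pure quotients and finitely in odds}: if $W_e$ meets only finitely many classes containing odd elements, then $S_\alpha \restriction W_e \leqc R$ automatically by that lemma. Otherwise, the construction enumerates collapses so that every $S_\alpha$-class eventually contains an element of $W_e$, computably uniformly in $e$, providing a reverse reduction $S_\alpha \leqc S_\alpha \restriction W_e$. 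Additional requirements $P_e$ (diagonalizing $S_\alpha \not\leqc R$) and $Q_e$ (forcing self-fullness, i.e. that every $\phi_e \colon S_\alpha \leqc S_\alpha$ hits every class) are addressed by standard diagonalization. The $\alpha$-coding assigns, at the $k$th coding stage, a new class of $S_\alpha$ whose m-degree is drawn from one of two fixed m-incomparable sets according to $\alpha(k)$; by Lemma \ref{f respects m-degrees}, any $S_\alpha \leqc S_\beta$ must preserve these m-degrees, forcing $\alpha = \beta$.

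The main obstacle is reconciling the $N$-requirements with self-fullness and the $\alpha$-coding. The collapses enumerated for $N_e$ in the dense case may conflict with the avoidance needed for $Q_{e'}$ (which typically wants certain classes not to be realized inside particular ranges) and may threaten to disturb the m-degrees we have reserved for coding. A finite-injury priority argument ordering the three families of requirements appropriately resolves these conflicts. The most delicate technical step is the dense case of $N_e$: one must check that the enumerated collapses actually yield a computable uniform procedure reducing $S_\alpha$ to $S_\alpha \restriction W_e$, which amounts to showing that for each $S_\alpha$-class a representative in $W_e$ can be produced through the collapses recorded during the construction, and that this production is uniform in $e$ and survives subsequent injuries.
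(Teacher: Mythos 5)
Your plan is a genuinely different route from the paper's: you propose a single $\deg_T(R)$-relative priority construction building each $S_\alpha$ from scratch, with explicit requirements $N_e$, $P_e$, $Q_e$ and an m-degree-based distinctness scheme, whereas the paper sidesteps all of the priority machinery by invoking a single preconstructed ceer $E_0$ (the self-full strong minimal cover of $\Id$ from \cite[Theorem~4.10]{andrews2019joins}), quotienting it by $R$ on the evens to get $S_0$, and then taking $\mathcal{S}$ to be the continuum-sized family of quotients of $S_0$ that assign each odd-only $E_0$-class to one of the $\geq 2$ even-numbered $R$-classes. Distinctness and the antichain are obtained by counting (each $\equiv_c$-degree is countable) rather than by m-degree coding, and self-fullness falls out for free rather than being a separate requirement.

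There is, however, a real gap in your proposal centered on the ``dense case'' of $N_e$. The reverse reduction you want, $S_\alpha \leqc S_\alpha\restriction W_e$, must be witnessed by a \emph{computable} function, but the natural map ``send $x$ to the first element of $W_e$ found $S_\alpha$-equivalent to $x$'' is only $\deg_T(R)$-computable under your setup, since you enumerate the odd-to-even collapses relative to $\deg_T(R)$. What makes the paper's version work is a stratification you do not make explicit: the collapses responsible for the dichotomy (property (3) of $E_0$) live in a genuine ceer $E_0$ that is fixed once and for all, independently of $R$ and of the choice in $\mathcal{S}$, and the reduction searches for the first $W_e$-element $E_0$-equivalent to $x$ --- a computable search, correct because $S$ is a coarsening of $E_0$. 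For your construction to go through, the $N_e$-collapses have to be organized into an $R$- and $\alpha$-independent c.e.\ skeleton, with the $\alpha$-coding and $R$-dependence riding only on the remaining (non-c.e.) collapses of the leftover odd-only classes. As written, your construction conflates the two kinds of collapses, and the key reduction would not be computable.

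Two smaller points. First, your distinctness mechanism via reserved m-degrees is more delicate than needed and not clearly achievable: once every odd class is quotiented into an even $R$-class, the m-degree of each $S_\alpha$-class is forced to lie above the m-degree of the underlying $R$-class, so you cannot freely assign m-degrees; the paper's cardinality argument (continuum many quotients, countably many per $\equivc$-degree, and any two strong minimal covers of $R$ are automatically incomparable) avoids this entirely. Second, your proposal drops the hypothesis $R\neq \Id_1$; it is genuinely needed, since $\Id_1$ has $\Id_2$ as its unique minimal cover and the theorem fails for it.
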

\begin{proof}
	%FROM URI: This argument was definitely broken. I rewrote it after looking back at the old document. I'm not sure if this version or the one in the old document is better.
	In \cite[Theorem~4.10]{andrews2019joins},  it is proven that there is a ceer $E_0$ which satisfies the following properties:
\begin{enumerate}
\item $E_0\restriction \evens = \Id$;
\item  there are infinitely many classes which contain no even number;
\item  if $W$ is any c.e.\ set which intersects infinitely many $E_0$-classes which contain no even number, then $W$ intersects every $E_0$-class.
\end{enumerate}	
There, it is shown that such a  ceer is a self-full strong minimal cover of $\Id$. Here, we let $S_0$ be the quotient of $E_0$ formed by collapsing $2n$ with $2m$ if and only if $n \rel{R} m$. Note that $S_0\restriction \evens = R$.
	
	Let $\mathcal{S}$ be the set of quotients of $S_0$ which collapse every $S_0$-class which contains no even number to exactly one $S_0$-class which does contain an even number. That is, 
	\[
	\mathcal{S}:=\{{S_0}_{/A} : {S_0}_{/A} \restriction \evens = R\text{ and } [\evens]_{{S_0}_{/A}}=\omega\}.
	\]

Since $E_0$, and thus also $S_0$, has infinitely many classes which contain no even number, and $\abs{\omega_R}>1$, we have $\abs{\mathcal{S}}=2^{\aleph_0}$. Thus, there are continuum many elements of $\mathcal{S}$ which are not $\leqc R$, and there is a continuum sized $\leqc$-antichain in $\mathcal{S}$. It suffices to show that for $S\in \mathcal{S}$, if $X<_c S$, then $X\leqc R$.
	It suffices by Remark \ref{rmk:restrictions} to prove that either 
	$S \leqc S\restriction W$ or $S\restriction W \leqc R$ for any c.e.\ set $W$. 
	
	We argue by cases:
	\begin{enumerate}
		\item If $W$ intersects only finitely many $E_0$-classes which do not contain an even number, then we build a reduction of $S\restriction W$ to $R$ as follows:

		Let $a_1,\ldots a_n$ represent the $E_0$-classes which contain no even number and are intersected by $W$. Let $b_1,\ldots b_n$ be even numbers so that $a_i \rel{S} b_i$. Then define $g(x)$ to be the first member of $\evens \cup \{a_i : i\leq n\}$ found to be $E_0$-equivalent to $x$ (note that we are using that $E_0$ is a ceer). Then let $h(x)= g(x)$ if $g(x)$ is even and $h(x)=b_i$ if $g(x)=a_i$. This gives a reduction of $S\restriction W$ to $S$ whose range is contained in the evens. So, this gives a reduction of $S\restriction W$ to $S\restriction \evens = R$.
		
		\item If $W$ intersects infinitely many $E_0$-classes which do not contain an even number, then we know that $W$ intersects every $E_0$-class. We then give a reduction of $S$ to $S\restriction W$ by sending $x$ to the first member of $W$ found to be $E_0$-equivalent to $x$. Since $S$ is a quotient of $E_0$, this is the identity map on classes, so a reduction of $S$ to $S\restriction W$.
	\end{enumerate}
%	
%	\begin{enumerate}
%	\item On the one hand, suppose that $W$ hits only finitely many $E_0$-classes with representatives $a_0,\ldots,a_k$. For each $a_i$, fix non-uniformly an even number $b_i$ such that $a_i S b_i$. Then consider the following computable function $f:W\to \omega$,
%	\[
%	f(x)=\begin{cases}
%	b_i &\text{$x$ is odd and $xE_0 a_i$,}\\
%	x &\text{otherwise}.
%	\end{cases}
%	\]
%	Since any odd number in $W$ is equivalent to exactly one of the $a_i$'s and $E_0$ is a ceer, $f$ is well-defined and computable. Observe that $f$ reduces $S\restriction W$ to $S$ and $\range(f)\subseteq \evens$. Since $S\restriction \evens$ is equivalent to $R$, this means that $f$ gives the desired reduction from $S\restriction W$ to $R$.
%	\item On the other hand, suppose that $W$ hits infinitely many $E_0$-classes. Then, by the property of $E_0$, it hits every $E_0$-class. We obtain $S\leqc S\restriction W$ via the map $x\mapsto  y$, where $y$ is the first seen in $W$ so that $x \rel{E_0} y$: this is a reduction of $S$ to $S\restriction W$, since $x$ is always sent to a member of its own class.
%	\end{enumerate}
%	
%	Combining $(1)$ and $(2)$, we conclude that $S$ is a strong minimal cover of $R$.
	Lastly, we check that $S$ is self-full. Suppose $f$ is a function reducing $S$ to itself. Let $W$ be $\range(f)$. Since $R<_c S$, we cannot be in case (1) above, so $W$ must intersect every $E_0$-class, so also every $S$-class.
\end{proof}

\begin{cory}
	In $\ER$, every degree is branching.
\end{cory}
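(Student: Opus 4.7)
The plan is to invoke Theorem \ref{continuum many smcovers} and show that any two distinct strong minimal covers of an \er $R$ witness the branching of the degree of $R$. The first observation I would record is that distinct strong minimal covers of $R$ are automatically $\leqc$-incomparable: if $S_1, S_2$ were both strong minimal covers of $R$ with $S_1 <_c S_2$, then $S_1$ would be an \er strictly between $R$ and $S_2$, contradicting the minimality of $S_2$ as a cover of $R$.

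Given this, for any degree $\mathbf{d} \neq \bId_1$, I would fix a representative $R$ and apply Theorem \ref{continuum many smcovers} to select two distinct strong minimal covers $S_1, S_2$ of $R$; their degrees are incomparable by the previous observation. To verify that $\mathbf{d}$ is the meet of the degrees of $S_1$ and $S_2$, note that $\mathbf{d}$ is clearly a common lower bound. Conversely, any common lower bound $X$ satisfies $X \leqc S_1$ and $X \leqc S_2$; if $X \equivc S_1$, then $S_1 \equivc X \leqc S_2$, contradicting incomparability of $S_1, S_2$. Hence $X <_c S_1$, and the strong minimal cover property of $S_1$ over $R$ forces $X \leqc R$. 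Thus $\mathbf{d}$ is the greatest lower bound of the two incomparable degrees.

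The corollary must be read with $\mathbf{d} \neq \bId_1$ in mind: every degree other than $\bId_1$ lies above $\bId_2$, so any two incomparable nontrivial degrees share $\bId_2$ as a common lower bound, and so $\bId_1$ cannot literally be the meet of two incomparable degrees. All the real work has therefore been concentrated in Theorem \ref{continuum many smcovers}; once continuum-many strong minimal covers are in hand, the meet calculation above is essentially immediate, so I expect no substantial obstacle beyond the construction already performed in that theorem.
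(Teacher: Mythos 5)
Your proof is correct and takes the same approach as the paper: take two of the strong minimal covers of $R$ supplied by Theorem~\ref{continuum many smcovers}, check they are incomparable, and check their meet is $\mathbf{d}$. Your side observation about $\bId_1$ is also right—the paper's one-line proof silently relies on Theorem~\ref{continuum many smcovers}, which excludes $\Id_1$, and indeed $\bId_1$ is not branching (any two incomparable degrees both bound $\bId_2$), so the corollary should strictly be read as asserting that every degree other than $\bId_1$ is branching.
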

\begin{proof}
	Every degree $\mathbf{d}$ has two incomparable strong minimal covers. The meet of these two degrees is $\mathbf{d}$.
%	If $R$ is non-self full, then there are incomparable strong minimal covers. These must meet to $R$.
%	
%	If $R$ is self-full, then let $S_1$ be $R\oplus \Id_1$. Let $S_2$ be a generic cover of $R$. It follows from Lemma \ref{generic covers} that $S_1$ and $S_2$ meet to the degree of $R$.	
\end{proof}

So, contrary to the case of ceers, the self-full equivalence relations cannot be characterized in terms of their strong minimal covers. We ask:

\begin{question}
Is the collection of self-full degrees first-order definable in $\ER$?
\end{question}

\section{The complexity of the first-order theory of $\ER$}\label{sec:arithmetic}
In this last section, we characterize the complexity of $\Th(\ER)$,  the first-order theory of $\ER$. Our analysis contributes to a longstanding research thread. Indeed, computability theorists have been investigating the first-order complexity of degree structures generated by reducibilities for decades. 

Since a reducibility $r$ is typically a binary relation on subsets of $\omega$,   one can effectively
translate first-order sentences regarding the corresponding degree structure $\mathcal{D}_r$  to second-order
sentences of arithmetic, obtaining a $1$-reduction from 
$\Th(\mathcal{D}_r)$ to $\Th^2(\mathbb{N})$. Remarkably, the converse reduction  often holds, e.g., the first-order theories of the following degree structures are $1$-equivalent (and so, by Myhill Isomorphism Theorem, computably isomorphic) to
second-order arithmetic: the Turing degrees $\mathcal{D}_T$~\cite{simpson1977first}; the $m$-degrees $\mathcal{D}_m$, the $1$-degrees $\mathcal{D}_1$, the $tt$-degrees $\mathcal{D}_{tt}$, the $wtt$-degrees $\mathcal{D}_{wtt}$~\cite{nerode1980second}; and the enumeration degrees $\mathcal{D}_{e}$~\cite{slaman1997definability}.  
Here, we add $\ER$ to this list, namely, we prove:

\begin{thm}\label{ER equivalent to 2order arithmetic}
$\Th(\ER)$ is computably isomorphic to $\Th^2(\mathbb{N})$.
\end{thm}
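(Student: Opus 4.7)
The plan is to prove $\Th(\ER)\equiv_1\Th^2(\mathbb{N})$ in both directions, then invoke Myhill's Isomorphism Theorem to conclude computable isomorphism. The direction $\Th(\ER)\leq_1\Th^2(\mathbb{N})$ is routine: each $\ER$-degree is coded by any representative equivalence relation on $\omega$, which is a subset of $\omega\times\omega$; the predicates $\leqc$ and $\equivc$ between such representatives are arithmetical, and an effective induction on quantifier complexity translates every first-order sentence of $\ER$ into a sentence of $\Th^2(\mathbb{N})$.

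The nontrivial direction $\Th^2(\mathbb{N})\leq_1\Th(\ER)$ will be established by interpreting, with parameters, the standard model $\mathcal{N}_2=(\mathbb{N},+,\times,\leq;\mathcal{P}(\mathbb{N}),\in)$ inside $\ER$. The first-order universe comes almost for free from the work already done: by Corollary~\ref{definable cory}, $\bI$ is parameter-free definable and each $\bId_n$ is individually definable as the unique member of $\bI$ with exactly $n-1$ strict predecessors, so $(\bI,\leqc)$ is an isomorphic copy of $(\mathbb{N},\leq)$. The second-order universe will be coded using the definable set $\bF_2$ (definable by Corollary~\ref{definable cory} as the degrees bounding $\bId_2$ but not $\bId_3$): the assignment $X\mapsto E(X)$ maps $\mathcal{P}(\omega)$ into $\bF_2$, and by Lemma~\ref{f respects m-degrees} the $\leqc$-ordering on $\bF_2$ recovers the many-one degrees modulo complementation, which still code $\mathcal{P}(\mathbb{N})$ faithfully.

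To complete the interpretation, I must define the membership relation between the interpreted $\mathbb{N}$ and the coded sets, and the arithmetic operations $+,\times$ on the interpreted $\mathbb{N}$. Membership can be handled by a Slaman--Woodin-style parameter scheme: choose a finite tuple of $\ER$-parameters so that for each $X\subseteq\mathbb{N}$ a unique element of $\bF_2$ is picked out by a first-order formula in these parameters, and so that ``$n\in X$'' becomes expressible via a first-order condition involving the definable $\bId_n$ and the chosen parameters. Given this, $+$ and $\times$ can be introduced as further parameters coding their graphs on $\bI$, characterised first-order as the unique ternary relations satisfying the Peano axioms relative to $\leqc$. Alternatively, one invokes the Nerode--Shore theorem~\cite{nerode1980second} that $\Th(\mathcal{D}_m)\equiv_1\Th^2(\mathbb{N})$, and reduces the task to interpreting $\mathcal{D}_m$ within $\ER$ via the $\bF_2$-coding.

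The chief obstacle will be ensuring that the parameter scheme is robust enough: in particular, disambiguating the identification $E(X)\equivc E(\bar X)$, and certifying that every subset of the interpreted $\mathbb{N}$ is captured by some parameter choice. To handle both, I would exploit the rich cover and branching structure developed in Section~\ref{sec:covers}: the continuum-many self-full strong minimal covers of every degree (Theorem~\ref{continuum many smcovers}) and the definability of $\bDark$ and $\bLight$ (Corollary~\ref{cor:light and dark are definable}) furnish enough first-order witnesses to pin down parameters and to decode membership. Once the interpretation is formalised, both translations of sentences are computable, and Myhill's theorem delivers the computable isomorphism $\Th(\ER)\cong\Th^2(\mathbb{N})$.
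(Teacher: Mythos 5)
Your easy direction is fine, but the hard direction has a genuine gap at the key step, and the paper's actual argument takes a substantially different route.

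The central problem is the claim that $\bF_2$ ``recovers the many-one degrees modulo complementation, which still code $\mathcal{P}(\mathbb{N})$ faithfully.'' This is false. The $\equivc$-degree of $E(X)$ in $\ER$ carries only $m$-degree information about the pair $\{X,\overline{X}\}$: if $X$ and $X'$ are $m$-equivalent (for instance $X' = X\cup\{n\}$ for appropriate $n$), then $E(X)\equivc E(X')$. The map $X\mapsto \deg(E(X))$ therefore collapses each $m$-degree to a single point of $\bF_2$, so no first-order formula in $\ER$---with or without parameters---can recover the predicate $n\in X$ from $\deg(E(X))$ and $\bId_n$; the degree literally does not contain that information. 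The promised ``Slaman--Woodin-style parameter scheme'' for membership is never given, and I do not see what it could be, precisely because of this information loss. Likewise, the suggested appeal to Nerode--Shore does not transfer directly: $(\bF_2,\leqc)$ is not $\mathcal{D}_m$ but a quotient of $\mathcal{D}_m$ that identifies $\deg_m(X)$ with $\deg_m(\overline{X})$, so one would first have to prove that this quotient structure still interprets second-order arithmetic, which is additional work, not a shortcut. The final gesture toward the strong-minimal-cover and branching results of Section~\ref{sec:covers} is not connected to the membership problem in any way I can see.

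For comparison, the paper avoids the membership problem entirely by not trying to code subsets of $\omega$ via $\bF_2$ at all. Instead it codes \emph{second-order countable graphs} (whose theory is known to be $1$-equivalent to $\Th^2(\mathbb{N})$ by Lavrov) into $\bDark$: a graph $G$ is represented by a degree $\mathbf{c}$ so that the dark minimal degrees below $\mathbf{c}$ are the vertices and the edge relation is captured by the existence of covering pairs below $\mathbf{c}$; subsets of the vertex set are then coded by further dark degrees $\mathbf{a}$, with ``membership'' simply meaning ``is a dark minimal degree $\leq\mathbf{a}$.'' This makes membership a genuine first-order relation between degrees, bypasses the need to define $+$ and $\times$ on $\bI$, and lifts to $\ER$ because $\bDark$ is definable (Corollary~\ref{cor:light and dark are definable}). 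If you want to pursue your interpretation-based route, you would need a coding scheme in which each subset $S\subseteq\bI$ is assigned a parameter degree from which ``$\bId_n$ is in $S$'' is first-order recoverable---and $\bF_2$ cannot serve that purpose.
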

In fact, we will show that the theorem is also true for each of the definable substructures $\bDark$ and $\bLight$ of $\ER$.

\subsection{Our strategy}\label{coding graphs}
Equivalence relations are  straightforwardly encoded into subsets of $\omega$, hence $\Th(\ER)\leq_1 \Th^2(\mathbb{N})$ trivially holds. So, to prove Theorem \ref{ER equivalent to 2order arithmetic}, it suffices to prove the converse reduction.  Our strategy for coding  second-order arithmetic into $\ER$ is based on coding  all countable graphs as second order objects into this degree structure. The justification for such approach relies on well-known facts. Second-order arithmetic is $1$-reducible to second-order logic on countable sets, which is in turn $1$-reducible to the theory of second order countable graphs~\cite{lavrov1962effective}. So, one can effectively translate any question about second-order arithmetic into a question about a graph which encodes the standard model of Robinson's arithmetic Q. 

Finally, let us mention that our encodings are similar to the way in which graphs  are coded in $\mathbf{Ceers}$, as in \cite{andrews2020theory}. But there are three major differences. Firstly, in what follows we code any countable graph, rather than just computable graphs. Secondly, we must code subsets of the set of vertices of our graph. Thirdly, since we are giving codes for subsets, we do not need to code functions between different codings of natural numbers; that means that we do not need to distinguish the natural numbers from non-standard models of Robinson's Q as being embeddable into any other such model (thus needing to code functions), because the second order theory distinguishes the standard model of Robinson's Q as the only one with no proper inductive subset.

\subsection{Coding graphs into Dark}

To code graphs in $\mathbf{Dark}$, we heavily use dark minimal degrees:  We fix  a collection $\{D_i: i\in\omega\}$ of pairwise nonequivalent dark minimal equivalence relations. In fact, since $\mathbf{Ceers}$ is an initial segment of $\ER$, we may choose dark minimal ceers (as constructed in \cite{andrews2019joins}). 

%Why recall this now?
%Recall that  the equivalence classes of any dark minimal equivalence relation are pairwise computably inseparable, as shown by Lemma \ref{dark minimal implies r.i.}.

\begin{defn}
	Let $\mathbf{d_1},\mathbf{d_2}$ be two dark minimal degrees. We say that incomparable degrees $\mathbf{a},\mathbf{b}$ are a \emph{covering pair} of $\mathbf{d_1},\mathbf{d_2}$ if, for each $\mathbf{x}\in \{\mathbf{a},\mathbf{b}\}$, the set of dark minimal degrees below $\mathbf{x}$ is precisely $\{\mathbf{d_1},\mathbf{d_2}\}$, and there is no $\mathbf{y}< \mathbf{a},\mathbf{b}$ so that $\mathbf{d_1},\mathbf{d_2}< \mathbf{y}$.
\end{defn}

%LUCA: If the above is wrong, change it back to:
%\begin{defn}
%	Let $\mathbf{d_1},\mathbf{d_2}$ be two dark minimal degrees. We say that degrees $\mathbf{a},\mathbf{b}$ are a \emph{covering pair} of $\mathbf{d_1},\mathbf{d_2}$ if, for each $\mathbf{x}\in \{\mathbf{a},\mathbf{b}\}$, the set of dark minimal degrees below $\mathbf{x}$ is precisely $\{\mathbf{d_1},\mathbf{d_2}\}$, and there is no $\mathbf{y}\leq \mathbf{a},\mathbf{b}$ so that $\mathbf{d_1},\mathbf{d_2}\leq \mathbf{y}$.
%\end{defn}

We now describe how to encode a countable graph by parameters in $\mathbf{Dark}$.

\begin{defn}\label{definition of $G_c$}
	For any degree $\mathbf{c}$, let $G_{\mathbf{c}}$ be the graph with vertex set composed of the dark minimal degrees below $\mathbf{c}$ and  edges the collection of pairs $\mathbf{d_1},\mathbf{d_2}$ so that there are distinct $\mathbf{a},\mathbf{b}\leq \mathbf{c}$ which form a covering pair of $\mathbf{d_1},\mathbf{d_2}$.
\end{defn}

The next lemma provides an easy way of forming covering pairs of dark minimal equivalence relations.

\begin{lem}\label{covering pairs exist}
	If $D,E$ are dark minimal \ers, then 
	$D\oplus E$ and $D\oplus E_{/(0,1)}$ form a covering pair of $D$ and $E$.
\end{lem}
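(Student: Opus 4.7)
The plan is to verify separately each of the three conditions in the definition of a covering pair for $\mathbf{a} = [D \oplus E]$ and $\mathbf{b} = [D \oplus E_{/(0,1)}]$. First, both $D \oplus E$ and $D \oplus E_{/(0,1)}$ are pure quotients of $D \oplus E$ (the identification $(0,1)$ merges an even class with an odd class and does not collapse two evens or two odds), so Lemma \ref{obs:pure quotients are ubs} makes both upper bounds of $D$ and $E$. Moreover, each is dark: an infinite c.e.\ transversal, restricted to $\evens$ or to $\odds$ (one must contain infinitely many of its elements), would yield an infinite c.e.\ transversal of $D$ or of $E$, contradicting their darkness.

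For incomparability, the direction $D \oplus E \nleqc D \oplus E_{/(0,1)}$ follows immediately from Lemma \ref{dark not reduces to proper quotient}, as $D \oplus E_{/(0,1)}$ is a proper quotient of the dark relation $D \oplus E$. For the reverse, suppose $f \colon D \oplus E_{/(0,1)} \leqc D \oplus E$. Since the collapse involves one even and one odd, the restrictions satisfy $D \oplus E_{/(0,1)} \restriction \evens \equivc D$ and $D \oplus E_{/(0,1)} \restriction \odds \equivc E$, so $x \mapsto f(2x)$ gives $D \leqc D \oplus E$. A case analysis via Lemma \ref{lem:below a join}, using the mutual $\leqc$-incomparability of $D$ and $E$ (both are nonequivalent dark minimals) together with the self-fullness of $D$ (so that $D \oplus F >_c D$ for every nonempty $F \in \F$ by Corollary \ref{cor:minimal covers of self-full}), rules out every branch except the degenerate one in which $f(2x)$ is always even. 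Symmetrically, $f(2y+1)$ is always odd, whence $f(0)$ and $f(1)$ lie in opposite parities and so cannot be $D \oplus E$-equivalent; yet $0$ and $1$ are $D \oplus E_{/(0,1)}$-equivalent, a contradiction.

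For the classification of dark minimal degrees, let $D'$ be dark minimal below $D \oplus E$ or below $D \oplus E_{/(0,1)}$. The case $D' \leqc D \oplus E$ is handled directly by Lemma \ref{lem:below a join} and the same case analysis as above, forcing $D' \equivc D$ or $D' \equivc E$. For $D' \leqc D \oplus E_{/(0,1)}$ via some $g$: if $g$ avoids the merged class $[0]_{D \oplus E_{/(0,1)}}$, or if $g$ maps its preimage of that class entirely into one parity, then $g$ is simultaneously a reduction to $D \oplus E$, reducing to the previous case. Otherwise, letting $X$ denote the $D'$-class sent to $[0]_{D \oplus E_{/(0,1)}}$, the computable set $C = \{x : g(x) \in \evens\}$ meets both $X$ and its complement; by Lemma \ref{dark minimal implies r.i.}, $C$ cannot separate two $D'$-classes, so all $D'$-classes other than $X$ must map uniformly to a single parity. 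If that parity is even, then $h(x) := g(x)/2$ when $g(x) \in \evens$ and $h(x) := 0$ otherwise is a computable reduction $D' \leqc D$ (every element of $X$ lands in $[0]_D$, and every other class maps injectively to a distinct $D$-class different from $[0]_D$), so $D' \equivc D$ by dark minimality; the odd case is symmetric and yields $D' \equivc E$.

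Finally, for the nonexistence of a common strict lower bound, suppose $Y$ satisfies $D, E <_c Y <_c D \oplus E$ and $Y <_c D \oplus E_{/(0,1)}$. From $D, E \leqc Y \leqc D \oplus E$, the same case analysis via Lemma \ref{lem:below a join} forces $Y \equivc D \oplus E$; but then $Y \leqc D \oplus E_{/(0,1)}$ yields $D \oplus E \leqc D \oplus E_{/(0,1)}$, contradicting the incomparability just established. The main obstacle throughout is the mixed-parity subcase in the classification of dark minimal degrees below $D \oplus E_{/(0,1)}$, where a naive modification of $g$ would need to test membership in the c.e.\ but possibly noncomputable class $X$; the key idea is to use the computable inseparability of $D'$-classes to route this test through the computable condition \textquotedblleft$g(x) \in \evens$\textquotedblright{} instead. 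The remaining work is a systematic application of Lemma \ref{lem:below a join} combined with dark minimality and self-fullness.
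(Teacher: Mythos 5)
Your proof is correct and follows essentially the same route as the paper's, relying on the same small toolbox: purity of quotients (Lemma~\ref{obs:pure quotients are ubs}), computable inseparability of classes of dark minimal \ers (Lemma~\ref{dark minimal implies r.i.}), the decomposition of anything below a uniform join (Lemma~\ref{lem:below a join}), dark minimality, and Lemma~\ref{dark not reduces to proper quotient} for one direction of incomparability. The one genuine structural difference is the order of deduction: the paper first proves (via the Lemma~\ref{lem:below a join} case analysis) that nothing strictly $<_c D\oplus E$ can bound both $D$ and $E$, and then \emph{derives} $D\oplus E_{/(0,1)}\nleqc D\oplus E$ as an immediate consequence, since $D\oplus E_{/(0,1)}$ is such an upper bound. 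You instead prove this direction of incomparability directly by pinning the parities of $f(2x)$ and $f(2y+1)$ and contradicting the collapse of $0$ with $1$; this is correct, and your invocation of self-fullness (to kill the branch $D\equivc D\oplus F$ with $F\in\F$ nonempty) is genuinely needed there, but it duplicates work you then redo in the no-common-lower-bound step. The paper's ordering is more economical. Your treatment of dark minimal degrees below $D\oplus E_{/(0,1)}$ via the computable set $C=\{x: g(x)\in\evens\}$ and the rerouting map $h$ is a more explicit rendering of the same inseparability argument the paper gives (the paper phrases it as: $\range(f)$ lies in $\evens\cup[1]$ or $\odds\cup[0]$, then composes with a collapsing map). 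Two small superfluities: the observation that $D\oplus E$ and $D\oplus E_{/(0,1)}$ are dark is true but not required by the definition of covering pair, and you tacitly assume $D\nequivc E$ (calling them ``nonequivalent dark minimals''), which is consistent with the lemma's intended use in the graph coding but is not literally stated in the hypotheses.
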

\begin{proof}

	It is immediate that $D$ and $E$ are both computably reducible to $D\oplus E$ and $D\oplus E_{/(0,1)}$ (the latter being a pure quotient). 
	
	We show that the only dark minimal degrees below either $D\oplus E$ or $D\oplus E_{/(0,1)}$ are the degrees of $D$ and $E$.
	
	Suppose $f\colon  X\leqc D\oplus E$, for a dark minimal $X$. Since $X$ is dark minimal, its equivalence classes are computably inseparable by Lemma \ref{dark minimal implies r.i.}, so $\range(f)$ must be either contained in the evens or the odds, which implies $X\leqc D$ or $X\leqc E$. But then $X\equivc D$ or $X\equivc E$, by minimality of $D$ and $E$.
	
	On the other hand, suppose $f\colon X\leqc D\oplus E_{/(0,1)}$, for a dark minimal  $X$. Since the equivalence classes of $X$ are computably inseparable by Lemma \ref{dark minimal implies r.i.}, $\range(f)$ is contained in either
	\begin{enumerate}
		\item  $\evens \cup [1]_{D\oplus E_{/(0,1)}}$;
		\item  or $\odds\cup [0]_{D\oplus E_{/(0,1)}}$.
	\end{enumerate}	
	Without loss of generality, we assume the former. Let $h$ be the function given by $h(x)=x$ if $x$ is even and $0$ if $x$ is odd. Then $h\circ f\colon  X\leqc D\oplus E_{/(0,1)}$ and $\range(h\circ f)\subseteq \evens$. This induces a  reduction of $X$ to  $D$. But then $X\equivc D$, by minimality of $D$.

	Next, we consider the degrees strictly below $D\oplus E$ which might bound both $D$ and $E$. Suppose that $X\leqc D\oplus E$. Then by Lemma \ref{lem:below a join}, $X\equivc D_0\oplus E_0$ where $D_0\leqc D$ and $E_0\leqc E$. So either
	\begin{enumerate}
		\item $X\in \F$,
		\item or $X\equivc D\oplus E$,
		\item or $X\equivc D\oplus F$ for some $F\in \F$,
		\item or $X\equivc E\oplus F$ for some $F\in \F$.
	\end{enumerate}	
	In the first case, $X$ obviously does not bound $D$ or $E$. In the second, $X$ is not strictly below  $D\oplus E$. In cases $(3)$ and $(4)$, $X$ does not bound both $D$ and $E$. To see this, suppose $X\equivc D\oplus F$ for some $F\in \F$. Then any reduction of $E$ to $X$ gives a reduction of $E$ to $D\oplus F$. But by computable inseparability of the classes of $E$, this reduction is either contained in the evens, giving $E\leqc D$, or contained in the odds,  giving $E$ is finite, either way leading to a contradiction. Thus, there is no \er $X$ which is strictly reducible to $D\oplus E$ and bounds both $D$ and $E$. 
	
	Next we observe that $D\oplus E$ and $D\oplus E_{/(0,1)}$ are incomparable. The fact that $D\oplus E\not\leqc D\oplus E_{/(0,1)}$ follows from darkness of $D\oplus E$ and Lemma \ref{dark not reduces to proper quotient}. The fact that $D\oplus E_{/(0,1)}\not<_c D\oplus E$ follows from the previous paragraph.
%	
%	
%	
%	
%	
%	
%
% 
%
% \lucainsert{On the other hand, towards a contradiction suppose that there is $i\colon D\oplus E_{/(0,1)}  \leqc D\oplus E$. It is convenient to denote by $Z$ the equivalence class of $D\oplus E_{/(0,1)}$ containing $\{0,1\}$, i.e.,
%	\[	
%	Z:={[0]}_{{D\oplus E}_{/(0,1)}}.
%	\]	
%	Without loss of generality, assume that $i^\star$ maps $Z$ to a $D$-class, i.e., $i(Z)\subseteq \evens$. Next, define the following computable set:
%	\[
%	C:=\{x: i(2x+1) \mbox{ is even}\}.
%	\]	 
%	Since $[1]_{E}\subseteq Z$, we deduce  that $[1]_{E}\subseteq C$. Moreover, there must be $V\in\omega_E$  such that $V\cap C=\emptyset$, as otherwise $E\leqc D$. But then $C$ computably separates $[1]_E$ from $V$, contradicting Lemma~\ref{dark minimal implies r.i.}.
%}

Finally, by incomparability of $D\oplus E$ and $D\oplus E_{/(0,1)}$, any degree below both would have to be strictly below $D\oplus E$, so cannot bound both $D$ and $E$.
\end{proof}

We are ready to show that we can uniformly code any countable graph as a second order structure into $\bDark$, which, combined with the remarks offered in Section \ref{coding graphs},  will yield the following theorem.

\begin{thm}
	The theory of the degree structure $\bDark$ is computably isomorphic to second-order arithmetic.
\end{thm}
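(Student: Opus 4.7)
The overall strategy, as outlined in Section~\ref{coding graphs}, is to code every countable graph as a parameter in $\bDark$, so that the second-order theory of countable graphs interprets into $\Th(\bDark)$. Since second-order arithmetic $1$-reduces to the second-order theory of countable graphs via \cite{lavrov1962effective}, and the reverse reduction $\Th(\bDark)\leq_1\Th^2(\mathbb{N})$ is trivial, Myhill's isomorphism theorem will then complete the proof. Given a countable graph $G=(V,E)$ with $V\subseteq\omega$, the plan is to associate to $G$ the \er
\[
R_G \;:=\; \bigoplus_{i\in V} D_i \;\oplus\; \bigoplus_{(i,j)\in E} (D_i\oplus D_j) \;\oplus\; \bigoplus_{(i,j)\in E} (D_i\oplus D_j)_{/(0,1)},
\]
which is dark (a uniform join of dark \ers is dark), and to prove that $G_{[R_G]}\cong G$.

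For the analysis of the dark minimal degrees below $[R_G]$, any $X\leq_c R_G$ decomposes via iterated application of Lemma~\ref{lem:below a join} as $\bigoplus_n X_n$, with each $X_n\leq_c E_n$, where $(E_n)_n$ enumerates the components of $R_G$. If $X$ is dark minimal, computable inseparability of its classes (Lemma~\ref{dark minimal implies r.i.}) forces any reduction $X\to\bigoplus_n X_n$ to factor through a single $X_n$, so by minimality $X\equivc X_n\leq_c E_n$. Case analysis on $E_n\in\{D_k,\,D_k\oplus D_l,\,(D_k\oplus D_l)_{/(0,1)}\}$, essentially as inside the proof of Lemma~\ref{covering pairs exist}, then yields $X\equivc D_i$ for some $i\in V$. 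Thus the vertex set of $G_{[R_G]}$ is exactly $\{[D_i]:i\in V\}$.

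For each edge $(i,j)\in E$, the degrees $[D_i\oplus D_j]$ and $[(D_i\oplus D_j)_{/(0,1)}]$ lie below $[R_G]$ and form a covering pair of $[D_i],[D_j]$ by Lemma~\ref{covering pairs exist}, witnessing that $(i,j)$ is an edge of $G_{[R_G]}$. Conversely, suppose $(i,j)\notin E$ with $i,j\in V$ and that some $\mathbf{a},\mathbf{b}\leq [R_G]$ form a covering pair of $[D_i],[D_j]$. Applying the decomposition-plus-inseparability analysis to $\mathbf{a}$, each of $D_i$ and $D_j$ reduces into $\mathbf{a}$ through a single component $A_{n_i}, A_{n_j}$. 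Since the only components of $R_G$ that bound two distinct dark minimal degrees are those indexed by edges, and $(i,j)\notin E$, we have $n_i\neq n_j$, whence $D_i\oplus D_j\leq_c A_{n_i}\oplus A_{n_j}\leq_c \mathbf{a}$; the same holds for $\mathbf{b}$. Setting $\mathbf{y}:=[D_i\oplus D_j]$, either $\mathbf{y}$ lies strictly below both $\mathbf{a}$ and $\mathbf{b}$, contradicting the minimality clause of the covering pair definition, or $\mathbf{y}$ equals one of $\mathbf{a},\mathbf{b}$ while lying below the other, contradicting their incomparability.

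Second-order quantification over subsets of $V$ is handled uniformly: for $V'\subseteq V$, the parameter $[\bigoplus_{i\in V'} D_i]$ has dark minimal downset exactly $\{[D_i]:i\in V'\}$ by the same reasoning used for vertices, and intersecting with the vertex set of $G_{[R_G]}$ recovers $V'$. Because the predicates \emph{minimal in $\bDark$}, \emph{covering pair}, and \emph{$\leq$} are all first-order definable in $\bDark$, this construction yields a uniform first-order interpretation of the second-order theory of countable graphs inside $\Th(\bDark)$. The main obstacle is the non-edge direction of the edge analysis, where spurious covering pairs must be ruled out: the key leverage is the combination of the rigidity enforced by computable inseparability of dark minimal classes with the minimality clause in the covering pair definition.
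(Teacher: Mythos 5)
Your overall strategy is the same as the paper's---code each countable graph $G$ via dark minimal degrees as vertices and covering pairs (in the sense of Lemma~\ref{covering pairs exist}) as edges, code subsets of the vertex set by downward closures, and appeal to \cite{lavrov1962effective} plus Myhill's theorem---and your edge/non-edge analysis is essentially the one in the paper's proof of Lemma~\ref{build C}. However, there is a genuine gap right at the start: the parenthetical claim that $R_G := \bigoplus_{i\in V} D_i \oplus \bigoplus_{(i,j)\in E}(D_i\oplus D_j)\oplus\bigoplus_{(i,j)\in E}(D_i\oplus D_j)_{/(0,1)}$ ``is dark (a uniform join of dark \ers is dark)'' is \emph{false} once the graph is infinite. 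The natural countable uniform join $\bigoplus_{n}E_n$, defined by $\langle x,i\rangle\sim\langle y,j\rangle \iff i=j \wedge x\rel{E_i}y$, has the computable transversal $n\mapsto\langle 0,n\rangle$, so it is \emph{light}, not dark. Thus $R_G\notin\bDark$ and your $\mathbf{c}$ is not a parameter in the structure whose theory you are interpreting into; the entire coding collapses.

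This is precisely the obstacle that the paper's construction is designed to overcome. Instead of the naive infinite uniform join, the paper fixes an immune set $S$ and places the coding \ers $X_n$ on the columns indexed by the $n$th element of $S$, while merging \emph{all} columns with index outside $S$ into a single huge equivalence class. Darkness is then proved by a dichotomy (Claim~\ref{lem:coding element is dark}): a c.e.\ set meeting infinitely many columns must, by immunity, hit two columns outside $S$ (which are $C$-equivalent), while a c.e.\ set meeting only finitely many columns lies in a finite uniform join of dark ceers, which is dark. Your subsequent decomposition argument for dark minimal $X\leqc\mathbf{c}$ and the non-edge direction do go through once this replacement is made (the computable-inseparability argument forcing a reduction of a dark minimal into a single column, and the resulting $D_i\oplus D_j\leqc\mathbf{a},\mathbf{b}$ contradiction, are exactly the paper's argument), but as written the proposal produces a light witness and cannot be repaired without introducing the immune-set device or something equivalent. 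A secondary, more cosmetic issue: you appeal to ``iterated application of Lemma~\ref{lem:below a join}'' to decompose an arbitrary $X\leq_c R_G$ as an infinite uniform join; that lemma is stated for binary joins and the paper deliberately avoids the infinite decomposition, instead arguing directly from the computable inseparability of the classes of the relevant dark minimal \ers that the reduction must land (up to the merged class) in a single column.
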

\begin{proof}
We first embed any countable graph as a first-order structure into $\bDark$. 
	\begin{lem}\label{build C}
		For any countable graph $G$, there is some $\mathbf{c}\in \bDark$ so ${G_{\mathbf{c}}\cong G}$. 
	\end{lem}
	\begin{proof}
		We may assume that the universe of $G$ is $\omega$ (if $G$ is finite, then the dark ceer $C$ constructed below can be taken simply as the uniform join of $D_i$ and $D_u\oplus {D_v}_{/(0,1)}$ for pairs where $u \rel{G} v$). 
		Recall that $\{D_i: i\in \omega\}$ represents a collection of distinct dark minimal degrees. 

%FROM URI: Trying to implement Luca's idea about immunity:

Let $X$ be the collection of \ers 
\[
\{D_i : i\in \omega\}\cup \{{D_i\oplus D_j}_{/(0,1)} : i \rel{G} j\}
\]
 and fix an enumeration of $X=(X_i)_{i\in \omega}$.
Fix $S$ to be an immune set. Then we define $C$ by $\langle x,i\rangle \rel{C} \langle y,j\rangle $ if and only either $i=j$ is the $n$th element of $S$ and $x \rel{X_n} y$ or $i,j\notin S$.	

We now argue that $C$ is dark and $G_{\mathbf{c}}\cong G$, where $\mathbf{c}$ is the degree of $C$. The proof is split into several claims.

\begin{claim}\label{lem:coding element is dark}
$C$ is dark.
\end{claim}
\begin{proof}
If $W_e$ intersects infinitely many columns of $\omega$, then by immunity of $S$, it enumerates two elements $\langle x, i\rangle, \langle y, j\rangle$ with $i,j\notin S$. But then $\langle x, i\rangle \rel{C} \langle y, j\rangle$ and $W_e$ is not a transversal.

If $W_e$ intersects only finitely many columns, then $W_e$ is enumerating a subset of $Y=\{\langle x, i\rangle : i\leq m\}$ for some $m$. But $C\restriction Y$ is equivalent to a finite uniform join of dark ceers $X_i$. Thus $W_e$ cannot be a transversal.
%
%Suppose $W_e$ is infinite. If $W_e$ intersects some column infinitely often, then the darkness of each column ensures that $W_e$ is not a transversal. Otherwise, $W_e$ intersects infinitely many columns, so by item $(b)$ of stage $3e$ of the construction, we have that there exists  $i\in W_e\cap [0]_C$. Then $W_e\setminus \{i\}$ is also an c.e.\ set which intersects infinitely many columns. Thus, it must also intersect $[0]_C$. But then $W_e$ is not a transversal. 
\end{proof}

Next we see that the only dark minimal degrees bounded by $\mathbf{c}$, i.e., those which are vertices in $G_\mathbf{c}$, are $\{D_i : i\in \omega \}$.

	\begin{claim}\label{No Rando Below C}
	If $D\leqc C$ and $D$ is dark minimal, then $D\equivc D_u$ for some $u$.	
\end{claim}
\begin{proof}
	Since $D$ is dark minimal, its classes are computably inseparable by Lemma~\ref{dark minimal implies r.i.}. So, either $D\leqc D_u$, for some $u$, or $D\leqc D_i\oplus {D_j}_{/(0,1)}$, for some pair $i,j$. In the former case, dark minimality of $D_u$ ensures $D\equivc D_u$, and in the latter case Lemma \ref{covering pairs exist} ensures $D\equivc D_i$ or $D\equivc D_j$.
\end{proof}

		We now know that the map $i\mapsto \mathbf{d}_i$ is onto $G_{\mathbf{c}}$. It only remains to show that it is an embedding of $G$.

\begin{claim}
If $u \rel{G} v$, then $u \rel{G_\mathbf{c}} v$.
\end{claim}

\begin{proof}
	There are three columns of $C$, coding $D_u, D_v$, and ${D_u\oplus D_v}_{/(0,1)}$. Therefore, $D_u\oplus D_v, {D_u\oplus D_v}_{/(0,1)}$ are both $\leqc C$. By Lemma \ref{covering pairs exist}, these form a covering pair of $D_u$ and $D_v$, so we have $u \rel{G_\mathbf{c}} v$.
%First, observe that $C$ codes both $D_u$ and $D_v$, i.e., there are numbers $\hat{u}$ and $\hat{v}$ (appointed, respectively, at stages $u$ and $v$) such that $\col_{\hat{u}}$ and $\col_{\hat{v}}$ are coding columns for $D_u$ and $D_v$. Such columns are never collapsed in further stages. It follows that $D_u$ and $D_v$, which are dark and minimal, are reducible to $C$, and so is their uniform join $D_u\oplus D_v$. Moreover, since $u\rel{G}v$ holds, there must a stage of the form $3e+2$ in which a column codes $D_u\oplus {D_v}_{/(0,1)}$, and therefore such equivalence relation is also below $C$. By Lemma \ref{covering pairs exist}, $D_u\oplus D_v$ and $D_u\oplus {D_v}_{/(0,1)}$ form a covering pair of $D_u$ and $D_v$, and therefore, by Definition \ref{definition of $G_c$}, we have that $u \rel{G_\mathbf{c}} v$.
\end{proof}

\begin{claim}
			If $u \rel{G_\mathbf{c}} v $, then $u \rel{G} v$. 
		\end{claim}
		\begin{proof}
			Suppose that $\mathbf{a},\mathbf{b}\leq \mathbf{c}$ form a covering pair of $\mathbf{d}_u$ and $\mathbf{d}_v$ and $u,v$ are not adjacent in $G$.
			Let $A\in \mathbf{a}$, $B\in \mathbf{b}$, $D_u\in \mathbf{d}_u$ and $D_v\in \mathbf{d}_v$. Consider the composite reductions $f_u\colon  D_u\leqc A\leqc C$ and $f_v\colon  D_v\leqc A\leqc C$. By computable inseparability of the classes of $D_u$ (Lemma \ref{dark minimal implies r.i.}), $\range(f_u)$ must be contained in a single column of $C$. By incomparability of the dark minimal \ers and Lemma \ref{covering pairs exist}, this column must be either $D_u$ or $D_u\oplus {D_w}_{/(0,1)}$ for some $w$ with $u \rel{G} w$. In particular, the column used for $f_u$ cannot be the same as the column used for $f_v$. It follows that $D_u\oplus D_v\leqc A$. Similarly for $B$, contradicting $\mathbf{a}$ and $\mathbf{b}$ forming a covering pair of $\mathbf{d}_u,\mathbf{d}_v$.
		\end{proof}

	This completes the proof of Lemma \ref{build C}.
\end{proof}

	Next, we show that for any $\mathbf{c}$, we can code any subset of $G_\mathbf{c}$.

	\begin{lem} Let $E$ be a countable set of dark minimal degrees.
		There is a degree $\mathbf{a}\in \bDark$ so that the set of dark minimal degrees $\leq \mathbf{a}$ is exactly $E$.
	\end{lem}
	\begin{proof}
Apply the construction of the dark equivalence relation $C$ of Lemma \ref{build C} to the empty graph and the collection of degrees in $E$. That is, let $(E_i)_{i\in \omega}$ be dark minimal \ers representing the classes in $E$. Then let $\langle x, i \rangle \rel{C} \langle y, j \rangle$ if and only if $i=j$ is the $n$th element of $S$ (a fixed immune set) and $x \rel{E_{n}} y$ or if $i,j\notin S$.
Lemma \ref{No Rando Below C} shows that the degrees of dark minimal \ers below $C$ are precisely $E$, and Lemma \ref{lem:coding element is dark} shows that $C$ is dark.
	\end{proof}
	
	For $\mathbf{a}\in \bDark$, let $M_\mathbf{a}$ be the set of dark minimal degrees $\leq \mathbf{a}$.
	Put together, we now know that every second order countable graph is encoded as $(G_{\mathbf{c}}, \mathcal {A})$ for some $\mathbf{c}\in \bDark$, where $\mathcal{A}$ is the set of $M_\mathbf{a}$ for $\mathbf{a}\in \bDark$ which are contained in $G_\mathbf{c}$.
	
	So, $\Th(\bDark)$ is $\geq_1$ the theory of second order countable graphs. As remarked in Section \ref{coding graphs}, this is enough to conclude that $\Th(\bDark)$ is computably isomorphic to second-order arithmetic. Then, Theorem \ref{ER equivalent to 2order arithmetic} immediately follows from the fact that $\bDark$ is definable in $\ER$ (Corollary \ref{cor:light and dark are definable}). 	
\end{proof}

\subsection{Coding graphs into Light}
We now focus on light degrees, with the goal of showing that $\Th(\bLight)$ is also computably isomorphic to second-order arithmetic. The encoding of graphs in the light degrees will be as follows:

\begin{defn}
	A degree $\mathbf{e}$ is a light minimal degree if $\bId<\mathbf{e}$ and there is no $\mathbf{x}$ so that $\mathbf{Id}< \mathbf{x}<\mathbf{e}$.
	
	Let $\mathbf{e}_1,\mathbf{e}_2$ be two light minimal degrees. We say that $\mathbf{a},\mathbf{b}$ are a \emph{light covering pair} of $\mathbf{e}_1,\mathbf{e}_2$ if for each $\mathbf{x}\in \{\mathbf{a},\mathbf{b}\}$, the set of light minimal degrees below $\mathbf{x}$ is precisely $\{\mathbf{e}_1,\mathbf{e}_2\}$ and there is no $\mathbf{y}$ below $\mathbf{a}$ and $\mathbf{b}$ which is above $\mathbf{e}_1,\mathbf{e}_2$.
\end{defn}

\begin{defn}
	For  a light degree $\mathbf{c}$, let $H_{\mathbf{c}}$ be the graph with vertices the light minimal degrees below $\mathbf{c}$ and edges the collection of pairs $\mathbf{e}_1,\mathbf{e}_2$ so that there are $\mathbf{a},\mathbf{b}\leq \mathbf{c}$ which form a light covering pair of $\mathbf{e}_1,\mathbf{e}_2$.
\end{defn}

%\begin{lem}
%	There are infinitely many dark minimal degrees that bound no \er in $\F\setminus \I$.
%\end{lem}
%\begin{proof}
%	Let $X$ be any dark minimal ceer. As the ceers form an initial segment in $\ER$, it follows that $X$ bounds no finite \er outside of $\I$.
%\end{proof}

We now show that we can uniformly encode every second order countable graph into $\bLight$. 

\begin{thm}
	The theory of $\bLight$ is computably isomorphic to second-order arithmetic.
\end{thm}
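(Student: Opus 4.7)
The proof parallels the dark case, with \emph{light minimal} degrees now serving as the coding basis in place of dark minimal degrees. The plan has three phases.

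\medskip

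\textbf{Setup of the coding basis.} Apply Theorem~\ref{continuum many smcovers} with $R = \Id$ to obtain continuum many pairwise incomparable self-full strong minimal covers of $\bId$; each such \er is a light minimal degree, since strong minimality forces anything strictly below to be $\leqc \Id$. Fix a countable pairwise non-equivalent subfamily $\{E_i : i \in \omega\}$ of such \ers, each self-full by construction.

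\medskip

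\textbf{Light covering pairs.} The core technical step is a light analog of Lemma~\ref{covering pairs exist}: for distinct $E_i, E_j$ there exist incomparable \ers $A_{i,j}, B_{i,j}$ whose degrees form a light covering pair of those of $E_i, E_j$. The natural candidates are $A_{i,j} := E_i \oplus E_j$ and a companion $B_{i,j}$, most naturally a pure quotient such as $(E_i \oplus E_j)_{/(0,1)}$. That the only light minimal degrees below the degree of $A_{i,j}$ are those of $E_i$ and $E_j$ follows from Lemma~\ref{lem:below a join}: any $X \leqc E_i \oplus E_j$ decomposes up to $\equivc$ as $X_1 \oplus X_2$ with $X_\ell \leqc E_\ell$; the strong minimal cover property restricts each $X_\ell$ to either $X_\ell \leqc \Id$ or $X_\ell \equivc E_\ell$; self-fullness of $E_\ell$ forbids $E_\ell \oplus Y \equivc E_\ell$ for nontrivial $Y \leqc \Id$; and a four-way case analysis forces $X \equivc E_i$ or $X \equivc E_j$. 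The same decomposition, tracked through the single collapsed pair, gives the analogous statement for $B_{i,j}$. The main obstacle I expect is the incomparability $A_{i,j} \nequivc B_{i,j}$: in the dark case, Lemma~\ref{dark not reduces to proper quotient} gave this for free, but light \ers can in general reduce to their proper quotients. The approach I would take exploits self-fullness of $E_i \oplus E_j$ (inherited from that of each $E_\ell$): any putative reduction $A_{i,j} \leqc B_{i,j}$ must inject the $A_{i,j}$-classes into the $B_{i,j}$-classes, and at most one of them can land in the merged class of $B_{i,j}$; composing with a carefully chosen c.e.\ selector of representatives should yield a self-reduction of $A_{i,j}$ missing at least one class, contradicting self-fullness. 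The non-existence of a common lower bound of $A_{i,j}$ and $B_{i,j}$ above $E_i$ and $E_j$ reduces again to Lemma~\ref{lem:below a join}. If the quotient $(E_i \oplus E_j)_{/(0,1)}$ turns out to be $\equivc E_i \oplus E_j$, the fallback is to replace $B_{i,j}$ by a different pure quotient (for instance, collapsing several classes, or collapsing classes chosen using the specific structure of the $E_i$'s furnished by Theorem~\ref{continuum many smcovers}, where each $E_i \restriction \evens \equivc \Id$).

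\medskip

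\textbf{Coding, subsets, and conclusion.} With the covering pair lemma in hand, the immune-set coding of Lemma~\ref{build C} transfers: given a countable graph $G$ on vertex set $\omega$, enumerate the family
\[
\{E_i : i \in \omega\} \cup \{A_{i,j}, B_{i,j} : i \rel{G} j\}
\]
onto the columns of $\omega$ indexed by an immune set $S$, collapsing the remaining columns into a single class to produce an \er $C$. Since each $E_i$ is light and embeds into $C$ via its column, $\Id \leqc C$, so $C$ is light. Immunity of $S$, combined with the decomposition arguments used in the covering pair analysis, shows that the light minimal degrees below the degree $\mathbf{c}$ of $C$ are exactly those of $\{E_i : i \in \omega\}$, in bijection with vertices of $G$, and that edges of $G$ correspond to light covering pairs in $H_{\mathbf{c}}$. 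The coding of arbitrary subsets of $\{E_i\}$ proceeds exactly as in the $\bDark$ proof, by running the same construction on the empty graph restricted to the chosen subset. Together with the trivial reduction $\Th(\bLight) \leq_1 \Th^2(\mathbb{N})$, this gives $\Th(\bLight) \equiv_1 \Th^2(\mathbb{N})$.
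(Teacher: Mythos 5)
Your proposal attempts a direct light analogue of the dark coding, using self-full strong minimal covers of $\Id$ as the light minimal basis and building light covering pairs as $E_i\oplus E_j$ and a pure quotient thereof. The paper takes a genuinely different route: it does \emph{not} try to build light covering pairs from scratch, but instead transports the entire dark coding into $\bLight$ via the map $\iota(D)=D\oplus\Id$, proves $\iota$ is a homomorphism onto an initial segment of $\bLight$ carrying dark minimal degrees to light minimal ones, and then uses a separate generic-permutation construction to encode arbitrary subsets of a given family of light minimal degrees. This design is not incidental; it is there precisely to avoid the obstacles your proposal runs into.

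There are two genuine gaps. First, the incomparability $E_i\oplus E_j \nequivc (E_i\oplus E_j)_{/(0,1)}$, which you flag, is not resolved by the sketch you give. Your plan is to ``compose with a carefully chosen c.e.\ selector'' to convert a reduction $A\leqc A_{/(0,1)}$ into a non-surjective self-reduction of $A$, but this conversion requires you to computably decide whether $f(x)$ lands in the merged class $[0]_A\cup[1]_A$, and in the construction of Theorem~\ref{continuum many smcovers} (with $R=\Id$) that class need not be computable. You are also implicitly assuming $E_i\oplus E_j$ is self-full; that is known for ceers, but the $E_i$ furnished by Theorem~\ref{continuum many smcovers} are generally not ceers, so this is an unproved claim. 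The paper gets the incomparability for free by pushing forward from the dark side, where Lemma~\ref{dark not reduces to proper quotient} applies.

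Second, and more seriously, your proposal silently assumes that the immune-column construction controls which light minimal degrees lie below $\mathbf{c}$: ``the light minimal degrees below the degree $\mathbf{c}$ of $C$ are exactly those of $\{E_i\}$.'' The entire dark-side argument for the analogous claim (Claim~\ref{No Rando Below C}) rests on Lemma~\ref{dark minimal implies r.i.}: dark minimal ers have pairwise computably inseparable classes, which forces any reduction of a dark minimal er into $C$ to land in a single column. Light minimal ers flatly fail this: the $E_i$ from Theorem~\ref{continuum many smcovers} with $R=\Id$ satisfy $E_i\restriction\evens=\Id$, so they have infinitely many computable singleton classes, and a reduction into $C$ may scatter across columns. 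The paper acknowledges this explicitly (``We do not claim that there are no other light minimal degrees bounded by $\biota(\mathbf{c})$'') and compensates with Lemma~\ref{coding sets in light}, a genuinely new construction using $1$-generic sequences of permutations, which is used both to encode subsets and to cut the graph $H_{\biota(\mathbf{c})}$ down to the intended substructure. Your final paragraph, which asserts that subset coding ``proceeds exactly as in the $\bDark$ proof,'' misses that this step required a new idea for exactly the same reason.
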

\begin{proof}
Rather than directly defining light covering pairs of light minimal degrees (as we did in Lemma \ref{covering pairs exist}), we inherit them from the dark case through the following map: let $\iota$ be the map from $\Dark\cup \F$ to $\Light$ given by $\iota (D)=D\oplus \Id$, and $\biota$ the induced map on degrees. The next two claims give two crucial properties of $\iota$.

	\begin{claim}\label{iota is hom}
		$\iota$ gives a homomorphism of $\Dark\cup \F$ into $\Light$ whose image is an initial segment.
	\end{claim}
	\begin{proof}
		It is immediate that $D\leqc E$ implies $\iota(D)\leqc \iota(E)$. Now, suppose $\Id\leqc X\leqc \iota(D)=D\oplus \Id$, for some equivalence relation $X$. From Lemma~\ref{lem:below a join}, it follows $X\equivc D_0\oplus A$, where $D_0\leqc D$ and $A\leqc \Id$. Since $D_0$ is dark or finite, $A$ must be light, since $\Id\leqc X$. So, $A\equivc \Id$. Thus, $X\equivc D_0\oplus \Id=\iota(D_0)$.
	\end{proof}
	
	%FROM URI: I think this is not used below.
%	\begin{claim}\label{iota squishes I}
%		If $\iota(D)\leqc \iota(E)$, then $D\leqc E\oplus \Id_k$ for some $k\in \omega$.
%	\end{claim}
%	\begin{proof}
%		$\iota(D)\leqc \iota(E)$ obviously implies that $D\leqc E\oplus \Id$. Any such reduction can hit only finitely many odd numbers as otherwise $D$ would be light. This gives a reduction to $E\oplus \Id_k$, where $k$ is the size of the range of the reduction in the odd numbers.
%	\end{proof}

\begin{claim}\label{iota of dark minimal ceers are light minimal}
	If $D$ is a dark minimal ceer, then $\iota(D)$ is of light minimal degree.
\end{claim}
\begin{proof}
	Suppose $\Id<_c X\leqc \iota(D)$. Then by the proof of Claim \ref{iota is hom}, $X\equivc \iota(E)$ for some $E\leq D$. But $D$ is a ceer, so $E$ cannot be in $\F$ as that would make $E\in \I$ and $X\equivc \Id$. So, $E\in \Dark$, and thus $E\equivc D$ by dark minimality of $D$.
\end{proof}

Lemma \ref{build C} guarantees that any graph $G$ is encodable into $\bDark$ via some $G_{\mathbf{c}}$. The next lemma says that we can use $\biota$ to transfer our coding of graphs into $\bDark$ into an encoding in $\bLight$.

	\begin{lem}\label{$G_c$ embeds into light}
		For any countable graph $G$, there is a degree $\mathbf{c}\in \Dark$ so that $G_{\mathbf c}\cong G$ is isomorphic to a substructure of $H_{\biota(\mathbf{c})}$
	\end{lem}
	\begin{proof}
		Fix dark minimal ceers $D_i\in \mathbf{d}_i$ and let $\mathbf{c}$ be as constructed in Lemma \ref{build C} so $G_{\mathbf c}\cong G$. 	
%		We first claim that $\iota(\mathbf{c})$ bounds no member of $\F\smallsetminus\I$.
%		
%		\begin{claim}\label{c bounds not finite}
%			 The degree $\iota(\mathbf{c})$ bounds no member of $\F\smallsetminus \I$.
%		\end{claim}
%		\begin{proof}
%			Let $F\in \F$ be reducible to $C\oplus \Id$ via $g$. We want to show $F\equiv \Id_k$, for some $k$. Each column of $C\oplus \Id$ (i.e. column of $C$ or $\Id$ itself) is a ceer, and the range of $F$ can intersect only finitely many columns. Therefore, $F$ reduces to a finite uniform join of ceers, showing that $F$ is a ceer. But every finite ceer is in $\I$.
%%			
%%			For each $i\in U$, let $F_i$ be the finite equivalence relation induced by the pre-images of the $i$th column in $C$. Then $F$ is reducible to $\oplus_{i\in U} F_i \oplus \Id_1$ (where $\Id_1$ is needed to account for the columns not in $S$). Having chosen the $D_i$'s as ceers, each column of $C$ is a ceer, so each $F_i$ is a finite ceer. Thus, $F$ is equivalent to some $\Id_k$.
%		\end{proof}
		Lemma \ref{iota of dark minimal ceers are light minimal} shows that every $\biota(\mathbf{d}_i)$ is in $H_{\biota(\mathbf{c})}$. Let $X$ be the subset of vertices in $H_{\biota(\mathbf{c})}$ comprised of $\biota(\mathbf{d}_i)$ for $i\in \omega$. We do not claim that there are no other light minimal degrees bounded by $\biota(\mathbf{c})$. We now show that $\biota$ gives an isomorphism of $G_{\mathbf{c}}$  with the substructure of $H_{\biota(\mathbf{c})}$ with universe $X$.	
		
By Claim \ref{iota is hom}, $\biota$ gives a homomorphism of the degrees below $\mathbf{c}$ onto the light degrees below $\biota(\mathbf{c})$. We argue that such a homomorphism, when  restricted to the dark minimal degrees and their covering pairs, is in fact an embedding. 

First observe that each distinct pair of dark minimal $D_i$ and $D_j$ below $\mathbf{c}$ are sent via $\iota$ to incomparable degrees. Indeed, if $\iota(D_i)\leqc \iota(D_j)$, then $D_i\leqc D_j\oplus \Id$. By the computable inseparability of the classes of $D_i$, the reduction is either to $D_j$ or $\Id_k$, both of which are impossible.
		
Now, for distinct $D_i,D_j$, observe that $\iota(D_i\oplus D_j)$ and $\iota(D_i\oplus {D_j}_{/(0,1)})$ are sent to incomparable degrees. To see this, recall that, by Lemma \ref{covering pairs exist}, $D_i\oplus D_j$ and $D_i\oplus {D_j}_{/(0,1)}$ are incomparable. Since neither of these have a computable class (because this would contradict the computable inseparability of the equivalence classes of $D_i$ and $D_j$, granted by Lemma \ref{dark minimal implies r.i.}), it follows that neither can reduce to the other $\oplus \Id$, as such a reduction could not make any use of $\Id$.

\begin{claim}	
If $\mathbf{d}_i \rel{G_{\mathbf{c}}} \mathbf{d_j}$, then $\biota(\mathbf{d}_i) \rel{H_{\biota(\mathbf{c})}} \biota(\mathbf{d}_j)$.
\end{claim}	
\begin{proof}	
Let $\mathbf{d}_i \rel{G_{\mathbf{c}}} \mathbf{d}_j$. To show that $\iota(\mathbf{d}_i) \rel{H_{\iota(\mathbf{c})}} \iota(\mathbf{d}_j)$ holds, we need to check that $\iota(D_i\oplus D_j)$ and $\iota(D_i\oplus {D_j}_{/(0,1)})$ form a light covering pair of $\iota(D_i)$ and $\iota(D_j)$. It only remains to check that there is no $Y\leqc \iota(D_i\oplus D_j)$, $\iota(D_i\oplus {D_j}_{/(0,1)})$ such that $\iota(D_i),\iota(D_j)\leqc Y$. Suppose that such a $Y$ existed. Consider the composite reduction $f_i\colon  D_i\leqc Y\leqc D_i\oplus D_j\oplus \Id$. The computable inseparability of the classes of $D_i$ and the incomparability of $D_i$ and $D_j$ force  $f_i$ to go into the first column. Similarly, the reduction of $f_j\colon  D_j\leqc Y\leqc D_i\oplus D_j\oplus \Id$ must go into the second column. It follows that $D_i\oplus D_j\leqc Y$. Since $Y\leqc \iota(D_i\oplus {D_j}_{/(0,1)})$, there is a reduction $D_i\oplus D_j\leqc D_i\oplus {D_j}_{/(0,1)} \oplus \Id$, and thus $\iota(D_i\oplus D_j)\leqc \iota(D_i\oplus {D_j}_{/(0,1)})$, but we have already established that these are incomparable. 
%But, since $D_i$ and $D_j$ have no computable classes, such a reduction would give that $D_i\oplus D_j\leqc D_i\oplus {D_j}_{/(0,1)}$, contradicting Lemma \ref{dark not reduces to proper quotient}. So, no such $Y$ can exist. 
%		
%		
%		We conclude that $\mathbf{d}_i\rel{G_{\mathbf{c}}} \mathbf{d}_j$ implies $\iota(\mathbf{d}_i) \rel{H_{\iota(\mathbf{c})}} \iota(\mathbf{d}_j)$.
\end{proof}

	\begin{claim}	
If $\biota(\mathbf{d}_i) \rel{H_{\biota(\mathbf{c})}} \biota(\mathbf{d_j})$, then $\mathbf{d}_i \rel{G_{\mathbf{c}}} \mathbf{d}_j$.
\end{claim}
\begin{proof}
		Let $\biota(\mathbf{d}_i) \rel{H_{\biota(\mathbf{c})}} \biota(\mathbf{d}_j)$, and let $\iota(A_0)\in \mathbf{a}, \iota(B_0)\in \mathbf{b}$ be a light covering pair of $\biota(\mathbf{d}_i), \biota(\mathbf{d}_j)$. By the computable inseparability of the classes of $D_i$ and $D_j$, $D_i,D_j\leqc A_0$ and $D_i,D_j\leqc B_0$. Since $\biota$ is a homomorphism onto the light degrees below $\biota(\mathbf{c})$, any $\mathbf{y}$ witnessing that $\mathbf{a},\mathbf{b}$ is not a covering pair of $\mathbf{d}_i,\mathbf{d}_j$ would be so that $\biota(\mathbf{y})$ witnesses $\mathbf{a},\mathbf{b}$ are not a light covering pair of $\biota(\mathbf{d}_i)$ and $\biota(\mathbf{d}_j)$. Thus we have $\mathbf{d}_i \rel{G_{\mathbf{c}}} \mathbf{d}_j$.
\end{proof}
This concludes the proof of Lemma \ref{$G_c$ embeds into light}
	\end{proof}

Next, we show that we can code any subset of any countable set of vertices. This will be used both for encoding the second order part of graphs and also for selecting the substructure of $H_{\biota(\mathbf{c})}$ which is isomorphic to $G$.
	
	\begin{lem}\label{coding sets in light}
		Let $\{\mathbf{b}_i : i\in \omega\}$ be a collection of distinct light minimal degrees and $S\subseteq \omega$. Then, there is a degree $\mathbf{c}$ so that $\mathbf{b}_i\leq \mathbf{c}$ if and only if $i\in S$.
	\end{lem}
	\begin{proof}
		Fix a sequence of representatives $L_i\in \mathbf{b}_i$.
		Intuitively, we construct $X\in \mathbf{c}$ to encode each $L_i$ with $i\in S$ on the columns of $\omega$ and then generically collapse equivalence classes between columns.  Enumerate $S=\{a_0<a_1<\ldots \}$.
		
		First we define $X_0$ by
		\[
\langle n, i \rangle \rel{X_0} \langle m, j \rangle \Leftrightarrow i=j \wedge (n \rel{L_{a_i}} m).
\]

Let $\col_i=\set{\langle x, i\rangle : x\in \omega}$, i.e. the $i$th column of $\omega$. 
For all $i$, denote by $T_i\subseteq \col_i$ a transversal of $X_0$ which hits all  classes contained in the $i$th column.   Next, let $(f_i)_{i\in \omega}$ be a (mutually) $1$-generic sequence of permutations of $\omega$ over a Turing degree which computes every $L_i$.

Then let $X={X_0}_{/Z}$ with
\[
Z = \{(T_u[v], T_0[f_u(v)]) : u,v\in \omega\},
\]
 where we let $T_u[v]$ denote the $v$th element of $T_u$ (i.e., $Z$ collapses the $v$th class in the $u$th column to the $f_u(v)$th class in the $0$th column of $X_0$).

\begin{claim}
For all $i\in S$, $L_i\leqc X$.
\end{claim}
\begin{proof}
This follows from the fact that $X_0$ encodes each $L_i$ for $i\in S$ as a column, and the quotient $X$ does not collapse equivalence classes from the same column.
\end{proof}

Suppose towards a contradiction that $g\colon L_j\leq X$ for some $j\notin S$.
\begin{claim}\label{no transversal on two columns}
	There is some $k$ so that $\ran(g)\subseteq^* \col_k$.
%Let $B$ be a $W$-c.e.\ transversal of $X$. Then $W\subseteq^* \col_j$ for some $j$.
%
%If $W$ computes a transversal  $B$ of $X$, then  there cannot be  two distinct transversals $T_i$ and $T_j$ so that $|B\cap T_i|=\infty$ and $|B\cap T_j|=\infty$.
\end{claim}

\begin{proof}
	Let $V$ be the set of finite sequences of finite injective partial maps $(p_i)_{i\leq m}$ so that for some $x,y$, letting $i,l,n,m$ be such that $g(x)\rel{X_0} T_i[n]$ and $g(y)\rel{X_0} T_l[m]$, we have $p_i(n)=p_l(m)\leftrightarrow x\rel{\cancel{L_j}} y$. Observe that if $\ran(g)$ is not almost contained in a single column, then $V$ is dense (i.e., for any finite sequence of finite injective partial maps $(p_i)_{i\leq m}$ there is a sequence $(q_i)_{i\leq n}$ with $n\geq m$ of injective partial maps so $p_i\subseteq q_i$ for $i\leq m$, and $(q_i)_{i\leq n}\in V$). But then by genericity of $(f_i)_{i\in \omega}$, it will meet $V$, which contradicts $g$ being a reduction of $L_j$ to $X$.
%	
%	
%Suppose otherwise and consider the following set of strings, which is c.e.\ in $W$,
%\[
%I:=\{\sigma\in\omega^{<\omega} : (\exists n,m)(\sigma(T_{i}[n])=T_j[m])) \}.
%\]
%As every $\rho\subseteq f$ extends into $I$, it must be the case that there is $\tau \in I$ such that $\tau\subseteq f$. This means that $f$ will eventually collapse a class from $i$th column with a class from the $j$th colum, so that $B$ cannot intersect infinitely many times both $T_i$ and $T_j$. 
\end{proof}

Let $i$ be fixed so that $\ran(g)\subseteq^* \col_i$. Since $\ran(g)$ intersects only finitely many columns, we can assume that it intersects the minimal possible number of columns. If $\ran(g)\subseteq \col_i$, then $L_j\leqc L_i$, which is a contradiction to $L_j$ and $L_i$ being inequivalent light minimal \ers. So, suppose that $\ran(g)$ intersects $\col_k$ for $k\neq i$. Let us consider the finite \er $Y=L_j\restriction g^{-1}(\col_k)$. If all $Y$-classes were computable, then we could adjust $g$ to send each of these sets to a representative of the same class in $\col_i$ contradicting that $g$ uses the minimal possible number of columns. So $Y\in \F\smallsetminus\I$ and $Y\leq L_j$ and $Y\leq L_k$. But Theorem~\ref{Id joins with finites} shows that there is a least upper bound $Z$ of $\Id$ and $Y$. Then $\Id<_c Z\leq L_j,L_k$ contradicting that $L_j$ and $L_k$ are inequivalent light minimal \ers.
%
%
%
%So, let $g$ be almost entirely contained in the $i$th column and let $k$ be another column hit by $g$. Note that $\range(g)\cap\col_k$ must contain at least 2 classes, as otherwise we could replace the use of the $k$th column by sending all $x$ so that $g(x)\cap \col_k$ to some fixed element in  $\col_i$ instead. Similarly, $\range(g)\cap\col_k$ cannot be equivalent to any $\Id_n$, as then we could again replace $g$ by a reduction using one less column. Let $A$ be $g^{-1}(\col_k)$. Therefore, $A$ is a member of $\F\smallsetminus \I$. But then $A\oplus \Id$ is the least upper bound of $A$ and $\Id$, by Lemma . But this contradicts that $L_j$ and $L_k$ are inequivalent and light minimal, since $A\leqc L_i, L_k$.
	\end{proof}

	If $\mathbf{a}$ is light, then let $M_{\mathbf{a}}$ be the set of light minimal degrees below $\mathbf{a}$.
	It follows that for every second order countable graph $G$, there are parameters $\mathbf{e},\mathbf{b}$ so that $(G,P(G))\cong (H_{\mathbf{e}}\cap M_{\mathbf{b}}, \mathcal{A})$ where $\mathcal{A}$ is the collection of sets $H_{\mathbf{e}}\cap M_{\mathbf{b}}\cap M_{\mathbf{a}}$ for various light degrees $\mathbf{a}$.

%	It follows that every countable graph $G$ can be encoded into $\Light$ as follows:
%\begin{enumerate}
%\item first, take a degree $\mathbf{c}$ which bounds no member of $\mathcal{F}\smallsetminus \mathcal{I}$ and such that $G_\mathbf{c}\cong G$ (Lemma \ref{c bounds not finite} guarantees that such $\mathbf{c}$ exists);
%\item then, use Lemma \ref{coding sets in light} to specify the domain of a substructure $X$ of $H_{\iota(c)}$ such that $G_{\mathbf{c}}\cong X$ (Lemma 	\ref{$G_c$ embeds into light} guaratees that such $X$ exists).
%\end{enumerate}	

%Finally, observe that Lemma \ref{coding sets in light} allows quantification over arbitrary subsets of vertices. Hence, any question about countable graphs can be effectively translated into $\Th(\Light)$. 

As remarked in Section \ref{coding graphs}, this suffices to conclude that the theory of $\bLight$ is computably isomorphic to second-order arithmetic.
\end{proof}

\bibliographystyle{plain}

\bibliography{references}

\end{document}